\newtheorem{theorem}{Theorem}
\newtheorem{definition}{Definition}
\newtheorem{lemma}{Lemma}
\def\P{{\mathbb P}}     
\def\E{{\mathbb E}}     
\def\RR{\mathbb{R}}
\def\ZZ{\mathbb{Z}}
\newcommand{\tv}{\mathrm{TV}}
\newcommand{\ind}{\mathbf{1}}
\newcommand{\bSigma}{\boldsymbol{\Sigma}}
\definecolor{r}{rgb}{1,0,0}
\definecolor{b}{rgb}{0,0,0} 
\definecolor{b2}{rgb}{0,0,0} 
\definecolor{b3}{rgb}{0,0,0} 
\title{
{\color{b}
An impossibility result for phylogeny reconstruction from $k$-mer counts
}
}
\author{
Wai-Tong Louis Fan\footnote{
Department of Mathematics, Indiana University, Bloomington.} 
\footnote{Center of Mathematical Sciences and Applications, Harvard University, Cambridge.
}
\and
Brandon Legried\footnote{
Department of Statistics, University of Michigan, Ann Arbor.} 
\and 
Sebastien Roch\footnote{
Department of Mathematics, University of Wisconsin--Madison.}
}
\date{
\today
}
\begin{document}

\maketitle

\begin{abstract}
We consider phylogeny estimation under a two-state model of sequence evolution by site substitution on a tree. {\color{b}In the asymptotic regime where the sequence lengths tend to infinity, we show that for any fixed $k$ no statistically consistent phylogeny estimation is possible from $k$-mer counts over the full leaf sequences alone. Formally, we establish that the joint distribution of $k$-mer counts over the entire leaf sequences on two distinct trees have total variation distance bounded away from $1$ as the
sequence length tends to infinity. Our impossibility result implies that statistical consistency requires more sophisticated use of $k$-mer count information, such as block techniques developed in previous theoretical work.} 
\end{abstract}

\section{Introduction}

Molecular sequence comparisons 
are fundamental to many bioinformatics
methods~\cite{gusfield_algorithms_1997,durbin_biological_1998,compeau_bioinformatics_2018}. In particular, the probabilistic analysis
of sequences and their statistics 
has provided valuable insights, for instance, 
in comparative genomics~\cite{karlin_methods_1990,barbour_compound_2001,lippert_distributional_2002,reinert_alignment-free_2009}, {\color{b}
population genetics~\cite{tavare_line--descent_1984,price_principal_2006,patterson_population_2006,baik_phase_2005},} 
and phylogenetics~\cite{steel_recovering_1994,erdos_few_1999,evans_broadcasting_2000,mossel_phase_2004,roch_phase_2017}.
In this paper, we consider alignment-free phylogeny reconstruction~\cite{vinga_alignment-free_2003,haubold2014alignment}.

Alignment-free approaches are an important class of methods for estimating evolutionary trees that bypass the computationally hard multiple sequence alignment problem 
({\color{b2} depicted in Figure \ref{fig:msa}})
and avoid the need for a reference genome. Typically,
these methods construct 
pairwise distances between sequences based on match lengths \cite{ulitsky2006average, haubold2015andi} or 
$k$-mer counts \cite{qi2004whole, haubold2014alignment,fan2015assembly}. Here a $k$-mer refers to a consecutive substring of length $k$ in an input sequence ({\color{b2} see Figure \ref{fig:kmerexample} for an illustration}).
The pairwise distance matrix obtained is then used to reconstruct the phylogenetic relationships among the sequences. A variety of standard
distance-based phylogenetic methods
can be used for this purpose~\cite{Warnow:17,Steel:16}.
Numerous popular pipelines are available that implement these alignment-free approaches~\cite{haubold2015andi,ondov2016mash,lees2018evaluation,lees2019fast},
although they do not offer rigorous guarantees of accurate reconstruction.
\begin{figure}
    \centering
    \includegraphics[scale=0.2]{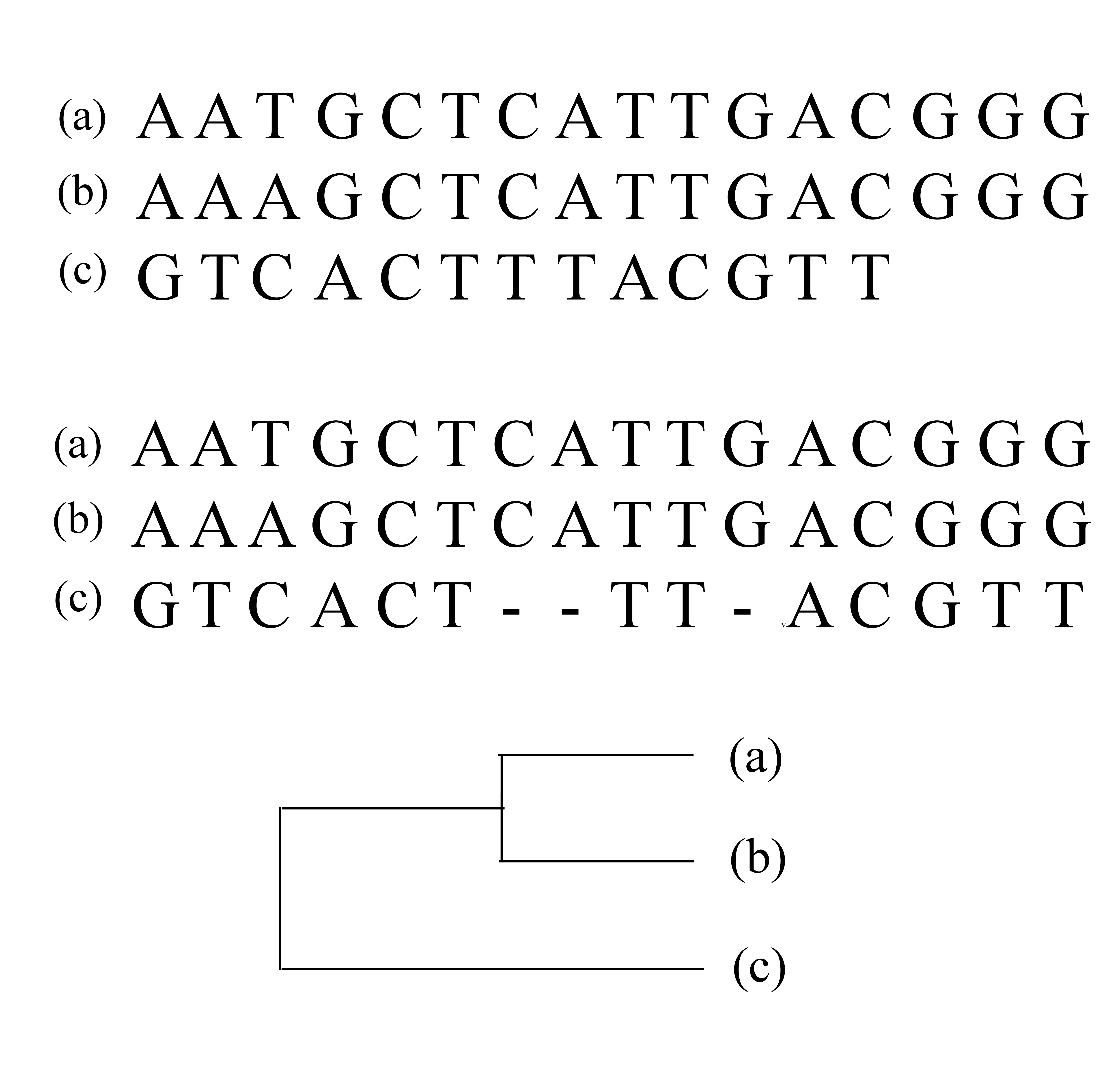}
    \caption{{\color{b3} Standard steps in phylogeny estimation.
    Top:  DNA sequences obtained from species (a), (b), and (c). While inherited from a common ancestor, the sequences and their lengths differ because of past mutations (including insertions and deletions). Middle:  A multiple sequence alignment of the sequences, where gaps are inserted to align the columns as best as possible.  Each column indicates inferred common ancestry (homology).  Bottom:  A rooted phylogenetic tree depicting the estimated evolutionary history of the sequences, with (a) and (b) being more closely related.}}
    \label{fig:msa}
\end{figure}
\begin{figure}
    \centering
    \includegraphics[scale=0.2]{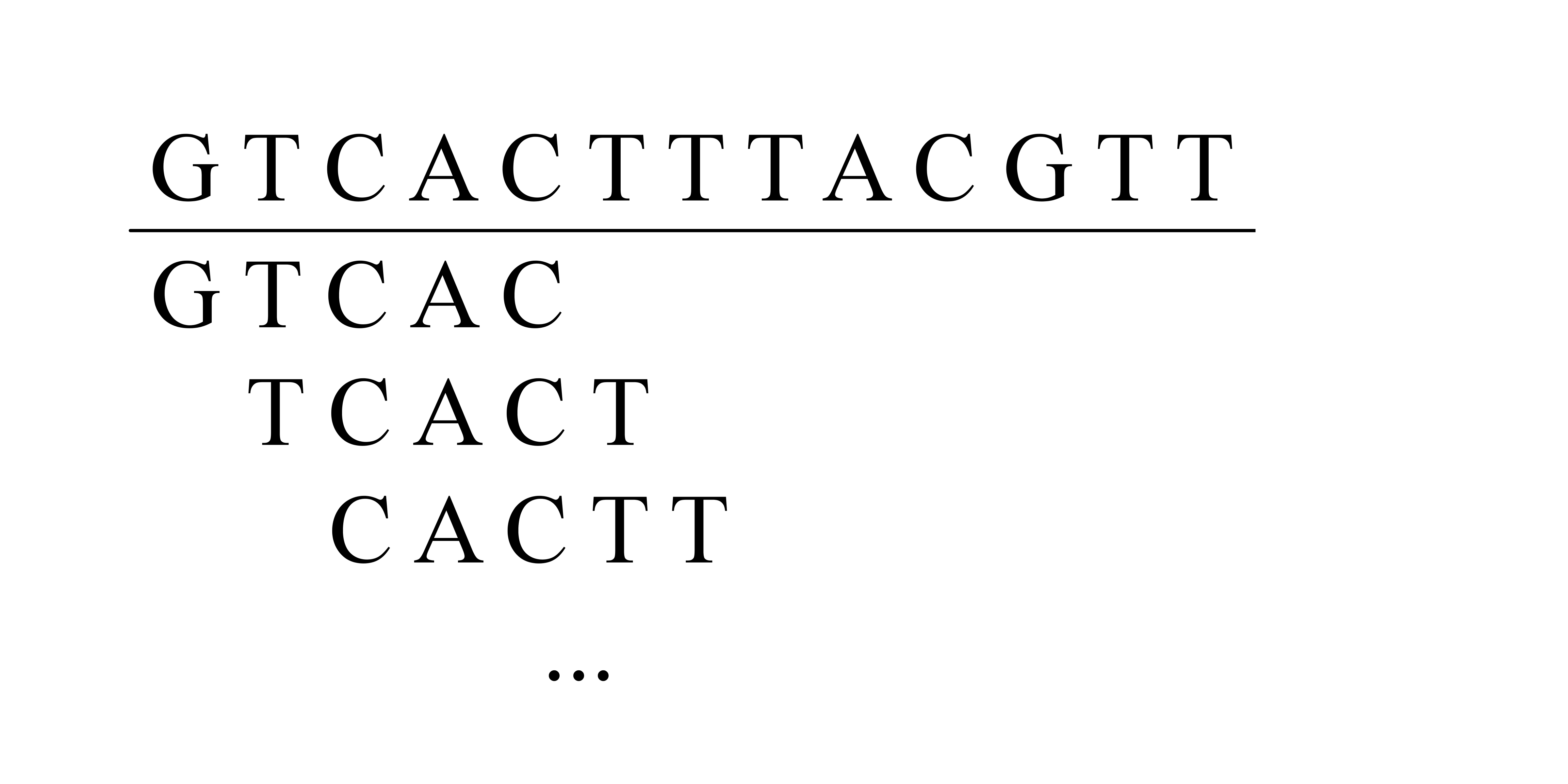}
    \caption{
    {\color{b3} For a given sequence, the $k$-mer counts are obtained by reading words of length $k$ starting from each site and then counting how many times each possible length-$k$ word appears.  Here $k = 5$.}
    }
    \label{fig:kmerexample}
\end{figure}

In this paper, we consider the problem of phylogeny estimation under a two-state symmetric model of sequence evolution by site substitutions on a leaf-labeled tree. 
In the asymptotic regime where the sequence length tends to infinity, we show that:
\begin{quote}
\begin{center}
\emph{for any fixed $k$, no statistically consistent phylogeny estimation is possible from the $k$-mer counts of the {\color{b}entire} input sequences alone.} 
\end{center}
\end{quote}
{\color{b}Formally, we establish that the joint distribution of $k$-mer counts over the entire leaf sequences on two distinct trees have total
variation distance bounded away from 
$1$ as the sequence length tends to infinity. 
Put differently, these two joint distributions 
have a non-vanishing overlap in that asymptotic regime.
Our results are information-theoretic:
since the reconstruction probability of any method is only as good as the worst total variation distance (see~\cite[Lemma 3.2]{fan2018}), our main claim (Theorem \ref{T:main}) implies
an impossibility result
for reconstruction methods using
only $k$-mer counts across the entire sequences at the leaves. On the other hand, our results have no implications for reconstruction methods using $k$-mer counts in more elaborate ways, e.g., through block decomposition. We come back to prior approaches of this type below in ``Related work.''}

To bound the total variation distance between the two distributions on well-chosen trees, 
our proof 
takes advantage of a multivariate local central limit theorem, an approach which is complicated by the
probabilistic and {\color{b} linear} dependencies
of $k$-mers. 

\paragraph{Related work}
A related impossibility result was established in our previous work \cite{fan2020impossibility}, where it was shown that no consistent distance estimation is possible from \emph{sequence lengths alone} under the TKF91 model~\cite{Thorne1991}, a more complex model of sequence evolution
which also allows for insertions and
deletions (indels). On the other hand, 
sequence lengths are significantly
simpler to analyze than $k$-mers
and are not used in practice to 
infer phylogenies. Moreover the
results in~\cite{fan2020impossibility}
only apply to distance-based 
phylogeny estimation methods, while
our current results are more general.

In a separate line of work, 
a computationally efficient 
algorithm for alignment-free phylogenetic reconstruction 
was developed and analyzed in 
\cite{daskalakis2013alignment}.
Rigorous sequence length 
guarantees for high-probability
reconstruction under an indel model
related to the TKF91 model
were established. 
While this method is based on $1$-mers, it first divides up the
input sequences into blocks
of an appropriately chosen length
and then compares the $1$-mer counts
on each block across sequences. {\color{b2} A block in the ancestral sequence gives rise to blocks in the descendant sequences that have 
the same 
position, 
so the comparison does not require an alignment technique.}
The weak correlation between the blocks
allows the use of concentration
inequalities on a notion of pairwise
distance proposed in \cite{daskalakis2013alignment}. {\color{b}In 
particular, this 
reconstruction method uses more information
than $1$-mer counts over the entire sequences  (that is, it uses $1$-mer counts \textit{over each separate block}), so that
our results do not apply to it (in the limit of zero indel rate). However 
no practical implementation {\color{b2}of this $1$-mer-based approach} is available.
In \cite{daskalakis2013alignment}, 
accurate phylogenetic reconstruction with high probability is shown to be achievable when
the sequence length is of polynomial order in the number of leaves $n$. In a constrained regime of parameters, significantly improved bounds on the sequence length requirement 
were obtained 
 in~\cite{ganesh_optimal_2019}, who used techniques related to those of \cite{daskalakis2013alignment} and also
exploited a well-studied connection to ancestral sequence
reconstruction.
In other related work, it was proved in~\cite{Allman2015StatisticallyCK}
that the tree topology as well as
mutation parameters can be identified from pairwise joint $k$-mer count distributions under more general
substitution-only models of sequence evolution using an appropriately
defined notion of distance. Block techniques can then be used to derive statistical consistency results. 
See also~\cite{durden_identifiability_2019} for extensions to coalescent-based
models. We emphasize that the results in \cite{daskalakis2013alignment, Allman2015StatisticallyCK, ganesh_optimal_2019, durden_identifiability_2019} do not contradict our main claim (Theorem \ref{T:main}), which excludes block decomposition.
}



Alignment-free sequence comparisons
based on $k$-mer counts were also
studied for independent sequences
with i.i.d.~sites or under certain hidden Markov models of sequences~\cite{lippert_distributional_2002,reinert_alignment-free_2009,wan_alignment-free_2010,barbour_compound_2001}.
Because they assume independent sequences, such results are not directly relevant to phylogeny reconstruction.



\paragraph{Organization} 
The paper is organized as follows. 
In Section~\ref{S:Result}, we state our main results after providing the necessary background and definitions. We also sketch the main steps of the proof.
The details of the proof can be found 
in Section~\ref{S:proof}. A few auxiliary results are in the appendix.

\section{Definitions and main result}\label{S:Result}

In this section, we state our result formally, after introducing the relevant concepts.

\paragraph{$k$-mers.} Let $k$ be a positive integer, fixed throughout.
First, we define $k$-mers and introduce their frequencies in a binary sequence, which will serve as  our main statistic.
\begin{definition}\label{Def:kmer}
A \textbf{$k$-mer} is a {\color{b}binary string} of length $k$, i.e., $y\in \{0,1\}^{k}$.
For a binary sequence $\sigma=(\sigma_i)_{i=1}^m$ of length $m$, we let
$f_{\sigma}(y) \in \ZZ_+$ be  the number of times $y$ appears in $\sigma$ as a consecutive substring,
{\color{b}where $\ZZ_+$ is the set of non-negative integers. That is,}
\[
f_{\sigma}(y)
=
\sum_{i=0}^{m-k}\ind\{(\sigma_{i+1},\ldots, \sigma_{i+k})= y\}.
\]
The \textbf{frequency vector (or count vector) of $k$-mers in $\sigma$} is the vector 
$$f_{\sigma}=(f_{\sigma}(y))_{y\in \{0,1\}^{k}}\in \ZZ_+^{2^k}.$$ 
The coordinates of $f_{\sigma}$ are ordered 
 such  that the $j$-th coordinate is the frequency of the $k$-mer that is the {\color{b} base-2 numeral representation of $j-1$.}
\end{definition}

For example,
when $k=1$, the count vector of $1$-mers of a binary sequence is $(a,b)$ where $a$ is the number of zeros and $b$ is the number of ones. 
Hence,  the count vector of $1$-mers of $00111000$ is $(5,3)$.
When $k=2$, there are $2^k=4$ binary strings, namely 
$\{(00), (01),(10),(11)\}$. So the count vector of $2$-mers of the sequence $001111000$ is $(3,1,1,3)$ since $(00)$ appears 3 times, $(01)$ appears one time, etc. 
By convention, the count vector of $k$-mers for any binary sequence with length less than $k$ is equal to $(0,\cdots,0)\in \ZZ^{2^k}_{+}$.


\paragraph{Probabilistic model of sequence evolution.}
We consider a symmetric substitution model on phylogenies, also known as the {\color{b} Cavendar-Ferris-Neyman (CFN) model~\cite{farris_probability_1973,cavender_taxonomy_1978,Neyman:71},} for binary sequences of fixed length $m$. The CFN model on a single edge of a {\color{b}metric tree} is a continuous-time Markov process with state space $\{0,1\}^m$ such that (i) the $m$ digits are independent and (ii) each of the $m$ digits follows a continuous-time  Markov process with two states $\{0,1\}$ that switches state at rate {\color{b}$1$}. 

We are interested in this process on a rooted {\color{b}metric tree $T$}, i.e., indexed by all points {\color{b2} along the edges of} $T$.  
{\color{b} We view an edge of length $\ell$ as the interval $[0,\ell]$ for the continuous-time substitution process.}
The root vertex $\rho$ is assigned a state $\mathcal{X}_{\rho} \in \{0,1\}^m$, drawn from the uniform distribution on $\{0,1\}^m$.  This state then evolves down the tree (away from the root) according to the following recursive process.  Moving away from the root, along each edge $e = (u,v)$ starting at $u$, we run the CFN process for a time $\ell_{(u,v)}$ with initial state $\mathcal{X}_{u}$,  described in the previous paragraph. Such processes along different edges starting at $u$ are conditionally independent, given $\mathcal{X}_{u}$.
Denote by $\mathcal{X}_{t}$ the resulting state at $t \in e$.  
Then the full process,  denoted by $\{\mathcal{X}_{t}\}_{t \in T}$, is called the \textbf{CFN model on tree $T$}.  
In particular, the set of leaf states is $\mathcal{X}_{\partial T} = \{\mathcal{X}_{v}:v \in \partial T\}$.
It is clear that, under this process, the $m$ digits remain independent.
For more background on the CFN model, see e.g.~\cite{Steel:16}.  

\paragraph{An impossibility result.} Our main result is the following. Recall that the total variation distance between two probability measures
$\nu_1$ and $\nu_2$ on a countable space $E$ is defined by
\begin{equation}\label{Def_TV}
    \|\nu_1 - \nu_2\|_\tv
=\sup_{{\color{b} E'} \subseteq E} \left|
\nu_1({\color{b} E'} ) -\nu_2({\color{b} E'} ) \right|.
\end{equation}

\begin{theorem}\label{T:main}
Fix $k\in\mathbb{N}$.
For any $n \geq 3$, there exists distinct trees $T_1 \neq T_2$ with $n$
leaves such that
\begin{equation}\label{main}
{\color{b}\limsup_{m \to\infty}}\,
\|\mathcal{L}^{(1)}_m - \mathcal{L}^{(2)}_m\|_\tv \,<\, 1,
\end{equation}
where 
$\mathcal{L}^{(i)}_m$ is the law of the $k$-mer frequencies of the leaf sequences of length $m$ under the CFN model on tree $T_i$. Furthermore, the trees $\{T_1,\,T_2\}$ can be chosen to be independent of $k$.
\end{theorem}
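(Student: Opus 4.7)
The plan is to establish a multivariate local central limit theorem (LCLT) for the joint count vector $F^{(i)}:=(f_v)_{v\in\partial T_i}$ at the leaves under each model, and then reduce the desired TV bound to a bound between two multivariate Gaussians sharing a common mean. Since the CF process is started from the uniform stationary distribution, every site at every leaf is marginally Uniform$\{0,1\}$, so $\E[f_v(y)]=(m-k+1)2^{-k}$ for every leaf $v$, every $k$-mer $y$, and \emph{every} tree. Consequently $\mathcal{L}_m^{(1)}$ and $\mathcal{L}_m^{(2)}$ share the same mean vector and differ only through their covariance structure.

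Next I compute the covariance. Since sites evolve independently across positions, two $k$-mer indicators whose starting positions differ by at least $k$ (at any pair of leaves) are independent, so only $O(m)$ pairs of overlapping starting positions contribute: $\mathrm{Cov}(f_u(y),f_v(y'))=\sum_{|d|<k}(m-k+1-|d|)\,c^{u,v}_{d,y,y'}$, where $c^{u,v}_{d,y,y'}$ depends only on the joint state distribution at $2k-|d|$ aligned sites across the pair $(u,v)$, and hence on the tree only through the $u$-to-$v$ path. Thus the covariance matrix has the form $m\,\bSigma^{(i)}+O(1)$. The counts live on an affine sublattice determined by the exact identity $\sum_y f_v(y)=m-k+1$ and, up to $O(1)$ boundary corrections, by the ``flow'' relations $\sum_b f_v(zb)\approx\sum_b f_v(bz)$ for each $(k{-}1)$-mer $z$; this sublattice structure is the same on both trees, and on its unconstrained directions $\bSigma^{(i)}$ is positive definite whenever all edge lengths are strictly positive and finite.

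The main technical step, and the principal obstacle, is a multivariate LCLT of the form
\begin{equation*}
\sup_{F}\bigl|\P(F^{(i)}=F)-\widetilde g_m^{(i)}(F)\bigr|=o(m^{-D/2}),
\end{equation*}
where $\widetilde g_m^{(i)}$ is the Gaussian density $\mathcal{N}(\mu_m,m\bSigma^{(i)})$ evaluated on the effective lattice of dimension $D$. The difficulty is twofold. First, overlapping $k$-mers within a single sequence induce short-range dependence; I would decouple this by partitioning each sequence into long blocks separated by length-$(k{-}1)$ buffers so that block contributions become independent across blocks, and then apply a Bhattacharya--Ranga Rao style characteristic-function estimate to the resulting block sums. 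Second, the site vectors across leaves are correlated through the tree, so the Fourier analysis must be carried out in variables dual to the effective sublattice, and verifying strong nonlatticeness in all unconstrained directions is exactly where the strict positive-definiteness of $\bSigma^{(i)}$ enters. Boundary contributions from the buffers and from the $O(1)$ flow fluctuations are absorbed into tail bounds on block $k$-mer counts.

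Granted the LCLT, the triangle inequality yields $\|\mathcal{L}_m^{(1)}-\mathcal{L}_m^{(2)}\|_\tv\le\|\widetilde G_m^{(1)}-\widetilde G_m^{(2)}\|_\tv+o(1)$, where $\widetilde G_m^{(i)}$ denotes the lattice-discretized Gaussian. Since both Gaussians share the mean $\mu_m$ and have covariances $m\bSigma^{(i)}$, their Kullback--Leibler divergence equals $\tfrac12\bigl(\mathrm{tr}((\bSigma^{(2)})^{-1}\bSigma^{(1)})-D+\log\det(\bSigma^{(2)}(\bSigma^{(1)})^{-1})\bigr)$, which is \emph{independent of $m$}; the sharp bound $\mathrm{TV}\le\sqrt{1-e^{-\mathrm{KL}}}$ then produces a uniform bound strictly less than $1$ for all sufficiently large $m$. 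Choosing $T_1\ne T_2$ with generic strictly positive finite edge lengths makes $\bSigma^{(i)}$ positive definite on the effective sublattice simultaneously for every $k$, so the pair $\{T_1,T_2\}$ can be selected independently of $k$. The finitely many remaining small values of $m$ are handled individually by noting that each $\mathcal{L}_m^{(i)}$ places positive mass on, e.g., the all-zero leaf configuration; taking the maximum with the asymptotic bound then yields \eqref{main}.
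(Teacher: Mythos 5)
Your proposal shares the paper's two central insights --- that the laws on $T_1$ and $T_2$ have identical means (the paper's Lemma~\ref{lem:expec}, your marginal uniformity observation), and that a multivariate local CLT yields Gaussian approximations whose overlap bounds TV away from $1$ --- but the technical route you take is genuinely different. The paper first collapses the problem from $n$ leaves to three vertices $\{A,B',C'\}$ via the Markov property (Lemma~\ref{lem:red-1}), then replaces overlapping $k$-mer counts with counts of \emph{non-overlapping} $k$-mer transitions (Lemmas~\ref{lem:new-space} and~\ref{lem:red-2}), and finally obtains exactly i.i.d.\ summands by cutting at Doeblin regeneration times of the associated triple-of-$k$-mers Markov chain, so the Davis--McDonald lattice LCLT applies directly and the bound $\mathrm{TV}<1$ comes from summing the pointwise minimum over a shared Gaussian bulk. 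You instead work with the full $n$-leaf count vector, propose a block-with-buffer decoupling and Bhattacharyya--Ranga Rao Fourier estimate, and close via the Bretagnolle--Huber bound $\mathrm{TV}\le\sqrt{1-e^{-\mathrm{KL}}}$ after checking the KL between the two scaled Gaussians is $m$-independent. Your final step (KL-based) and small-$m$ patch are correct and pleasantly clean, and trading the three-vertex reduction for the marginal-uniformity observation is a nice simplification of the ``means agree'' step.

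Where your sketch has real gaps is precisely where the paper does its hardest work. First, non-degeneracy: you assert that $\bSigma^{(i)}$ is positive definite on the effective sublattice ``for generic edge lengths,'' but this is the crux, not a side remark. The overlapping $k$-mer counts at a single leaf satisfy exact linear identities (the sum and, up to $O(1)$ boundary terms, the flow relations), so the raw covariance is singular; identifying the maximal non-degenerate sublattice and proving the restricted covariance is strictly positive definite requires an explicit construction. The paper proves this via Lemma~\ref{lem:davis} by exhibiting, for every coordinate direction in the reduced space, an explicit pair of $k$-mer-triple cycles differing by exactly one unit --- there is no shortcut to a combinatorial argument of this kind, and ``generic edge lengths'' does not by itself supply it. Second, the lattice issue is worse than ``absorbed into tail bounds'': the flow relations have $O(1)$ random corrections determined by the first and last $k$-mer of each leaf sequence, so the support of the count vector is not a fixed affine sublattice but a finite union of translates that depend on boundary data. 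An LCLT must condition on this boundary data (the paper conditions on $\widetilde{\mathcal{E}}$ and also carries the endpoint $k$-mers inside $Z'_V$). Without this conditioning your discretized Gaussian $\widetilde G_m^{(i)}$ is not well-defined on a common lattice, and the Fourier analysis dual to ``the'' sublattice has no clean meaning. Third, the block-buffer decoupling only renders \emph{interior} block counts independent; the counts spanning a buffer are coupled to both neighbors, and for a \emph{local} (pointwise) CLT --- unlike a CLT in distribution --- such residuals cannot simply be pushed into tail estimates without a separate argument that they do not ruin the lattice concentration. The paper's non-overlapping-transition plus Doeblin-excursion reformulation bypasses all three of these difficulties simultaneously, which is why it is taken. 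Your route is plausible, but each of these three points is a nontrivial piece of work that the sketch currently leaves open.
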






From \eqref{Def_TV}, we see that \eqref{main} implies the following: using only the $k$-mer frequencies over the {\color{b}entire} leaf sequences for a fixed $k\geq 1$, there is no statistical test that can distinguish between $T_1$ and $T_2$ with {\color{b} success} probability going to 1 as the sequence length tends to $+\infty$. More precisely, by  \eqref{main} and the reconstruction upper bound in part 1 of {\color{b} \cite[Lemma 3.2]{fan2018},} there exists $\epsilon>0$ such that the probability that a tree estimator gives the correct estimate is at most $1-\epsilon$, uniformly for all estimators and all integers $m\geq k$. {\color{b} We point out again that our results do not apply to block decomposition methods.}


\medskip

\noindent
{\bf Proof sketch. }
Since our goal is to prove a negative result, we get to pick the trees. We consider two trees $\{T_1,\,T_2\}$ that 
have the same set of $n$ leaves and are the same except for the {\color{b}placement of a single edge leading to} leaf $A$. These trees are depicted in {\color{b}Figure \ref{fig:reduction}} and described in detail in Section \ref{S:Trees} below. 
The topologies of $\{T_1,T_2\}$ differ only on the subtree containing three leaves $\{A,B,C\}$ that have the same distance from the root.

We seek to distinguish the law of the $k$-mer frequencies of the $n$ leaf sequences between the two trees. This will be done in two steps, in Sections \ref{S:Reduction} and \ref{S:LocalCLT} respectively, and concluded in  Section \ref{S:Final}.
\begin{enumerate}
    \item \textit{Step 1 (Reductions):} By using the Markov property of the CFN process on trees, we first reduce the problem from $n$ leaf sequences to only  $3$ sequences (Lemma \ref{lem:red-1}).  We can assume the sequence length $m$ is a multiple of $k$ (Lemma \ref{L:reduc_hatf}). Then $k$-mer frequencies are  functions of {\color{b}pairs of adjacent, non-overlapping $k$-mers}, together with the first and the last $k$-mers (Lemma \ref{lem:new-space}). {\color{b} For short, we refer to these pairs as ``adjacent $k$-mer pairs'' and a precise definition is in \eqref{Def:NonOverlap} below.}  We can further reduce the problem to distinguishing the {\color{b} laws} of {\color{b}adjacent} $k$-mer pairs (Lemma \ref{lem:red-2}). The {\color{b} collection of adjacent} $k$-mer pairs satisfy certain linear relations (Lemma \ref{lem:flowconstraints}), which lead to  redundancy that we need to address (Lemma \ref{lem:redundant}). Summarizing, the problem is reduced to distinguishing  the {\color{b} laws} of  non-redundant, {\color{b}adjacent} $k$-mer pairs on three points $\{A,\,B',\,C'\}$ as depicted in Figure \ref{fig:three} (Lemma \ref{lem:red-3}).
    
    \item \textit{Step 2 (Applying a local CLT):} We apply a local central limit theorem for i.i.d.~vectors to the law of non-redundant, {\color{b}adjacent} $k$-mer pairs as $m\to\infty$ (Lemmas~\ref{L:minfinity} and \ref{L:minfinity2} and Theorem \ref{T:DMCLT}). 
    Non-redundancy guarantees the non-degeneracy of the limit distribution (Lemmas~\ref{lem:pd}, \ref{lem:davis}, \ref{lem:var}  and \ref{lem:expec}).
    The two limit normal distributions, under the two trees respectively, have an overlap (Lemmas \ref{lem:local-clt} and \ref{L:sizeY}) and therefore {\color{b}so do} the laws of non-redundant, {\color{b}adjacent} $k$-mer pairs.
\end{enumerate}

Section \ref{S:Final} concludes the proof of  Theorem \ref{T:main}.

\section{Proof}
\label{S:proof}

In this section, we give the details of the proof. Some standard results are stated in the appendix.

\subsection{The two trees}\label{S:Trees}
We {\color{b}consider two rooted metric trees  $\{T_1,\,T_2\}$ as follows.}
\begin{enumerate}
    \item  $T_1$ and $T_2$ have the same set of $n\geq 3$ leaves $\{A,B,C,X^{4},...,X^{n}\}$.

    \item The subtree of $T_1$ restricted to the $n-1$ leaves $\{B,C,X^{4},...,X^{n}\}$ is the same as that of $T_2$. {\color{b}Here the restriction of a metric tree $T$ to a subset $L$ of leaves is the metric tree obtained from $T$ by keeping only those points lying on a path between two leaves in $L$.}

    \item The subtrees of $T_1$ and $T_2$ below the most recent common ancestor (MRCA) of $\{A, B, C\}$ contain none of $\{X^{4},...,X^{n}\}$.
    
    \item  Leaves $\{A,B,C\}$ satisfy, for $i\in\{1,2\}$,
    \begin{equation}\label{topo_ABC} 
    \begin{split}
        \mathrm{dist}_{T_i}(\rho,C)=&\, \mathrm{dist}_{T_i}(\rho,B) \quad\text{and}\\
            \mathrm{dist}_{T_1}(A,C)=&\, \mathrm{dist}_{T_2}(A,B) < \mathrm{dist}_{T_i}(B,C),
    \end{split}
    \end{equation} 
    where 
    {\color{b}
    $\mathrm{dist}_{T_i}(x,y)$ denotes the sum of edge lengths along the path from $x$ to $y$ in the tree $T_i$.
    }
\end{enumerate}

These trees are depicted in {\color{b}Figure \ref{fig:reduction}}, 
where
$X = (X^{4},...,X^{n})$ refers to the set of all leaves other than $\{A,B,C\}$.  In \textit{Newick tree format} (see, e.g., \cite{Warnow:17}), the topology of $T_1$ restricted to $\{A,B,C\}$ is $((A,C),B)$, while the topology of $T_2$ restricted to $\{A,B,C\}$ is $((A,B),C)$.
Clearly, $\{T_1,T_2\}$ does not depend on $k$, and their topologies differ only on the subtree containing three leaves $\{A,B,C\}$. The topology of the trees restricted to $X$ is arbitrary and plays no role in the argument.


\medskip

\noindent
{\bf Notation. }
For $i\in\{1,2\}$, we let $\P^{(i)}=\P^{(i),m}$ be the probability measure of the CFN model on $T_i$  with sequence length $m$ (recall that the root state is drawn from the uniform distribution on $\{0,1\}^m$),
and $\mathbb{P}^{(i)}_{\Theta}$ be the law of a random variable $\Theta$ under $\P^{(i)}$.
For a binary sequence $\sigma_Z=\sigma(Z)$ at a point $Z\in T_i$ where $i\in\{1,2\}$, we let $f_Z:=f_{\sigma(Z)}\in \ZZ_+^{2^k}$ be the $k$-mer count vector in $\sigma(Z)$ (see Definition \ref{Def:kmer}). For a finite ordered set of points $U=(u_j)$ on the tree $T_i$, we let $f_U=(f_{u_j}) \in \ZZ_+^{2^k\times |U|}$. With this notation, in Theorem \ref{T:main}, $\mathcal{L}^{(i)}_m=\P^{(i)}_{f_{X},f_{A},f_{B},f_{C}}$. We also write $a\wedge b=\min\{a,b\}$ and $a\vee b=\max\{a,b\}$.

\subsection{Reductions}\label{S:Reduction}

Our argument proceeds through a series of reductions.

\subsubsection{Reduction to three vertices}

First we shall reduce the complexity of the problem from $n$ to three vertices. For this we define two internal vertices $B'$ and $C'$ on both $T_1$ and $T_2$ as follows.  
Let $C'$ be the  MRCA of $A$ and $C$ on $T_1$, and label $C'$ as well {\color{b} the point
on the path between $C$ and $B$ on $T_2$ such that $\mathrm{dist}_{T_1}(C,C')=\mathrm{dist}_{T_2}(C,C')$.}
Similarly, we let 
$B'$ be the MRCA of $A$ and $B$ on $T_2$, and label $B'$ as well {\color{b} the point
on the path between $A$ and $B$ on $T_1$ such that $\mathrm{dist}_{T_1}(B,B')=\mathrm{dist}_{T_2}(B,B')$.
} This setup is depicted in Figure \ref{fig:reduction}.



{\color{b}
Our first reduction lemma asserts that we can reduce the problem 
to one of distinguishing between the 
two three-vertex trees depicted in Figure~\ref{fig:three}.
}

\begin{figure}
    \centering
    \includegraphics[width=5in]{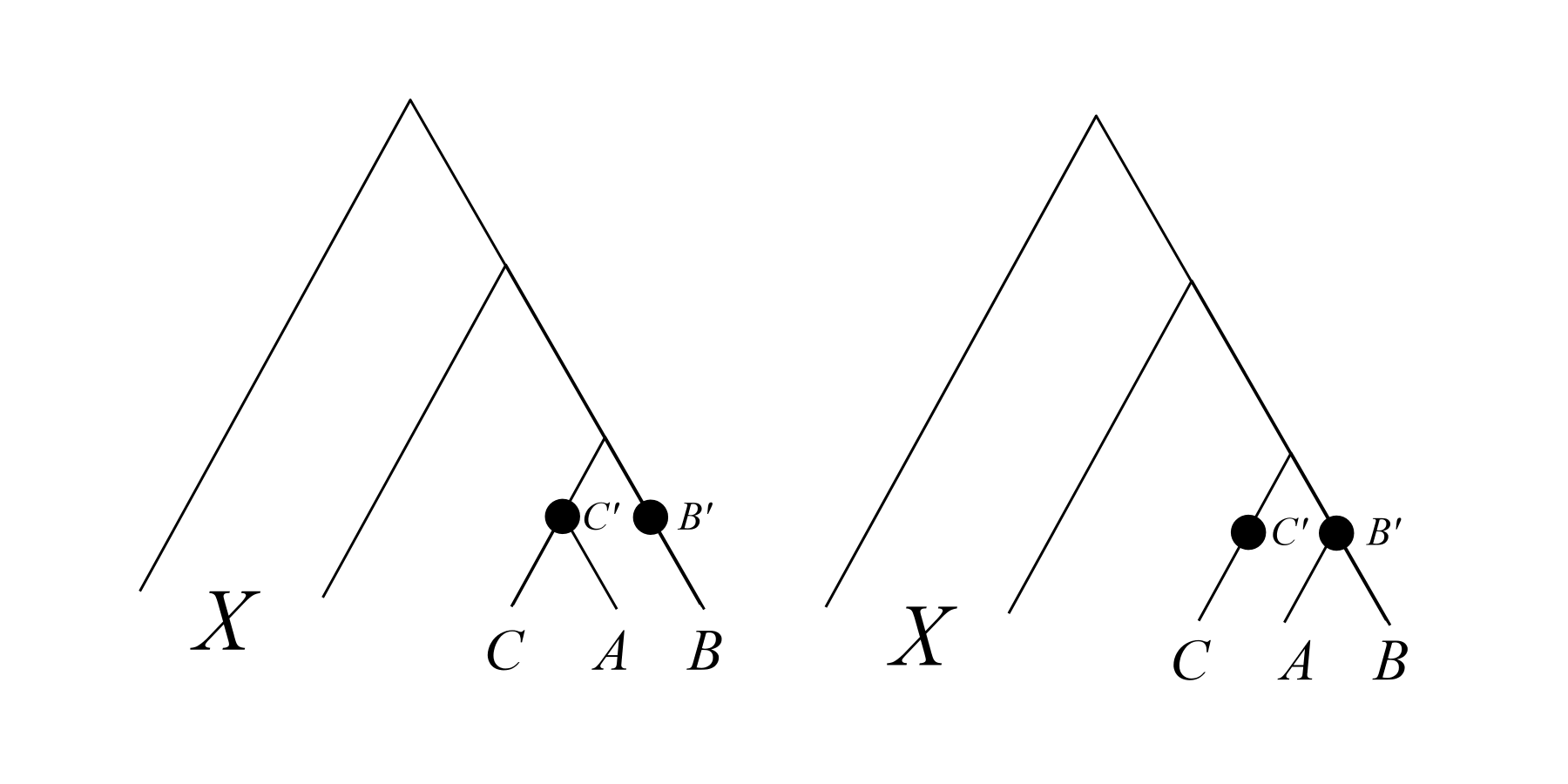}
    \caption{The trees $T_1$ (left) and $T_2$ (right) on $n$ leaves with points $C'$ and $B'$ added. {\color{b}Here $X$ refers to the remaining $n-3$ leaves.} 
    } 
    \label{fig:reduction}
\end{figure}
\begin{lemma}[Reduction to $3$ vertices]
\label{lem:red-1}
Let $T_1$ and $T_2$ be the trees with points $C'$ and $B'$ as described above.  Then for all $m\in\mathbb{N}$,
\begin{align*}
\left\|\mathcal{L}^{(1)}_m - \mathcal{L}^{(2)}_m\right\|_\tv
\leq 
\left\|\P^{(1)}_{f_{A},f_{B'},f_{C'}} - \P^{(2)}_{f_{A},f_{B'},f_{C'}}\right\|_\tv.
\end{align*}
\end{lemma}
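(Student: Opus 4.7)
The plan is to use the Markov property of the CF model on trees together with the data processing inequality (DPI). Two structural facts drive the proof. First, the trees $T_1$ and $T_2$ are identical once the leaf $A$ and its incident edge are removed; consequently, the joint law of the sequences at every labeled point other than $A$ (in particular at $X, B, C, B', C'$) is the same under $\P^{(1)}$ and $\P^{(2)}$. Second, by the construction of $B'$ and $C'$, the pair $\{B', C'\}$ separates $A$ from $\{X, B, C\}$ in both trees, so by the tree Markov property $\sigma_A$ is conditionally independent of $(\sigma_X, \sigma_B, \sigma_C)$ given $(\sigma_{B'}, \sigma_{C'})$ under each $\P^{(i)}$.

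Using these facts I would write the joint law at all six points as
$$
\P^{(i)}(\sigma_X, \sigma_A, \sigma_B, \sigma_C, \sigma_{B'}, \sigma_{C'}) = \P^{\ast}(\sigma_{B'}, \sigma_{C'})\, K(\sigma_{B'}, \sigma_{C'}; \sigma_X, \sigma_B, \sigma_C)\, \kappa^{(i)}(\sigma_A \mid \sigma_{B'}, \sigma_{C'}),
$$
in which only the final factor depends on $i$. Because of the conditional independence, applying the common kernel $K$ and then the $k$-mer count map $\phi$ coordinate-wise sends $(\sigma_A, \sigma_{B'}, \sigma_{C'})$ to $(f_X, f_A, f_B, f_C)$ through a single tree-independent Markov kernel; DPI then yields the intermediate bound $\|\mathcal{L}^{(1)}_m - \mathcal{L}^{(2)}_m\|_\tv \leq \|\P^{(1)}_{\sigma_A, \sigma_{B'}, \sigma_{C'}} - \P^{(2)}_{\sigma_A, \sigma_{B'}, \sigma_{C'}}\|_\tv$.

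The final step is to upgrade this to the $f$-version stated in the lemma, which is the main obstacle since a naive DPI that applies $\phi$ to $\sigma_{B'}, \sigma_{C'}$ would go in the wrong direction. My approach would be to exploit the common $\P^{\ast}$-marginal of $(\sigma_{B'}, \sigma_{C'})$: since that marginal is tree-independent, so is the conditional law of $(\sigma_{B'}, \sigma_{C'})$ given $(f_{B'}, f_{C'})$, and one can pre-compose the kernel of the previous step with this conditional sampling to obtain a single tree-independent Markov kernel $\mathcal{K}$ from $(f_A, f_{B'}, f_{C'})$ to the leaf counts $(f_X, f_A, f_B, f_C)$. A final application of DPI would then close the argument, provided one verifies the compatibility $\mathcal{K}\cdot \P^{(i)}_{f_A, f_{B'}, f_{C'}} = \mathcal{L}^{(i)}_m$. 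That compatibility is the delicate point of the proof; it essentially asks for a form of the $\sigma$-level conditional independence to descend to the $f$-level through the many-to-one map $\phi$, and I would expect the verification to rely on the i.i.d.~product structure of the CF model across digits together with the tree-independent factorization above.
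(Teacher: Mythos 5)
Your approach matches the paper's in both its structure and substance, and you have correctly isolated the single nontrivial step. The paper's proof first uses the data-processing inequality (Lemma~\ref{lem:tv-f}) to include $(f_{B'},f_{C'})$ among the variables, and then invokes Lemma~\ref{lem:tv-markov} with the Markov chain $f_{X,C,B}\to f_{B',C'}\to f_A$, together with the tree-independence of $\P^{(i)}_{f_{X,C,B}}$ and of $\P^{(i)}_{f_{B',C'}\mid f_{X,C,B}}$. Your kernel-and-DPI reformulation gives the same bound, and the compatibility $\mathcal{K}\cdot\P^{(i)}_{f_A,f_{B'},f_{C'}}=\mathcal{L}^{(i)}_m$ that you defer is exactly equivalent to the Markov property $f_A\perp(f_X,f_B,f_C)\mid(f_{B'},f_{C'})$ that the paper asserts in a single sentence.

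You are right to treat this as the delicate point, and you should be warier than your closing sentence suggests about it following easily from the i.i.d.~digit structure. The tree Markov property gives the $\sigma$-level statement $\sigma_A\perp(\sigma_X,\sigma_B,\sigma_C)\mid(\sigma_{B'},\sigma_{C'})$, but coarsening the conditioning variable from $(\sigma_{B'},\sigma_{C'})$ to $(f_{B'},f_{C'})$ is exactly the kind of operation that destroys conditional independence in general. Indeed, for $k\ge 2$ the conditional law of $f_A$ given $\sigma_{B'}$ depends on $\sigma_{B'}$ through more than $f_{B'}$: with $k=2$ and $m=4$, the sequences $0110$ and $1011$ share the $2$-mer count vector $(0,1,1,1)$, yet applying an i.i.d.~site-flip with probability $q\neq\tfrac12$ gives different conditional laws of $f_A$ (the probabilities of $f_A=(0,1,1,1)$ differ by $q^2(1-q)(2q-1)$). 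So $\P^{(i)}(f_A\mid\sigma_{B'},\sigma_{C'})$ is non-constant on the fibers of $(f_{B'},f_{C'})$, and since $(f_X,f_B,f_C)$ also carries information about $(\sigma_{B'},\sigma_{C'})$ beyond their $k$-mer counts, the compatibility you need is not automatic. Your sketch, like the paper's own one-line justification, leaves this verification open; closing it requires a genuine argument or a modified reduction that keeps $(\sigma_{B'},\sigma_{C'})$ rather than $(f_{B'},f_{C'})$ as the conditioning variable.
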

\begin{proof} First, $(f_{X},f_{A},f_{B},f_{C})$ is of course a function of $(f_{X},f_{A},f_{B},f_{C},f_{B'},f_{C'})$, so Lemma~\ref{lem:tv-f} in the appendix implies
\begin{align*}
    \left\|\mathcal{L}^{(1)}_m - \mathcal{L}^{(2)}_m\right\|_{\tv} &= \left\|\P^{(1)}_{f_{X},f_{A},f_{B},f_{C}} - \P^{(2)}_{f_{X},f_{A},f_{B},f_{C}}\right\|_{\tv} \\
    &\leq \left\|\P^{(1)}_{f_{X},f_{A},f_{B},f_{C},f_{B'},f_{C'}} - \P^{(2)}_{f_{X},f_{A},f_{B},f_{C},f_{B'},f_{C'}}\right\|_{\tv}.
\end{align*} 
Also $f_{X,C,B} \to f_{B'C'} \to f_A$ forms a Markov chain under both $\P^{(1)}$ and $\P^{(2)}$, satisfying all the conditions of Lemma~\ref{lem:tv-markov} in the appendix.  Hence \begin{align*}
    \left\|\P^{(1)}_{f_{X},f_{A},f_{B},f_{C},f_{B'},f_{C'}} - \P^{(2)}_{f_{X},f_{A},f_{B},f_{C},f_{B'},f_{C'}}\right\|_{\tv} = \left\|\P^{(1)}_{f_{A},f_{B'},f_{C'}} - \P^{(2)}_{f_{A},f_{B'},f_{C'}}\right\|_{\tv},
\end{align*} giving the result.
\end{proof}

\begin{figure}
    \centering
    \includegraphics[width=4in]{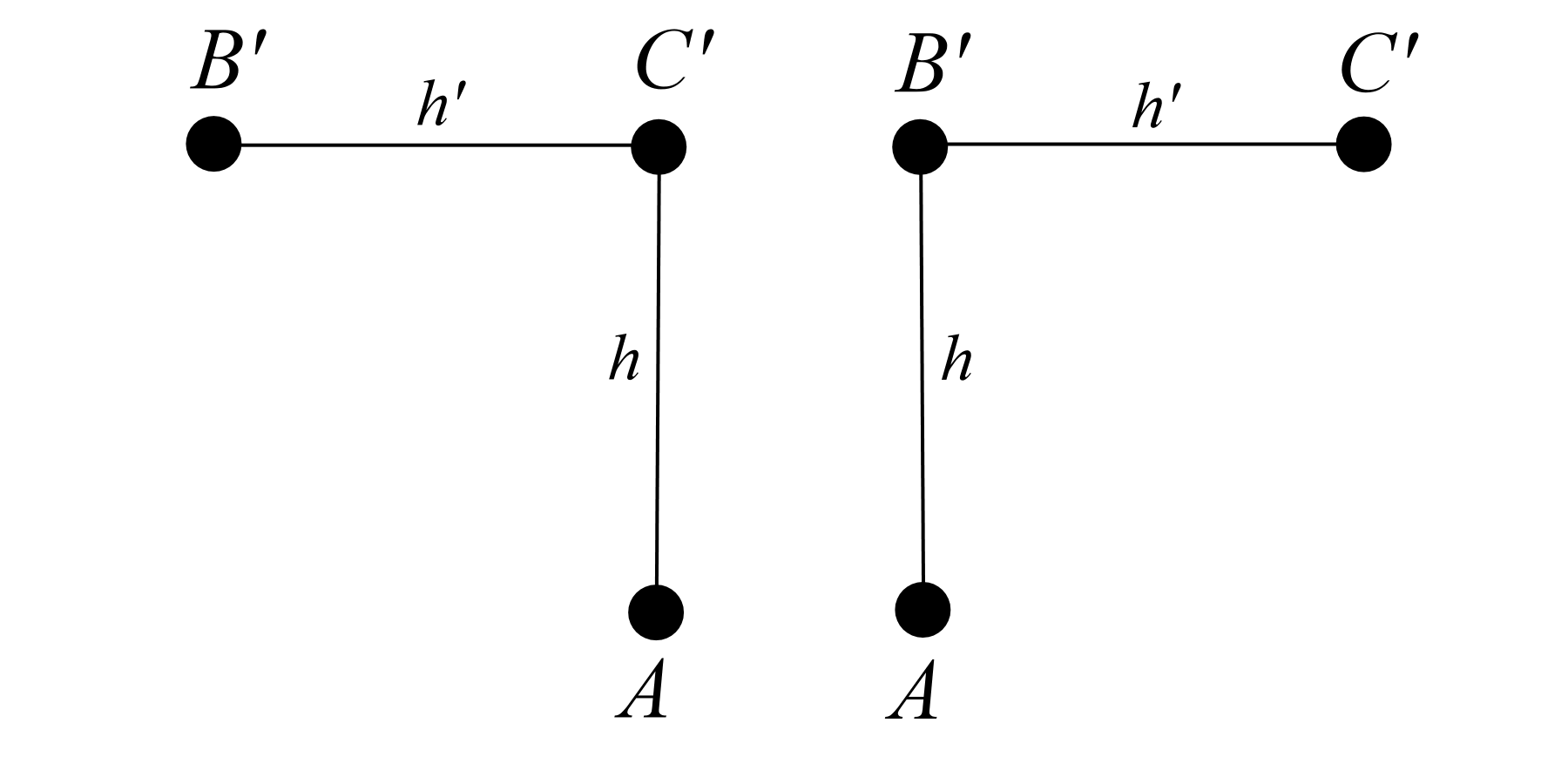}
    \caption{The three-vertex configurations for the measures {\color{b}$\P^{(1)}_{f_{A},f_{B'},f_{C'}}$ (left) and $\P^{(2)}_{f_{A},f_{B'},f_{C'}}$ (right) respectively. {\color{b2} In this figure, $h'=\mathrm{dist}_{T_1}(B',C') = \mathrm{dist}_{T_2}(B',C')$ and $h=\mathrm{dist}_{T_1}(A,C')=\mathrm{dist}_{T_2}(A,B')$.}}
    }
    \label{fig:three}
\end{figure}


\subsubsection{Reduction to {\color{b}transitions between adjacent, non-overlapping $k$-mers}}

Due to the following lemma, we can assume $m=(\mu+1)k$ for some $\mu\in\mathbb{N}$.

\begin{lemma}[Reduction to multiples of $k$]\label{L:reduc_hatf}
If $\bar{\mu}k<m<(\bar{\mu}+1)k$ where $\bar{\mu}\in\mathbb{N}$, then
\begin{align*}
\left\|\P^{(1),m}_{f_A,f_{B'},f_{C'}} - \P^{(2),m}_{f_{A},f_{B'},f_{C'}}\right\|_\tv
\leq 
\left\|\P^{(1),\,(\bar{\mu}+1)k}_{\widehat{f}_A,\widehat{f}_{B'},\widehat{f}_{C'}} - \P^{(2),\,(\bar{\mu}+1)k}_{\widehat{f}_A,\widehat{f}_{B'},\widehat{f}_{C'}}\right\|_\tv,
\end{align*}
where $\widehat{f}_V=(f_V,\,\sigma_V^{last})$ is the $k$-mer count vector together with the last $2k$-digits  $\sigma_V^{last}$ in $\sigma_V$.
\end{lemma}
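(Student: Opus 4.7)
The plan is to exhibit a deterministic map $\Phi$ such that $(f_A, f_{B'}, f_{C'})$ at sequence length $m$ equals $\Phi(\widehat{f}_A, \widehat{f}_{B'}, \widehat{f}_{C'})$ at sequence length $M := (\bar{\mu}+1)k$, and then invoke the data-processing inequality (Lemma~\ref{lem:tv-f}). The key observation is that, because the CF model has independent sites, the joint law of the length-$m$ leaf sequences is exactly the marginal of the length-$M$ leaf sequences on their first $m$ coordinates. So under $\P^{(i),M}$, the length-$m$ prefix statistics have law $\P^{(i),m}$, and all that remains is to check measurability.

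To recover $f_V$ (at length $m$) from $\widehat{f}_V$ (at length $M$), write $m = \bar{\mu}k + r$ with $0 < r < k$, so $M - m = k - r$. For a sequence of length $L \geq k$, the $k$-mers are indexed by start positions $1,\ldots,L-k+1$, so $f_V$ at length $m$ differs from $f_V$ at length $M$ precisely by the $k-r$ $k$-mers starting at positions $m-k+2, \ldots, M-k+1$. Each such $k$-mer is determined by the digits $\sigma_{m-k+2}, \ldots, \sigma_M$, a window of length $2k - r - 1 < 2k$ at the end of the sequence. Since $M \geq 2k$ whenever $\bar{\mu} \geq 1$, this window lies inside the last $2k$ digits $\sigma_V^{last}$, so the $k-r$ surplus $k$-mers can be read off and subtracted from $f_V$ (length $M$) in a deterministic way. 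The remaining case $\bar{\mu} = 0$ is trivial, since both $f_V$ at length $m < k$ and (by construction) the corresponding sides of the asserted inequality collapse to the zero vector by convention.

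Applying this reconstruction coordinatewise at $V \in \{A, B', C'\}$ gives the desired deterministic $\Phi$, and Lemma~\ref{lem:tv-f} then yields
\[
\|\P^{(1),m}_{f_A, f_{B'}, f_{C'}} - \P^{(2),m}_{f_A, f_{B'}, f_{C'}}\|_{\tv}
\le
\|\P^{(1),M}_{\widehat f_A, \widehat f_{B'}, \widehat f_{C'}} - \P^{(2),M}_{\widehat f_A, \widehat f_{B'}, \widehat f_{C'}}\|_{\tv}.
\]
The only place requiring care is the index bookkeeping that ensures the suffix window $[m-k+2,\,M]$ is contained in $[M-2k+1,\,M]$; this is the step I would write out explicitly, but it presents no real obstacle beyond checking the inequalities $0 < r < k$ and $\bar{\mu} \geq 1$.
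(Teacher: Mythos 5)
Your proof is correct and takes essentially the same route as the paper: first use independence of sites to identify $\P^{(i),m}_{f_A,f_{B'},f_{C'}}$ with the law of the prefix statistic $f^m_V$ under $\P^{(i),M}$, then observe that $f^m_V$ is a deterministic function of $(f_V,\sigma_V^{\mathrm{last}})$ and apply Lemma~\ref{lem:tv-f}. The only difference is that you spell out the index bookkeeping showing the surplus $k$-mers lie in the last $2k$ digits, which the paper states without elaboration.
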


\begin{proof}
Note that all digits are independent under the CFN model and
\begin{align*}
\left\|\P^{(1),m}_{f_A,f_{B'},f_{C'}} - \P^{(2),m}_{f_{A},f_{B'},f_{C'}}\right\|_\tv
=\left\|\P^{(1),\,(\bar{\mu}+1)k}_{f^m_A,f^m_{B'},f^m_{C'}} - \P^{(2),\,(\bar{\mu}+1)k}_{f^m_A,f^m_{B'},f^m_{C'}}\right\|_\tv,
\end{align*}
where $f^m_V$ is the $k$-mer count vector of the first $m$ digits of $\sigma_V$. 
The proof is complete by
Lemma \ref{lem:tv-f} since $f^m_V$ is a function of $f_V$ and the last $2k$-digits 
 when $\sigma_V$ has length $(\bar{\mu}+1)k$.
\end{proof}




For $\sigma\in\{0,1\}^m$ where $m=(\mu+1)k$, we
let $x_0^\sigma, \ldots, x_{\mu}^\sigma \in \{0,1\}^{k}$ be the  \textit{{\color{b}adjacent, non-overlapping}} $k$-mers in $\sigma$. That is, 
\begin{equation}\label{Def:NonOverlap}
\sigma=\underbrace{(\sigma_1,\ldots,\sigma_k)}_{x_0^\sigma}\,\underbrace{(\sigma_{k+1},\ldots,\sigma_{2k})}_{x_1^\sigma}\,\cdots \,\underbrace{(\sigma_{\mu k+1},\ldots,\sigma_{(\mu+1) k})}_{x_{\mu}^\sigma} \,\in \{0,1\}^{(\mu+1)k}.
\end{equation}
For $y, z \in \{0,1\}^{k}$, let $N_{y,z}^\sigma$ be the number of {\color{b}adjacent} $(y, z)$ pairs in this representation of $\sigma$: 
\begin{equation}\label{Def:N_yz}
N_{y,z}^\sigma 
=\sum_{j=0}^{\mu-1}\ind\{x_j^\sigma = y,\,x_{j+1}^\sigma=z\}.
\end{equation}
We call $N_{y,z}^\sigma$ the number of {\color{b}adjacent} transitions from $y$ to $z$.



The following lemma and its proof give an expression for $k$-mer frequencies in terms of the numbers of  {\color{b}adjacent} $k$-mer pairs as well as the ending $k$-mers.  




\begin{lemma}[$k$-mers as a function of {\color{b}adjacent} transitions]
\label{lem:new-space}
For any $\sigma \in \{0,1\}^{(\mu+1)k}$ and $\mu\in\mathbb{N}$,
the frequency vector $f_{\sigma}$ is a function of 
$$
\big(x_\mu^\sigma,  (N_{y,z}^\sigma)_{y,z \in \{0,1\}^{k}} \big).
$$
\end{lemma}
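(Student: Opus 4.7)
My plan is to make the informal observation preceding the lemma precise by writing $f_\sigma(w)$ as an explicit linear combination of the transition counts $N_{y,z}^\sigma$, plus a single correction term that depends only on the last non-overlapping $k$-mer $x_\mu^\sigma$.

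First, since $m = (\mu+1)k$, the valid starting positions for $k$-mers in $\sigma$ are $1, 2, \ldots, m - k + 1 = \mu k + 1$. I would partition these positions into $\mu$ ``transition blocks'' of size $k$, where block $j \in \{0, 1, \ldots, \mu - 1\}$ consists of the starting positions $jk + 1, jk + 2, \ldots, (j+1)k$, together with the isolated final position $\mu k + 1$, which is precisely the start of $x_\mu^\sigma$. This partition is exhaustive and disjoint by construction.

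Next, I would observe that for each $j \in \{0, \ldots, \mu - 1\}$ and each $i \in \{0, \ldots, k - 1\}$, the $k$-mer in $\sigma$ starting at position $jk + 1 + i$ lies entirely inside the $2k$-digit window $\sigma_{jk+1}, \ldots, \sigma_{(j+2)k}$, which carries exactly the content of the pair $(x_j^\sigma, x_{j+1}^\sigma)$. Writing $(x_j^\sigma, x_{j+1}^\sigma) = (y, z)$ with $y, z \in \{0,1\}^k$, this $k$-mer equals
\[
\phi_i(y, z) := (y_{i+1}, \ldots, y_k, z_1, \ldots, z_i) \in \{0,1\}^k,
\]
a deterministic function of $(y, z)$ alone. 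Combining the partition of starting positions with this identification gives, for every $w \in \{0,1\}^k$,
\[
f_\sigma(w) = \sum_{j=0}^{\mu-1} \sum_{i=0}^{k-1} \ind\{\phi_i(x_j^\sigma, x_{j+1}^\sigma) = w\} + \ind\{x_\mu^\sigma = w\}.
\]
Swapping the order of summation and grouping the transitions by their value $(y,z)$ yields
\[
f_\sigma(w) = \sum_{y,z \in \{0,1\}^k} c_{y,z}(w) \, N_{y,z}^\sigma + \ind\{x_\mu^\sigma = w\}, \qquad c_{y,z}(w) := \sum_{i=0}^{k-1} \ind\{\phi_i(y,z) = w\},
\]
which exhibits $f_\sigma$ as a deterministic function of $\bigl(x_\mu^\sigma, (N_{y,z}^\sigma)_{y,z}\bigr)$, as required.

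The argument is essentially bookkeeping, so there is no real analytic obstacle. The one point I would check carefully is that the partition of $k$-mer starting positions into the transition blocks plus the singleton $\{\mu k + 1\}$ is exhaustive and disjoint, so that no starting position is missed or double-counted; this boundary accounting is precisely what forces the appearance of the extra $\ind\{x_\mu^\sigma = w\}$ correction term.
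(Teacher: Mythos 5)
Your proof is correct and takes essentially the same approach as the paper: both partition the $\mu k + 1$ starting positions of $k$-mers so that each non-final $k$-mer is identified with a deterministic function of a consecutive pair $(x_j^\sigma, x_{j+1}^\sigma)$, with the lone term $\ind\{x_\mu^\sigma = w\}$ accounting for the final position. The only difference is organizational: the paper sums over offsets $a$ (residues mod $k$) first, introducing the sets $\Theta_a(w)$, whereas you sum over blocks $j$ first and then collapse to the coefficients $c_{y,z}(w)$; the underlying double sum and the functions $\phi_i(y,z)$ (whose level sets are precisely the $\Theta_a(w)$) are the same.
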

\begin{proof}
We split the set $\{0,1,\ldots, \mu k\}$ into the disjoint union $\left(\cup_{a=0}^{k-1}\Lambda_a\right) \cup \{\mu k\}$, where $\Lambda_a=\{a, k+a, 2k+a, \cdots, (\mu-1)k+a\}$ contains $\mu$ integers with remainder $a$ when divided by $k$. By definition, for  $w=(w_1,\ldots, w_k)\in \{0,1\}^{k}$,
\begin{align}
f_{\sigma}(w)
&= \sum_{i=0}^{\mu k}\ind\{(\sigma_{i+1},\ldots, \sigma_{i+k})= w\}\nonumber\\
&= \ind\{x_{\mu}^\sigma= w\} +\sum_{a=0}^{k-1}\sum_{i\in \Lambda_a}\ind\{(\sigma_{i+1},\ldots, \sigma_{i+k})= w\}.\label{fw}
\end{align}

For $a=0$, the set $\Lambda_0$ coincides with the multiples of $k$ from $0$ up to $\mu - 1$.  So
\begin{equation}\label{a0}
\sum_{i\in \Lambda_0} \ind\{(\sigma_{i+1},\ldots, \sigma_{i+k})= w\}=
\sum_{i=0}^{\mu-1}  \ind\{x_i^\sigma= w\}=\sum_{z\in \{0,1\}^{k}}N_{w,z}^\sigma.
\end{equation}
For $a\in \{1,\cdots ,k-1\}$, 
\begin{equation}\label{a ge 1}
\sum_{i\in \Lambda_a}\ind\{(\sigma_{i+1},\ldots, \sigma_{i+k})= w\}=\sum_{(y,z)\in  \Theta_a(w)}N_{y,z}^\sigma,
\end{equation}
where $\Theta_a(w)$ is the set of all pairs of the form
\begin{align*}
\big((\theta_0, \cdots ,\theta_{a-1}, \,w_1,\cdots,w_{k-a}),\,(w_{k-a+1},\cdots, w_k,\, \theta_a,\cdots,\theta_{k-1})\big) \in \{0,1\}^{2k},
\end{align*}
where $(\theta_0,\cdots,\theta_{k-1})$ is an arbitrary element in $\{0,1\}^{k}$. 

The result then follows when we put \eqref{a0} and \eqref{a ge 1} into \eqref{fw}, seeing that $f_{\sigma}(w)$ depends {\color{b}only} on the specified value of $\left(x_{\mu}^{\sigma},\left(N_{y,z}^{\sigma}\right)_{y,z \in \{0,1\}^{k}}\right)$.  This completes the proof of the lemma. \end{proof}

 
For points $V\in\{A, B',C'\}$ on the trees we let
\[Z_V = \Big( (x_0^{\sigma_V},x_1^{\sigma_V}),\, (x_{\mu-1}^{\sigma_V}, x_\mu^{\sigma_V}),\,  \big(N_{y,z}^{\sigma_V})_{y,z \in \{0,1\}^{k}} \Big),\] 
where $\sigma_V$ is the binary sequence at $V$. Note that we included 
$x_0^{\sigma_V}$, 
$x_1^{\sigma_V}$ and $x_{\mu-1}^{\sigma_V}$ here
for reasons that will become clear below.  
\begin{lemma}[Reduction to {\color{b}adjacent} transitions]
\label{lem:red-2}
Suppose $m=(\mu+1)k$ for some $\mu\in\mathbb{N}$. Then 
\begin{align*}
\left\|\P^{(1)}_{\widehat{f}_A,\widehat{f}_{B'},\widehat{f}_{C'}} - \P^{(2)}_{\widehat{f}_{A},\widehat{f}_{B'},\widehat{f}_{C'}}\right\|_\tv
\leq 
\left\|\P^{(1)}_{Z_A, Z_{B'}, Z_{C'}} - \P^{(2)}_{Z_A, Z_{B'}, Z_{C'}}\right\|_\tv.
\end{align*}
\end{lemma}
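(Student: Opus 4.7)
The plan is to observe that this is a direct consequence of the data-processing inequality for total variation distance (Lemma~\ref{lem:tv-f} in the appendix), which ensures that $\|\nu_1 - \nu_2\|_\tv$ cannot increase when one passes to the law of a deterministic function. Consequently, it suffices to exhibit a single measurable map $\Phi$ such that for each $V \in \{A, B', C'\}$ one has $\widehat{f}_V = \Phi(Z_V)$; applying $\Phi$ coordinate-wise then realizes $(\widehat{f}_A, \widehat{f}_{B'}, \widehat{f}_{C'})$ as a deterministic function of $(Z_A, Z_{B'}, Z_{C'})$ jointly under both $\P^{(1)}$ and $\P^{(2)}$.

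To construct $\Phi$, recall $\widehat{f}_V = (f_V, \sigma_V^{\mathrm{last}})$, where $\sigma_V^{\mathrm{last}}$ denotes the final $2k$ digits of $\sigma_V$. Since $m = (\mu+1)k$, those digits occupy positions $(\mu-1)k+1, \ldots, (\mu+1)k$, which is exactly the concatenation of the last two non-overlapping $k$-mers $(x_{\mu-1}^{\sigma_V}, x_\mu^{\sigma_V})$. This pair is explicitly one of the coordinates of $Z_V$ in its definition, so $\sigma_V^{\mathrm{last}}$ is trivially recovered from $Z_V$. For the first component, Lemma~\ref{lem:new-space} has already established that $f_V$ is a deterministic function of $(x_\mu^{\sigma_V}, (N_{y,z}^{\sigma_V})_{y,z \in \{0,1\}^k})$, and each of these ingredients is itself readable from $Z_V$: the transition counts $N_{y,z}^{\sigma_V}$ appear directly, and $x_\mu^{\sigma_V}$ is the second entry of the pair $(x_{\mu-1}^{\sigma_V}, x_\mu^{\sigma_V})$. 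Composing these two reconstructions yields the desired $\Phi$.

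There is no real analytic obstacle here: the content of the lemma is a pure bookkeeping verification that every component of $\widehat{f}_V$ can be read off from $Z_V$. I note in passing that the extra coordinate $(x_0^{\sigma_V}, x_1^{\sigma_V})$ present in the definition of $Z_V$ is not needed for the current step and is included only for subsequent reductions (where the starting $k$-mers are needed to handle the algebraic relations among non-overlapping $k$-mer pairs). Once $\Phi$ is in hand, applying Lemma~\ref{lem:tv-f} to the product map
\[
(Z_A, Z_{B'}, Z_{C'}) \;\longmapsto\; \bigl(\Phi(Z_A),\Phi(Z_{B'}),\Phi(Z_{C'})\bigr) = (\widehat{f}_A, \widehat{f}_{B'}, \widehat{f}_{C'})
\]
delivers the asserted inequality between total variation distances.
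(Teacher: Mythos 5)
Your argument is correct and is essentially the paper's proof, just spelled out in more detail: the paper likewise invokes Lemma~\ref{lem:new-space} to realize $(\widehat{f}_A,\widehat{f}_{B'},\widehat{f}_{C'})$ as a deterministic function of $(Z_A,Z_{B'},Z_{C'})$ and then applies Lemma~\ref{lem:tv-f}. Your additional bookkeeping (identifying $\sigma_V^{\mathrm{last}}$ with the pair $(x_{\mu-1}^{\sigma_V},x_\mu^{\sigma_V})$ and noting which coordinates of $Z_V$ are actually used) is accurate and merely makes explicit what the paper leaves implicit.
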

\begin{proof}
Recall the definition of $\widehat{f}_V=(f_V,\,\sigma_V^{last})$ in Lemma \ref{L:reduc_hatf}.
By Lemma \ref{lem:new-space}, $(\widehat{f}_{A},\widehat{f}_{B'},\widehat{f}_{C'})$ is a function of $(Z_{A},Z_{B'},Z_{C'})$.  Lemma \ref{lem:tv-f} gives the result.
\end{proof}

\subsubsection{Dealing with redundancy}

The quantities $\{N_{y,z}^{\sigma}\}_{y,z \in \{0,1\}^{k}}$ satisfy certain linear relations described in Lemma \ref{lem:flowconstraints} below. We will get rid of these redundancies in Lemma \ref{lem:redundant}, which will be needed for a non-degeneracy condition in the local CLT; see Lemma \ref{lem:davis} below.

\begin{lemma}[Combinatorial constraints]
\label{lem:flowconstraints}
For any $\sigma \in \{0,1\}^{(\mu+1)k}$, $\mu\in\mathbb{N}$ and $z\in \{0,1\}^{k}$,
\begin{align}
\label{eq:nflow}
\mathbf{1}\{x_0^{\sigma} = z\}+\sum_{y\in \{0,1\}^{k}:\,y \neq z} N_{y,z}^{\sigma}
\;= \; \mathbf{1}\{x_\mu^{\sigma} = z\}+\sum_{y'\in \{0,1\}^{k}:\,y' \neq z} N_{z,y'}^{\sigma}.
\end{align}
Moreover 
\begin{align}
\label{eq:nsum}
\sum_{y,z \in \{0,1\}^{k}} N_{y,z}^\sigma = \mu.
\end{align}
\end{lemma}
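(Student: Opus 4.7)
The plan is to view the sequence of non-overlapping $k$-mers $x_0^\sigma, x_1^\sigma, \ldots, x_\mu^\sigma$ as a walk of length $\mu$ on the complete directed graph (with self-loops) whose vertex set is $\{0,1\}^k$, and then use a flow-conservation argument at each vertex $z$.

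For the first identity \eqref{eq:nflow}, I would count in two ways the number of indices $i \in \{0,1,\ldots,\mu\}$ at which $x_i^\sigma = z$. On the one hand, partitioning according to whether $i = \mu$ or $i \le \mu-1$ and using the fact that $\sum_{y'} N_{z,y'}^\sigma = \sum_{i=0}^{\mu-1} \mathbf{1}\{x_i^\sigma = z\}$, we get
\begin{equation*}
\#\{i : x_i^\sigma = z\} = \mathbf{1}\{x_\mu^\sigma = z\} + \sum_{y' \in \{0,1\}^k} N_{z,y'}^\sigma.
\end{equation*}
On the other hand, partitioning according to whether $i = 0$ or $i \ge 1$ and using $\sum_{y} N_{y,z}^\sigma = \sum_{i=1}^{\mu} \mathbf{1}\{x_i^\sigma = z\}$, we get
\begin{equation*}
\#\{i : x_i^\sigma = z\} = \mathbf{1}\{x_0^\sigma = z\} + \sum_{y \in \{0,1\}^k} N_{y,z}^\sigma.
\end{equation*}
Equating these two expressions and cancelling the self-loop term $N_{z,z}^\sigma$ from each side yields \eqref{eq:nflow}. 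This is precisely Kirchhoff's conservation law for the walk: the number of times the walk enters $z$ equals the number of times it leaves $z$, up to boundary corrections at the two endpoints.

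For the second identity \eqref{eq:nsum}, I would simply swap the order of summation in the definition \eqref{Def:N_yz}:
\begin{equation*}
\sum_{y,z \in \{0,1\}^k} N_{y,z}^\sigma = \sum_{j=0}^{\mu-1} \sum_{y,z} \mathbf{1}\{x_j^\sigma = y,\, x_{j+1}^\sigma = z\} = \sum_{j=0}^{\mu-1} 1 = \mu,
\end{equation*}
since for each $j$ exactly one pair $(y,z)$ matches $(x_j^\sigma, x_{j+1}^\sigma)$.

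There is no substantive obstacle here — the content is purely combinatorial bookkeeping. The only thing to be careful about is the treatment of self-loops $N_{z,z}^\sigma$: they contribute equally to the ``in'' and ``out'' counts at $z$, which is exactly why restricting to $y \neq z$ on both sides of \eqref{eq:nflow} preserves the equality. I would state this cancellation explicitly so that the reader sees why both sides are indexed over $y \neq z$ rather than over all of $\{0,1\}^k$.
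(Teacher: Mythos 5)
Your proof is correct, and it takes a genuinely different route from the paper's for the main identity~\eqref{eq:nflow}. The paper argues by induction on $\mu$: it verifies the base case $\mu=1$ directly, and in the inductive step peels off the last transition and reassembles the terms so that the induction hypothesis applies to the first $\mu$ transitions. Your argument instead counts the quantity $\#\{i : x_i^\sigma = z\}$ in two different ways, using the identities $\sum_{y'} N_{z,y'}^\sigma = \sum_{i=0}^{\mu-1}\mathbf{1}\{x_i^\sigma = z\}$ and $\sum_{y} N_{y,z}^\sigma = \sum_{i=1}^{\mu}\mathbf{1}\{x_i^\sigma = z\}$, then cancelling the common self-loop term $N_{z,z}^\sigma$. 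This is a direct, one-shot flow-conservation argument; it is shorter than the paper's induction, makes the ``in equals out up to endpoint corrections'' intuition transparent, and also makes clear why the restriction to $y \neq z$ appears on both sides of~\eqref{eq:nflow} (you rightly flag this as the point to be explicit about). The paper's inductive proof, while longer and more mechanical, has the mild advantage of being entirely local in $\mu$ and not requiring the reader to see the two re-indexed sums over $j+1$ versus $j$. For~\eqref{eq:nsum} the two arguments are essentially the same: the paper states it in one sentence and you spell out the Fubini-style swap of summation, which is the obvious justification.
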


\begin{proof}
Equation \eqref{eq:nsum} holds since the total number of {\color{b}adjacent} transitions in $\sigma$ is $\mu$.

{\color{b}
To verify \eqref{eq:nflow}, we observe that the total count $\sum_{i=0}^{\mu}\mathbf{1}\{x_i^{\sigma} = z\}$
can be computed two ways to give
\begin{align*}
\mathbf{1}\{x_0^{\sigma} = z\}+\sum_{y\in \{0,1\}^{k}} N_{y,z}^{\sigma}
\;= \; \mathbf{1}\{x_\mu^{\sigma} = z\}+\sum_{y'\in \{0,1\}^{k}} N_{z,y'}^{\sigma}
\end{align*}
Subtracting $N^\sigma_{z,z}$ from both sides yields \eqref{eq:nflow}.}\end{proof}

There are actually only $2^k$ linearly independent equations among the $2^k+1$ equations in \eqref{eq:nflow}--\eqref{eq:nsum}, as can be seen from the proof of Lemma \ref{lem:redundant} below. 
To ensure a non-degenerate limit when applying the central limit theorem, we utilize these $2^k$ linearly independent equations to remove $2^k$ redundant variables. Specifically, we remove the transition counts corresponding to the pairs
$\{(\vec{1},\,z):\,z\in \{0,1\}^{k}\} 
$, where $\vec{1}=(1,\cdots,1)\in \{0,1\}^{k}$ is the all-$1$ string.
\begin{lemma}[Redundancy]
\label{lem:redundant}
For any $\sigma \in \{0,1\}^{(\mu+1)k}$ and $\mu\in\mathbb{N}$, the vector $\big(x_0^\sigma, x_\mu^\sigma,  (N_{y,z}^\sigma)_{y,z \in \{0,1\}^{k}}\big)$ is a function of 
$\big(x_0^\sigma, x_\mu^\sigma, (N_{y,z}^\sigma)_{(y,z) \in \mathcal{H}}\big)$, where 
\begin{align*}
    \mathcal{H}
    = \left\{
    (y,z) \in \{0,1\}^{k} \times \{0,1\}^{k} : y \neq \vec{1}
    \right\}.
\end{align*}
\end{lemma}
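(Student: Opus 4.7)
The plan is to explicitly reconstruct the removed counts $\{N_{\vec{1},z}^{\sigma}:z\in\{0,1\}^{k}\}$ from the retained data $\bigl(x_0^{\sigma}, x_\mu^{\sigma},(N_{y,z}^{\sigma})_{(y,z)\in\mathcal{H}}\bigr)$ by solving the system of equations supplied by Lemma~\ref{lem:flowconstraints}. There are exactly $2^{k}$ removed variables (one for each $z\in\{0,1\}^{k}$), and Lemma~\ref{lem:flowconstraints} gives $2^{k}+1$ linear relations: the $2^{k}$ flow constraints \eqref{eq:nflow} indexed by $z$, together with the total-sum equation \eqref{eq:nsum}. Since the paper notes that exactly one of these is linearly dependent on the rest, we expect to be able to pin down the $2^{k}$ unknowns using a carefully chosen subset of $2^{k}$ independent equations.

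The first step is to use the flow equation \eqref{eq:nflow} at each $z\neq\vec{1}$ to isolate $N_{\vec{1},z}^{\sigma}$. For such a $z$, the summation $\sum_{y\neq z}N_{y,z}^{\sigma}$ on the left-hand side of \eqref{eq:nflow} contains $N_{\vec{1},z}^{\sigma}$ (since $\vec{1}\neq z$), while every other count $N_{y,z}^{\sigma}$ in that sum has $y\neq z$ and $y\neq\vec{1}$, hence lies in $\mathcal{H}$. On the right-hand side, the sum $\sum_{y'\neq z}N_{z,y'}^{\sigma}$ has leading index $z\neq\vec{1}$, so all of its counts also lie in $\mathcal{H}$. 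The indicator terms $\mathbf{1}\{x_0^{\sigma}=z\}$ and $\mathbf{1}\{x_\mu^{\sigma}=z\}$ are functions of the retained data. Rearranging therefore expresses $N_{\vec{1},z}^{\sigma}$ as an explicit function of $(x_0^{\sigma}, x_\mu^{\sigma},(N_{y,z}^{\sigma})_{(y,z)\in\mathcal{H}})$, for every $z\neq\vec{1}$.

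The second step recovers the remaining unknown $N_{\vec{1},\vec{1}}^{\sigma}$ from the total-sum identity \eqref{eq:nsum}. After the first step, all counts $N_{y,z}^{\sigma}$ with $y\neq\vec{1}$ are in $\mathcal{H}$, and all counts $N_{\vec{1},z}^{\sigma}$ with $z\neq\vec{1}$ have just been expressed as functions of the retained data; subtracting their sum from $\mu$ yields $N_{\vec{1},\vec{1}}^{\sigma}$, completing the reconstruction. Together Steps 1 and 2 exhibit $\bigl(x_0^{\sigma},x_\mu^{\sigma},(N_{y,z}^{\sigma})_{y,z\in\{0,1\}^{k}}\bigr)$ as a function of $\bigl(x_0^{\sigma},x_\mu^{\sigma},(N_{y,z}^{\sigma})_{(y,z)\in\mathcal{H}}\bigr)$, which is the claim.

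There is no real obstacle here beyond linear bookkeeping; the substantive content of the lemma is not the derivation but the selection of $\{(\vec{1},z):z\in\{0,1\}^{k}\}$ as the right set of $2^{k}$ variables to remove, so that every remaining coordinate is free of the algebraic dependencies identified in Lemma~\ref{lem:flowconstraints}. As a byproduct the argument also verifies the rank claim made in the text: the flow equation at $z=\vec{1}$ plays no role (it is the one redundant relation among the $2^{k}+1$ constraints), while the $2^{k}-1$ flow equations at $z\neq\vec{1}$ together with \eqref{eq:nsum} are linearly independent and suffice to determine the removed variables uniquely.
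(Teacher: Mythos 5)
Your proposal is correct and follows essentially the same route as the paper: isolate $N_{\vec{1},z}^{\sigma}$ for $z\neq\vec{1}$ from the flow equation \eqref{eq:nflow} indexed by $z$ (exploiting that this is the only $\mathcal{H}^c$-variable appearing there), then recover $N_{\vec{1},\vec{1}}^{\sigma}$ from the total-sum relation \eqref{eq:nsum}. The paper's displayed formulas \eqref{eq:N1z} and \eqref{eq:N11} are exactly the explicit rearrangements you describe.
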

\begin{proof}
It suffices to show that
for any $(y,z) \notin \mathcal{H}$, we can write $N_{y,z}^{\sigma}$ as a function of $x_0^\sigma, x_\mu^\sigma, \mu, $ and $(N_{y,z}^\sigma)_{(y,z) \in \mathcal{H}}$.  We do this first for $N_{\vec{1},z}^{\sigma}$ where $z \ne \vec{1}$, and then for $N_{\vec{1},\vec{1}}^{\sigma}$.

Among the $2^k$ equations in \eqref{eq:nflow}, each one indexed by $z \ne \vec{1}$ has exactly one variable in $\mathcal{H}^{c}$, namely $N_{\vec{1},z}^{\sigma}$. Precisely, \eqref{eq:nflow} gives 
\begin{align*}
    N_{\vec{1},z}^{\sigma} = \ind\{x_{\mu}^{\sigma} = z\} - \ind\{x_0^{\sigma} = z\} 
    + \sum_{y' \ne z}N_{z,y'}^{\sigma} - \sum_{y \ne z, \vec{1}} N_{y,z}^{\sigma},
\end{align*} 
in which all terms on the right come from $\mathcal{H}$. Hence  $N_{\vec{1},z}^{\sigma}$  can be written as a function of the required variables for each $z\neq \vec{1}$.

The variable $N_{\vec{1},\vec{1}}^{\sigma}$ is featured only in equation \eqref{eq:nsum}, and we obtain \begin{align*}
    N_{\vec{1},\vec{1}}^{\sigma} = \mu - \sum_{(y,z) \ne (\vec{1},\vec{1})}N_{y,z}^{\sigma}.
\end{align*} 
\end{proof}

For points $V\in\{A, B',C'\}$ on the trees, we let
\begin{equation}\label{Def:NZ}
 Z'_V = \left((x_0^{\sigma_V},x_1^{\sigma_V}),\,(x_{\mu-1}^{\sigma_V}, x_\mu^{\sigma_V}),\, N_{\mathcal{H}}^{\sigma_V}  \right) \quad \text{where} \quad
 N_{\mathcal{H}}^{\sigma_V}=(N_{y,z}^{\sigma_V})_{(y,z) \in \mathcal{H}}.
\end{equation}
\begin{lemma}[Reduction to non-redundant transitions]
\label{lem:red-3}
Suppose $m=(\mu+1)k$ for some $\mu\in\mathbb{N}$. Then
\begin{align*}
\left\|\P^{(1)}_{Z_A, Z_{B'}, Z_{C'}} - \P^{(2)}_{Z_A, Z_{B'}, Z_{C'}}\right\|_\tv
\leq 
\left\|\P^{(1)}_{Z'_A, Z'_{B'}, Z'_{C'}} - \P^{(2)}_{Z'_A, Z'_{B'}, Z'_{C'}}\right\|_\tv.
\end{align*}
\end{lemma}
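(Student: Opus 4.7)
The plan is to apply the data-processing inequality for total variation (Lemma~\ref{lem:tv-f}), exactly as in the proofs of Lemmas~\ref{lem:red-1}, \ref{L:reduc_hatf}, and \ref{lem:red-2}. What needs to be exhibited is a deterministic map sending $(Z'_A, Z'_{B'}, Z'_{C'})$ to $(Z_A, Z_{B'}, Z_{C'})$; Lemma~\ref{lem:tv-f} then yields the inequality immediately.

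Since both tuples are built coordinate-wise from the three sequences $\sigma_A, \sigma_{B'}, \sigma_{C'}$, I would work one vertex at a time and check that $Z_V$ is a function of $Z'_V$ for each $V \in \{A, B', C'\}$. From the definition \eqref{Def:NZ} of $Z'_V$ one can read off the endpoint pairs $(x_0^{\sigma_V}, x_1^{\sigma_V})$ and $(x_{\mu-1}^{\sigma_V}, x_\mu^{\sigma_V})$ directly, and in particular the individual $k$-mers $x_0^{\sigma_V}$ and $x_\mu^{\sigma_V}$, together with the non-redundant counts $N_{\mathcal{H}}^{\sigma_V}$. Lemma~\ref{lem:redundant} then guarantees that $(x_0^{\sigma_V}, x_\mu^{\sigma_V}, (N_{y,z}^{\sigma_V})_{y,z \in \{0,1\}^{k}})$ is a deterministic function of $(x_0^{\sigma_V}, x_\mu^{\sigma_V}, N_{\mathcal{H}}^{\sigma_V})$; here I use that $\mu$ is a fixed constant (since $m=(\mu+1)k$ is fixed), so the explicit recovery formulas \eqref{eq:N1z}--\eqref{eq:N11} yield the deleted transition counts. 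Reassembling these with the endpoint pairs already present in $Z'_V$ produces $Z_V$.

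There is no real obstacle here: this is a short bookkeeping step whose substantive content has already been absorbed into Lemma~\ref{lem:redundant}. The one small thing to be careful about is making sure that every piece of $Z_V$ is actually accessible from $Z'_V$ — the endpoint pairs $(x_0^{\sigma_V}, x_1^{\sigma_V})$ and $(x_{\mu-1}^{\sigma_V}, x_\mu^{\sigma_V})$ are stored verbatim in $Z'_V$, and the only missing piece, namely the full count vector $(N_{y,z}^{\sigma_V})_{y,z}$, is supplied by Lemma~\ref{lem:redundant}. Once this functional dependence is recorded, Lemma~\ref{lem:tv-f} closes the proof in a single line.
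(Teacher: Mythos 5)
Your proof is correct and follows exactly the route the paper takes: observe via Lemma~\ref{lem:redundant} that $(Z_A,Z_{B'},Z_{C'})$ is a deterministic function of $(Z'_A,Z'_{B'},Z'_{C'})$, then invoke the data-processing inequality of Lemma~\ref{lem:tv-f}. The paper states this in two lines; your version usefully spells out the coordinate-wise recovery and the (correct, if minor) point that $\mu$ is fixed and hence available in the recovery formula~\eqref{eq:N11}, but the substance is identical.
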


\begin{proof}
From Lemma \ref{lem:redundant}, $(Z_{A},Z_{B'},Z_{C'})$ is a function of 
$(Z_{A}',Z_{B'}',Z_{C'}')$.  Then the result follows from Lemma \ref{lem:tv-f}.
\end{proof}

\subsubsection{
{\color{b}Final reduction step}}

By Lemmas~\ref{lem:red-1},~\ref{L:reduc_hatf},~\ref{lem:red-2} and~\ref{lem:red-3} above, together with
the second equality of Lemma~\ref{lem:tv-pointwise} in the appendix, {\color{b}to establish Theorem \ref{T:main}} it suffices to prove that 
\begin{align}
\label{eq:pointwise-zprime}
{\color{b}\liminf_{\mu\to\infty}}
\sum_{z'_A, z'_{B'}, z'_{C'}}
\P^{(1)}_{Z'_A, Z'_{B'}, Z'_{C'}}(z'_A, z'_{B'}, z'_{C'})\land \P^{(2)}_{Z'_A, Z'_{B'}, Z'_{C'}}(z'_A, z'_{B'}, z'_{C'}) > 0,
\end{align}
where the sum is taken over the set $\left(\{0,1\}^{2k}\times\{0,1\}^{2k}\times  \{0,1,\cdots,\mu\}^{\mathcal{H}} \right)^3$, and $m=(\mu+1)k$. 

\medskip

Our final reduction step in this section is to condition on the event 
\begin{equation}\label{Def:EventE}
\widetilde{\mathcal{E}} 
= 
\left\{
{\color{b}
(x_0^{\sigma_A},x_1^{\sigma_A}) = (x_0^{\sigma_{B'}},x_1^{\sigma_{B'}}) = (x_0^{\sigma_{C'}},x_1^{\sigma_{C'}})} = (\vec{0},\vec{0})
 \right\},
\end{equation}
where {\color{b2}$x^{\sigma_V}_j\in \{0,1\}^{k}$} are the {\color{b}adjacent} $k$-mers in the sequence $\sigma_V$ at point $V\in \{A, B',C'\}$, defined in \eqref{Def:NonOverlap}.
Precisely, for $i\in\{1,2\}$ we 
let $\widetilde{\P}^{(i)}=\widetilde{\P}^{(i),m}$  be the  conditional measures under  $\P^{(i)}=\P^{(i),m}$ given the event $\widetilde{\mathcal{E}}$.
Then 
\begin{align*}
&\sum_{z'_A, z'_{B'}, z'_{C'}}
\P^{(1)}_{Z'_A, Z'_{B'}, Z'_{C'}}(z'_A, z'_{B'}, z'_{C'})\land \P^{(2)}_{Z'_A, Z'_{B'}, Z'_{C'}}(z'_A, z'_{B'}, z'_{C'})\\ 
& \geq  \sum_{z'_A, z'_{B'}, z'_{C'}} \left(
\widetilde{\P}^{(1)}_{Z'_A, Z'_{B'}, Z'_{C'}}(z'_A, z'_{B'}, z'_{C'})
\,\P^{(1)}[\widetilde{\mathcal{E}}]\right)
\land \left(
\widetilde{\P}^{(2)}_{Z'_A, Z'_{B'}, Z'_{C'}}(z'_A, z'_{B'}, z'_{C'})
\,\P^{(2)}[\widetilde{\mathcal{E}}] \right)\\ 
&\geq 
\,c_1
\sum_{z'_A, z'_{B'}, z'_{C'}}
\widetilde{\P}^{(1)}_{Z'_A, Z'_{B'}, Z'_{C'}}(z'_A, z'_{B'}, z'_{C'})
\land 
\widetilde{\P}^{(2)}_{Z'_A, Z'_{B'}, Z'_{C'}}(z'_A, z'_{B'}, z'_{C'}),
\end{align*}
where $c_1:=\P^{(1)}[\widetilde{\mathcal{E}}]
\land
\,\P^{(2)}[\widetilde{\mathcal{E}}]$ is positive and does not depend on $\mu$.


Hence, to show~\eqref{eq:pointwise-zprime} it suffices to prove that
\begin{align}
\label{eq:pointwise-ztilde}
{\color{b}\liminf_{\mu\to\infty}}
\sum_{(z'_A, z'_{B'}, z'_{C'})\in (\mathcal{S}^{\mu}_0)^3}
\widetilde{\P}^{(1)}_{Z'_A, Z'_{B'}, Z'_{C'}}(z'_A, z'_{B'}, z'_{C'})\land \widetilde{\P}^{(2)}_{Z'_A, Z'_{B'}, Z'_{C'}}(z'_A, z'_{B'}, z'_{C'}) > 0,
\end{align}
where  $\mathcal{S}^{\mu}_0:=
\{(\vec{0},\vec{0})\}\times\{0,1\}^{2k} \times \{0,1,\cdots,\mu\}^{\mathcal{H}}$.
\medskip

For the rest of the proof, we shall 
establish \eqref{eq:pointwise-ztilde} by obtaining suitable lower bounds on the probabilities in \eqref{eq:pointwise-ztilde} through a local limit theorem.

\subsection{Applying a local limit theorem}\label{S:LocalCLT}

It will be convenient to consider infinite
sequences, since we shall employ a local limit theorem as the sequence length tends to infinity (i.e., $\mu\to\infty$).
Let $\P^{(i),\infty}$ be
the probability measure of the CFN model on $T_i$  
with infinite sequence length
and $\widetilde{\P}^{(i),\infty}$
be the conditional measure under  $\P^{(i),\,\infty}$, given the event $\widetilde{\mathcal{E}}$.

\subsubsection{Pairs of triplets as a Markov chain}\label{sec:asMC}

We shall apply Doeblin's method (see e.g.~\cite{culanovski1961twenty}).  
For  $V\in\{A, B', C'\}$, we let $\sigma_V(n) = (x_0^V,\ldots,x_{n}^V)\in \{0,1\}^{k(n+1)}$ be the first $n+1$ {\color{b}adjacent} $k$-mers of $\sigma_V$, 
where  $0\leq n\leq \mu$ if $\sigma_V$ has length $(\mu+1)k$ and $n\in\ZZ_+$ if $\sigma_V\in\{0,1\}^{\mathbb{N}}$ has infinite length.
For all such $n$,  we consider the triples
\begin{equation}\label{Def:triples}
\vec{X}_n=(x^A_n,\,x^{B'}_n,\,x^{C'}_n)\in\{0,1\}^{3k}.
\end{equation}

Under $\widetilde{\P}^{(i),\infty}$, $\{\vec{X}_n\}_{n\in\ZZ_+}$ is a sequence of independent random vectors and the pairs $\vec{M}_n=(\vec{X}_n,\,\vec{X}_{n+1})$ form a Markov chain with a finite state space. This Markov chain is irreducible since the support of $(\vec{X}_n,\,\vec{X}_{n+1})$ is all of $\{0,1\}^{3k}\times\{0,1\}^{3k}$ for all $n$.
{\color{b}The stationary distribution $\Theta_{\vec{M}}$ of $\{\vec{M}_n\}_{n\in\ZZ_+}$ is 
\begin{equation}\label{Theta1}
\Theta_{\vec{M}}(\vec{y},\vec{z}) = \widetilde{\P}^{(i),\infty}(\vec{X}_2=\vec{y})\,\widetilde{\P}^{(i),\infty}(\vec{X}_2=\vec{z}),\quad\text{for }\vec{y},\,\vec{z}\in \{0,1\}^{3k}.
\end{equation}}
Let $\tau_0 = 0$, let $\tau_1$ be {\color{b}the first $n>0$} such that $\vec{M}_n=(\vec{0},\vec{0})$ and in general, for $\ell \geq 1$, let 
\begin{equation}\label{Def:tau}
    \tau_{\ell} = \inf\{n > \tau_{\ell-1}: \vec{M}_n = (\vec{0},\vec{0})\},
\end{equation}
where an infimum over an empty set is $+\infty$ by convention.



The connection between $\widetilde{\P}^{(i)}=\widetilde{\P}^{(i),m}$ and $\widetilde{\P}^{(i),\infty}$ that we will need is given by Lemma \ref{L:minfinity} below. We let
\begin{equation*}
N_{y,z}^{V}(n)=N_{y,z}^{\sigma_V(n)}=\sum_{j=0}^{n-1}\ind\{x_j^V = y,\,x_{j+1}^V=z\}
\end{equation*}
be the number of {\color{b}adjacent} transitions from $y$ to $z$ up to $x^V_n$, as in \eqref{Def:N_yz}, with the convention that $N_{y,z}^{V}(0)=0$.
\begin{lemma}[Infinite sequences]
\label{L:minfinity}
For all $\mu\in\ZZ_+$, $k\in\mathbb{N}$, $\ell\in\{1,2,\cdots, \mu\}$, $(a,b,c)\in \ZZ_+^3$ and $i\in\{1,2\}$, the event
\[
\left\{\tau_{\ell}=\mu \;,\;
\left(N_{\mathcal{H}}^{A}(\tau_{\ell})
,\, N_{\mathcal{H}}^{B'}(\tau_{\ell})
,\, N_{\mathcal{H}}^{C'}(\tau_{\ell})
\right)
=(a,b,c)\right\}
\]
has the same probability under $\widetilde{\P}^{(i),m}$ and $\widetilde{\P}^{(i),\infty}$, where $m=(\mu+1)k$.
\end{lemma}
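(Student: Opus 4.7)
The plan is to exploit the site-wise independence of the CF model. Each of the $m$ (or infinitely many) sites evolves as an independent Markov chain along the tree, started from the uniform root marginal, so the joint law of the first $m=(\mu+1)k$ digits of the leaf sequences under $\P^{(i),\infty}$ coincides with their law under $\P^{(i),m}$. The conditioning event $\widetilde{\mathcal{E}}$ is a function only of the first $2k$ digits of the sequences at $A$, $B'$, $C'$, so this marginal identity survives conditioning. Consequently, $\widetilde{\P}^{(i),m}$ and $\widetilde{\P}^{(i),\infty}$ agree on the $\sigma$-algebra generated by $(\vec{X}_0,\ldots,\vec{X}_\mu)$: under either measure, $\vec{X}_0=\vec{X}_1=\vec{0}$ deterministically by $\widetilde{\mathcal{E}}$, while $\vec{X}_2,\ldots,\vec{X}_\mu$ are i.i.d.\ copies of the single-block law at $(A,B',C')$ under the CF model on $T_i$.

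With this marginal identity in hand, the next step is to check that the event in the lemma is measurable with respect to (an at most one-block extension of) this common $\sigma$-algebra. The three transition counts $N_{\mathcal{H}}^V(\tau_\ell)=N_{\mathcal{H}}^V(\mu)$ are, by \eqref{Def:N_yz}, deterministic functions of $(x_0^V,\ldots,x_\mu^V)$ and hence lie in the common $\sigma$-algebra. The stopping-time condition $\{\tau_\ell=\mu\}$ asserts that the $\ell$-th return of $\vec{M}_n=(\vec{X}_n,\vec{X}_{n+1})$ to $(\vec{0},\vec{0})$ occurs exactly at $n=\mu$; this is determined by $(\vec{M}_0,\ldots,\vec{M}_\mu)$, and hence by $(\vec{X}_0,\ldots,\vec{X}_{\mu+1})$.

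The main subtlety, and the only part requiring care, is the boundary block $\vec{X}_{\mu+1}$ that arises through $\vec{M}_\mu$, since this block is part of the infinite sequence but lies beyond the finite sequence of length $m$. I would resolve this by realizing both measures on a common infinite-sequence probability space: sample an infinite family of leaf sequences under $\widetilde{\P}^{(i),\infty}$ and interpret $\widetilde{\P}^{(i),m}$ as the law of the first $m$ sites under this sampling. Because $\vec{X}_{\mu+1}$ is independent of $(\vec{X}_0,\ldots,\vec{X}_\mu)$ and has the same single-block marginal under either measure (namely the single-site CF joint law on $\{A,B',C'\}$, raised to the $k$-fold product by site independence), the probability of the event factors as a product of an $(\vec{X}_0,\ldots,\vec{X}_\mu)$-measurable event, common to both measures by Step~1, and the factor $\mathbb{P}[\vec{X}_{\mu+1}=\vec{0}]$, identical under both measures by the same marginal equality. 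Hence the two probabilities coincide, giving the lemma.
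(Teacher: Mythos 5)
Your proof is in essence the same one-line argument the paper gives (``follows directly from the construction of the CF model in which non-overlapping $k$-mers are independent''): the finite-length law is the marginal of the infinite-length law on the first $m$ sites, the conditioning on $\widetilde{\mathcal{E}}$ depends only on the first $2k$ sites, and the transition counts $N_\mathcal{H}^V(\mu)$ are functions of $(x_0^V,\dots,x_\mu^V)$. Steps~1--3 of your proposal are exactly this and are fine.

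The ``boundary block'' issue you raise in Step~4 does not in fact need the factorization argument, and the argument as written is not quite sound. What you have noticed is an indexing slip in the paper rather than a real probabilistic subtlety. With $\vec{M}_n=(\vec{X}_n,\vec{X}_{n+1})$, a sequence of length $m=(\mu+1)k$ produces blocks $\vec{X}_0,\dots,\vec{X}_\mu$ and hence $\vec{M}_0,\dots,\vec{M}_{\mu-1}$ only, so $\tau_\ell$ takes values in $\{1,\dots,\mu-1\}\cup\{\infty\}$ and the literal event $\{\tau_\ell=\mu\}$ is empty under $\widetilde{\P}^{(i),m}$. The intended reading --- which is what makes the decomposition in Section~\ref{S:Final} work, since there $(x_{\mu-1}^V,x_\mu^V)=(\vec{0},\vec{0})$ corresponds to $\vec{M}_{\mu-1}=(\vec{0},\vec{0})$ --- is that the return time is indexed so that the last admissible return is at the final transition, i.e.\ one should read $\{\tau_\ell=\mu\}$ as a constraint on $\vec{M}_{\mu-1}$ (or equivalently reindex the chain by one). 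Under this reading the event is measurable with respect to $(\vec{X}_0,\dots,\vec{X}_\mu)$ alone, no block beyond the finite window is involved, and the lemma follows immediately from Steps~1--3 without any boundary factor. Taking the paper's indexing at face value, your factorization is the right instinct but cannot be made rigorous as stated: after you realize $\widetilde{\P}^{(i),m}$ as the pushforward of $\widetilde{\P}^{(i),\infty}$ to the first $m$ sites, the variable $\vec{X}_{\mu+1}$ and the event $\{\vec{X}_{\mu+1}=\vec{0}\}$ simply do not exist on the target space, so writing $\widetilde{\P}^{(i),m}[\vec{X}_{\mu+1}=\vec{0}]$ as a factor on the finite side is not meaningful; the claim in that case would in fact be false, since $\widetilde{\P}^{(i),m}[\tau_\ell=\mu]=0$ while the infinite-sequence probability is positive. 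In short: your Steps~1--3 are the paper's proof; Step~4 correctly flags an inconsistency in the paper's bookkeeping, but the repair is not the factorization you propose --- it is simply recognizing that, under the intended indexing, the event never leaves the first $m$ sites.
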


This lemma follows directly from the construction of the CFN model, in which non-overlapping, adjacent $k$-mers are independent. 
The rest of Section \ref{S:LocalCLT} concerns infinite sequences.

\subsubsection{Independent excursions and a multivariate local CLT}

We extract i.i.d.~random variables from excursions of the Markov chain $\vec{M}$.
Define, for  $V\in\{A, B', C'\}$,
\begin{align*}
Y_V(\ell)=
N_{\mathcal{H}}^{V}(\tau_{\ell})-N_{\mathcal{H}}^{V}(\tau_{\ell-1})
= \big(N_{y,z}^{V}(\tau_{\ell})
- N_{y,z}^{V}(\tau_{\ell-1})\big)_{(y,z) \in \mathcal{H}} 
\end{align*}
and let
\begin{align*}
\mathbf{Y}(\ell)
= \big(\tau_{\ell} - \tau_{\ell-1},Y_A(\ell), Y_{B'}(\ell),Y_{C'}(\ell)\big).
\end{align*}

Note that these random vectors take values in $\mathbb{N} \times \left( \ZZ_+^{\mathcal{H}}\right)^3 \subset \ZZ_+^{d}$ where $d=1+3(2^{2k}-2^k)$, because $|\mathcal{H}|=2^{2k}-2^k$. 
\begin{lemma}
\label{L:minfinity2}
The vectors $\{\mathbf{Y}(\ell)\}_{\ell=1}^{\infty}$ are
 i.i.d. under $\widetilde{\P}^{(i),\infty}$ for both $i=1$ and $2$.
Further, their partial sum is equal to
\begin{equation}\label{partialsumY}
\sum_{\iota =1}^\ell \mathbf{Y}(\iota)= \left(\tau_{\ell},\, N_{\mathcal{H}}^{A}(\tau_{\ell})
,\, N_{\mathcal{H}}^{B'}(\tau_{\ell})
,\, N_{\mathcal{H}}^{C'}(\tau_{\ell})
\right).
\end{equation}
\end{lemma}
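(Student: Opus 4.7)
I plan to establish the i.i.d.\ claim via the strong Markov property for $\vec M$ at its successive returns to $(\vec{0},\vec{0})$, and to obtain the partial sum identity by a telescoping computation.

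Under $\widetilde{\P}^{(i),\infty}$ the conditioning event $\widetilde{\mathcal{E}}$ fixes only $\vec{X}_0$ and $\vec{X}_1$ (both equal to $(\vec{0},\vec{0},\vec{0})$). Because the CF model makes sites independent, the non-overlapping $k$-mer blocks are also independent across $n$, so the unconditional triples $\{\vec{X}_n\}_{n\geq 0}$ are i.i.d.\ with some common law depending on $T_i$; conditioning on the values of $\vec{X}_0,\vec{X}_1$ therefore leaves the joint law of $(\vec{X}_2,\vec{X}_3,\ldots)$ untouched --- still i.i.d.\ with the same common distribution. Consequently $\{\vec{M}_n\}_{n\ge 0}$, with $\vec{M}_n=(\vec{X}_n,\vec{X}_{n+1})$, is a time-homogeneous Markov chain on the finite state space $\{0,1\}^{3k}\times\{0,1\}^{3k}$ starting deterministically at $\vec{M}_0=(\vec{0},\vec{0})$.

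Each $\tau_\ell$ from \eqref{Def:tau} is a stopping time for $\vec M$, and since the chain is irreducible on a finite state space (noted in the text), it is positive recurrent, so each $\tau_\ell<\infty$ almost surely under $\widetilde{\P}^{(i),\infty}$. Applying the strong Markov property inductively at $\tau_1,\tau_2,\ldots$, the post-$\tau_\ell$ chain is distributed as an independent copy of $\vec M$ started at $(\vec{0},\vec{0})$. In particular, the excursions
$$\mathcal{G}_\ell := \bigl(\vec{M}_{\tau_{\ell-1}},\vec{M}_{\tau_{\ell-1}+1},\ldots,\vec{M}_{\tau_\ell}\bigr), \qquad \ell\geq 1,$$
are i.i.d. Because $\mathbf{Y}(\ell)=\bigl(\tau_\ell-\tau_{\ell-1},Y_A(\ell),Y_{B'}(\ell),Y_{C'}(\ell)\bigr)$ is a deterministic measurable function of the single excursion $\mathcal{G}_\ell$ --- its first coordinate being the excursion length, and each $Y_V(\ell)$ tallying the non-overlapping transitions in $\mathcal{H}$ read off from the $V$-slots of the $\vec{X}_n$'s inside $\mathcal{G}_\ell$ --- the vectors $\{\mathbf{Y}(\ell)\}_{\ell\geq 1}$ inherit the i.i.d.\ property. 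For the partial sum \eqref{partialsumY} I would telescope:
$$\sum_{\iota=1}^{\ell}(\tau_\iota-\tau_{\iota-1})=\tau_\ell-\tau_0=\tau_\ell,$$
using $\tau_0=0$, and for each $V\in\{A,B',C'\}$,
$$\sum_{\iota=1}^{\ell} Y_V(\iota)=\sum_{\iota=1}^\ell\bigl(N_{\mathcal{H}}^V(\tau_\iota)-N_{\mathcal{H}}^V(\tau_{\iota-1})\bigr)=N_{\mathcal{H}}^V(\tau_\ell)-N_{\mathcal{H}}^V(0)=N_{\mathcal{H}}^V(\tau_\ell),$$
using the convention $N_{y,z}^V(0)=0$. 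Stacking the four coordinate identities yields \eqref{partialsumY}.

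The one step that warrants care, and which I expect to be the main (though ultimately minor) obstacle, is verifying the time-homogeneity of $\vec M$ after the conditioning: one must check that fixing $\vec{X}_0,\vec{X}_1$ to a specific value does not alter the transition kernel governing $\vec{M}_n\to\vec{M}_{n+1}$ for $n\geq 1$. This is precisely the place where the site-independence built into the CF model is used, since it decouples the first two blocks from all later ones so that the common law of $\vec{X}_n$ for $n\geq 2$ is unchanged by the conditioning. Once this is spelled out, the remainder of the argument is a routine application of the strong Markov property together with the telescoping sums above.
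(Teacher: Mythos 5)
Your proof is correct and takes essentially the same route the paper intends when it says the first statement is ``obvious from the construction'': you make explicit the standard excursion decomposition of the Markov chain $\vec M$ at its successive returns to $(\vec 0,\vec 0)$ via the strong Markov property, together with the observation that conditioning on $\vec X_0,\vec X_1$ leaves the i.i.d.\ tail $(\vec X_2,\vec X_3,\ldots)$ and hence the transition kernel unchanged. The telescoping for \eqref{partialsumY} matches the paper's appeal to the conventions $\tau_0=N^V_{y,z}(0)=0$.
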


\begin{proof}
The first statement is obvious from the construction of the CFN model. The equality \eqref{partialsumY} follows from the definitions of $\mathbf{Y}$ and the conventions $\tau_0=N_{y,z}^{V}(0)=0$.
\end{proof}



\medskip

We will apply a multivariate local CLT of Davis and McDonald~\cite[Theorem 2.1]{Davis1995}
to  the i.i.d. vectors 
$\{\mathbf{Y}(\ell)\}_{\ell=1}^{\infty} \subset \ZZ_+^d$ 
under $\widetilde{\P}^{(i),\infty}$. 
Theorem 2.1 of \cite{Davis1995} works for an array of independent vectors. Here we need only a sequence of i.i.d.~vectors so we state this result for the case of i.i.d. vectors in $\ZZ^d$. 
\begin{theorem} {\rm \cite[Theorem 2.1]{Davis1995}} \label{T:DMCLT}
Let $\{\mathbf{X}_j\}_{j=1}^{\infty}$ be a sequence of independent  $\mathbb{Z}^{d}$-valued random variables with a common probability mass function $f$,  finite mean $\mathbf{m}\in \RR^d$ and {\color{b}covariance matrix $\bSigma\in \RR^{d\times d}$. } 
Suppose the following hold:
\begin{enumerate}
    \item[(a)] {\color{b}For all $r\in\{1,2,\ldots,d\}$, there exists $\mathbf{x}_r \in \mathbb{Z}^{d}$ such that
    $$f(\mathbf{x}_r) \land f(\mathbf{x}_r + \mathbf{e}_r)\,>\,0,$$ 
    where $\mathbf{e}_r\in \mathbb{Z}^{d}$ is the $r$-th standard basis vector.}
     {\color{b}\item[(b)] 
   The sequence}
    $\frac{\textbf{S}_{\ell}-\ell\,\mathbf{m}}{\sqrt{\ell}}$  converges in distribution to the multivariate normal distribution $\mathcal{N}({\bf 0},\,\bSigma)$ as $\ell \to\infty$, {\color{b}where $\textbf{S}_\ell=\sum_{j=1}^\ell\textbf{X}_j$.}
\end{enumerate}
Then the following uniform convergence holds as $\ell \rightarrow \infty$: 
\begin{align*}
    \sup_{\mathbf{y} \in \mathbb{Z}^{d}}\left| \ell^{d/2}\,\P[\textbf{S}_\ell = \mathbf{y}] - \varphi\left(\frac{\mathbf{y}-\ell\,\mathbf{m}}{\sqrt{\ell}}\right)\right| \rightarrow 0,
\end{align*}
where  $\varphi$
is the probability density function of the multivariate normal distribution $\mathcal{N}({\bf 0},\,\bSigma)$.
\end{theorem}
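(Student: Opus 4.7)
The plan is to follow the classical Fourier-analytic approach to the local CLT. First, I would invoke the Fourier inversion formula on the torus:
$$\mathbb{P}[\mathbf{S}_n = \mathbf{y}] = \frac{1}{(2\pi)^d}\int_{[-\pi,\pi]^d} e^{-it\cdot \mathbf{y}}\,\phi(t)^n\,dt,$$
where $\phi(t)=\mathbb{E}[e^{it\cdot\mathbf{X}_1}]$, and represent the Gaussian density $\varphi$ by the analogous inversion on $\mathbb{R}^d$. Rescaling $t=u/\sqrt{n}$ in the lattice integral and extracting the centering factor $e^{-iu\cdot \mathbf{m}\sqrt{n}}$ recasts $n^{d/2}\mathbb{P}[\mathbf{S}_n=\mathbf{y}]$ as an integral of $\psi_n(u)^n\, e^{-iu\cdot(\mathbf{y}-n\mathbf{m})/\sqrt{n}}$ over $[-\pi\sqrt{n},\pi\sqrt{n}]^d$, where $\psi_n(u)=e^{-iu\cdot\mathbf{m}/\sqrt{n}}\phi(u/\sqrt{n})$ is the characteristic function of $(\mathbf{X}_1-\mathbf{m})/\sqrt{n}$. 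Because $\mathbf{y}$ enters only through an oscillatory factor of modulus one, pulling absolute values inside delivers a bound uniform in $\mathbf{y}$.

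The core of the proof reduces to showing the $L^1$ convergence $\psi_n(u)^n\to e^{-u^T\bSigma u/2}$ on the enlarging window, plus controlling the Gaussian tail past $\pi\sqrt{n}$. I would split the original $t$-domain into a near-zero region $|t|_\infty\le\delta$ and a far annulus $\delta\le|t|_\infty\le\pi$. Near $t=0$, the finite second moments implicit in condition (c) yield the Taylor expansion $\log\phi(t)=it\cdot\mathbf{m}-\tfrac{1}{2}t^T\bSigma t+o(|t|^2)$, so $\psi_n(u)^n\to e^{-u^T\bSigma u/2}$ pointwise on compact $u$-sets; the positive definiteness of $\bSigma$ (condition (b)) gives the quadratic bound $1-|\phi(t)|\ge c|t|^2$ for small $t$, which translates into the integrable dominator $|\psi_n(u)^n|\le e^{-c|u|^2}$ on the entire near-zero window, and dominated convergence then passes this piece to $\int e^{-iu\cdot(\mathbf{y}-n\mathbf{m})/\sqrt{n}}e^{-u^T\bSigma u/2}du$.

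The main obstacle is the far region $\delta\le|t|_\infty\le\pi$, where I need the strict bound
$$\rho(\delta):=\sup_{t\in[-\pi,\pi]^d,\ |t|_\infty\ge\delta}|\phi(t)|\,<\,1,$$
which will force $n^{d/2}\int_{\delta\le|t|_\infty\le\pi}|\phi(t)|^n\,dt\le n^{d/2}(2\pi)^d\rho(\delta)^n\to 0$. This is precisely where condition (a) enters. If instead $|\phi(t_0)|=1$ for some $t_0\ne 0$ in $[-\pi,\pi]^d$, then $t_0\cdot\mathbf{X}_1$ lies almost surely in a single coset $c+2\pi\mathbb{Z}$, so any $\mathbf{x},\mathbf{x}'$ in $\{f>0\}$ satisfy $t_0\cdot(\mathbf{x}-\mathbf{x}')\in 2\pi\mathbb{Z}$. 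Condition (a) guarantees, for each coordinate direction $r$, a pair $\mathbf{x},\mathbf{x}+\mathbf{e}_r$ both in the support of $f$, forcing $(t_0)_r\in 2\pi\mathbb{Z}\cap[-\pi,\pi]=\{0\}$; hence $t_0=0$, a contradiction. Continuity of $\phi$ on the compact annulus then yields $\rho(\delta)<1$. Finally, the Gaussian tail $\int_{|u|_\infty\ge\pi\sqrt{n}}e^{-u^T\bSigma u/2}du$ vanishes as $n\to\infty$ by condition (b), and combining near-zero convergence, far-region exponential decay, and Gaussian-tail vanishing, then taking the supremum over $\mathbf{y}\in\mathbb{Z}^d$, yields the claimed uniform convergence.
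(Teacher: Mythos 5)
Note first that the paper does not prove this theorem: it is quoted verbatim from Davis and McDonald, and the statement is explicitly attributed to \cite[Theorem~2.1]{Davis1995} (the surrounding text says so as well). There is therefore no internal proof here to compare against; your attempt is best judged against that cited source.

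Your Fourier-analytic argument is the classical route to the multivariate lattice local CLT and is essentially sound as a sketch. The torus inversion formula, the rescaling $t=u/\sqrt{n}$, the near-zero/far split with the dominator $|\phi(u/\sqrt{n})|^n \le e^{-c|u|^2}$ (using positive definiteness of $\bSigma$), and especially your use of condition (a) to force any $t_0\in[-\pi,\pi]^d$ with $|\phi(t_0)|=1$ to satisfy $(t_0)_r\in 2\pi\mathbb{Z}\cap[-\pi,\pi]=\{0\}$ for each $r$ — hence $t_0=0$, then $\rho(\delta)<1$ by compactness, then $n^{d/2}\rho(\delta)^n\to 0$ — all check out. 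One small slip: the Taylor expansion of $\log\phi$ requires finite second moments, which come from the hypothesis that the covariance matrix $\bSigma$ exists, not from the distributional convergence in condition (c). However, this is a genuinely different route from the one in \cite{Davis1995}, whose very title advertises an \emph{elementary} proof that deliberately avoids characteristic functions. There the quantities $q_r(f)$ in condition (a) are used to peel off, in each coordinate direction $r$, a uniform mixture over some pair $\{\mathbf{x}_r,\mathbf{x}_r+\mathbf{e}_r\}$ in the support, so that $f$ decomposes as a convolution with a nondegenerate Bernoulli step; the $n$-fold convolution of those Bernoulli parts furnishes a smoothly varying lattice density, and the integral CLT (condition (c)) supplies the Gaussian profile. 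Both approaches are valid: yours is shorter and standard but leans on Fourier machinery; the Davis--McDonald approach converts aperiodicity directly into a combinatorial smoothing device, which is precisely why condition (a) is phrased in terms of $q_r(f)$ rather than in terms of the zero set of $1-|\phi|$.
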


\medskip

{\color{b}  Condition (a) of Theorem \ref{T:DMCLT} implies that  the  multivariate normal distribution $\mathcal{N}({\bf 0},\,\bSigma)$ is non-degenerate.}
\begin{lemma}
\label{lem:pd}
{\color{b}
Let $f$ be a 
probability mass function on $\ZZ^d$ with  finite mean and covariance matrix $\bSigma\in \RR^{d\times d}$.
Assume condition (a) of Theorem \ref{T:DMCLT} holds. Then $\bSigma$ is positive definite.} 
\end{lemma}
\begin{proof}
Let $X$ and $Y$ be two independent random vectors with distribution $f$. Then the covariance matrix of $X$ can be written as 
$$
\E[(X - \E[X]) (X - \E[X])^T]
= (1/2)\E[(X-Y) (X-Y)^T].
$$ 
{\color{b}Let $\mathbf{x}_r$ be as in condition (a) of Theorem \ref{T:DMCLT}.}
Then for any nonzero vector $\mathbf{z} = (z_1,\ldots,z_d) \neq 0$ with, say, $z_r \neq 0$, we have
\begin{align*}
\mathbf{z}^T \,\E[(X-Y) (X-Y)^T] \,\mathbf{z}
&= \E\big[\big(\mathbf{z}^T (X-Y) \big)^2\big]\\
&\geq f(\mathbf{x}_r) f(\mathbf{x}_r + \mathbf{e}_r) z_r^2\\ 
&> 0,
\end{align*}
where the expression on the second line is the contribution to the expectation from the event that $X = \mathbf{x}_r + \mathbf{e}_r$ and
$Y = \mathbf{x}_r$, and the third line follows {\color{b}from condition (a) of Theorem \ref{T:DMCLT}.} Note that we used that
each term contributing to the expectation is non-negative.
\end{proof}

\medskip

We {\color{b}shall} apply Theorem \ref{T:DMCLT} to  the 
i.i.d.~vectors 
$\{\mathbf{Y}(\ell)\}_{\ell=1}^{\infty} \subset \ZZ_+^d$ 
under $\widetilde{\P}^{(i),\infty}$, for each of $i\in\{1,2\}$.

\subsubsection{Checking conditions of the local CLT}\label{S:Checkabc}

In this section we verify {\color{b}
that the i.i.d.~vectors 
$\{\mathbf{Y}(\ell)\}_{\ell=1}^{\infty} \subset \ZZ_+^d$ satisfy all conditions of Theorem \ref{T:DMCLT}. We also show that they have the same mean under $\widetilde{\P}^{(1),\infty}$ and $\widetilde{\P}^{(2),\infty}$.
}
For this we let
$f^{(i)}$ be the probability mass function of 
$$\mathbf{Y}(1)
\,=\, \left(\tau, Y_A(1), Y_{B'}(1), Y_{C'}(1)\right)\,=\,
\left(\tau,\,N_{\mathcal{H}}^{A}(\tau),\,N_{\mathcal{H}}^{B'}(\tau),\,N_{\mathcal{H}}^{C'}(\tau)\right)$$ 
under $\widetilde{\P}^{(i),\infty}$ for $i\in\{1,2\}$, where  $\tau=\tau_1$ is defined in  \eqref{Def:tau}.


\begin{lemma}[Non-degeneracy]
\label{lem:davis}
{\color{b}
The distributions $f^{(1)}$ and $f^{(2)}$ both satisfy condition (a) of Theorem \ref{T:DMCLT}.}
\end{lemma}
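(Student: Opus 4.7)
My plan is to prove condition \eqref{eq:davis} directly by exhibiting, for each coordinate $r\in\{1,\dots,d\}$, an explicit point $\mathbf{x}_r\in\ZZ_+^d$ together with two explicit sequence realizations at $\{A,B',C'\}$ that achieve $\mathbf{Y}(1)=\mathbf{x}_r$ and $\mathbf{Y}(1)=\mathbf{x}_r+\mathbf{e}_r$, respectively. The enabling observation I will use throughout is that for both $i\in\{1,2\}$, the triples $\vec{X}_n\in\{0,1\}^{3k}$ are i.i.d.\ for $n\geq 2$ under $\widetilde{\P}^{(i),\infty}$ and have full support on $\{0,1\}^{3k}$: a single CF column at the three points $\{A,B',C'\}$ assigns positive mass to each of the $2^3$ joint patterns (since the CF transition probabilities over any positive time lie strictly between $0$ and $1$), and $\vec{X}_n$ is the product of $k$ independent such columns. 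Hence every finite specific realization of $(\vec{X}_2,\dots,\vec{X}_L)$ has positive probability, so to establish $f^{(i)}(\mathbf{x})>0$ it suffices to produce a concrete configuration of sequences realizing $\mathbf{Y}(1)=\mathbf{x}$.

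For the $\tau_1$-coordinate ($r=1$), I will take the common baseline walk $\vec{0},\vec{0},\vec{1},\dots,\vec{1},\vec{0},\vec{0}$ with $L$ interior copies of $\vec{1}$ at all three vertices, and the modified version with $L+1$ interior copies; both yield the same $\mathcal{H}$-counts at each vertex (the added transition $\vec{1}\!\to\!\vec{1}$ lies in $\mathcal{H}^c$), while $\tau_1$ increases by one. For the remaining coordinates, indexed by a vertex $V^*\in\{A,B',C'\}$ and a pair $(y,z)\in\mathcal{H}$, I will keep the two vertices $V\neq V^*$ on the baseline walk, whose purely-$\vec{1}$ interior guarantees $\vec{X}_n\neq\vec{0}$ for all interior $n$ and thereby fixes $\tau_1$ regardless of any perturbation of $V^*$. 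I then perturb only the sequence at $V^*$ by a short local splice chosen case by case: when $z=\vec{1}$, replace an interior block $\vec{1}\vec{1}$ by $y\vec{1}$; when $z=\vec{0}$ and $y\neq\vec{0}$, replace the terminal block $\vec{1}\vec{0}$ by $y\vec{0}$; when $y,z\notin\{\vec{0},\vec{1}\}$, replace an interior $\vec{1}\vec{1}z$ by $\vec{1}yz$; and the edge pairs $(\vec{0},\vec{0})$ and $(\vec{0},\vec{1})$ are handled by inserting one extra interior $\vec{0}$ into the baseline at $V^*$. For each splice a routine verification using Lemmas~\ref{lem:flowconstraints} and \ref{lem:redundant} (and the explicit formulas \eqref{eq:N1z}--\eqref{eq:N11}) shows that exactly one $\mathcal{H}$-coordinate moves by $+1$, with the induced flow and total-count imbalance absorbed entirely into the $\mathcal{H}^c$-counts $N_{\vec{1},\cdot}^{V^*}$.

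The main obstacle will be the sub-case $(y,z)=(\vec{0},z)$ with $z\notin\{\vec{0},\vec{1}\}$: every out-transition from $z$ lies in $\mathcal{H}$ (since $z\neq\vec{1}$), so a naive local swap creating a new in-transition at $z$ forces a second $\mathcal{H}$-coordinate to move, preventing a clean $+\mathbf{e}_r$ shift. The workaround is to exploit that an interior $\vec{0}$ at $V^*$ is allowed by the triple-return condition provided $B'$ and $C'$ are non-$\vec{0}$ at that position. Concretely, I will use the modified walk $\vec{0},\vec{0},\vec{1},\vec{1},\vec{0},z,\vec{1},\vec{0},\vec{0}$ at $V^*$ against the baseline $\vec{0},\vec{0},\vec{1},z,\vec{1},\vec{1},\vec{1},\vec{0},\vec{0}$; a short bookkeeping using \eqref{eq:N1z}--\eqref{eq:N11} shows these differ by exactly $+\mathbf{e}_{(\vec{0},z)}$ in the $\mathcal{H}$-counts and by non-negative updates in the $\mathcal{H}^c$-counts, and I verify it is an Eulerian walk realizing those counts while $B',C'$ remain on the baseline so that $\vec{X}_n\neq\vec{0}$ throughout the interior. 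Combining all sub-cases yields $f^{(i)}(\mathbf{x}_r)\wedge f^{(i)}(\mathbf{x}_r+\mathbf{e}_r)>0$ for every $r$ and both $i\in\{1,2\}$, which is exactly \eqref{eq:davis} and hence condition (a) of Theorem~\ref{T:DMCLT}.
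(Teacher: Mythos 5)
Your plan is sound and arrives at the right conclusion: the enabling observation (full support of the non-overlapping $k$-mer triples under $\widetilde{\P}^{(i),\infty}$) is correct, and your handling of the $\tau_1$-coordinate ($r=1$) matches the paper's. Where you diverge is in the coordinates $r>1$: you split into five sub-cases according to whether $y$ and $z$ equal $\vec{0}$ or $\vec{1}$, and you flag the sub-case $(\vec{0},z)$ with $z\notin\{\vec{0},\vec{1}\}$ as a special obstacle requiring a bespoke nine-step walk. The paper's construction handles all $(a,b)\in\mathcal{H}$ at once with a single uniform splice: take $\mathbf{x}_r$ realized by a walk whose $V^*$-column is $\vec{0},\vec{0},\vec{1},\vec{1},b,\vec{0},\vec{0}$ (other two vertices on the all-$\vec{1}$ interior, so $\tau_1=5$ is locked), and obtain $\mathbf{x}_r+\mathbf{e}_r$ by replacing the middle $\vec{1}$ with $a$ to get $\vec{0},\vec{0},\vec{1},a,b,\vec{0},\vec{0}$. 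The transitions changed are $(\vec{1},\vec{1}),(\vec{1},b)\mapsto(\vec{1},a),(a,b)$; since $\mathcal{H}^c$ is exactly the set of pairs whose first coordinate is $\vec{1}$, the first three are invisible to the $\mathcal{H}$-count and the only visible change is the unit increase in $N^{V^*}_{a,b}$. This sidesteps all of your casework, including your ``main obstacle'': building the target $b$ directly into the baseline and splicing $a$ in place of a preceding $\vec{1}$ dumps the entire flow imbalance into $\mathcal{H}^c$, which is precisely why $\mathcal{H}$ was chosen to exclude the out-transitions from $\vec{1}$. Your version is correct in substance (I checked each sub-case), but the uniform splice is tighter and makes the role of $\mathcal{H}$ transparent. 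One minor imprecision in your write-up: for $(\vec{0},\vec{0})$ and $(\vec{0},\vec{1})$ you describe ``inserting one extra interior $\vec{0}$,'' but a literal insertion changes the walk length at $V^*$ relative to $B',C'$ and perturbs $\tau_1$; what actually works (and is presumably what you intend) is replacing one interior $\vec{1}$ by $\vec{0}$ at a suitably chosen position.
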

\begin{proof}
Fix $i\in\{1,2\}$.
The proof relies crucially on the construction of the set $\mathcal{H}$ in Lemma \ref{lem:redundant}. 
We write a point in $\ZZ_+^d$ as
\[
\mathbf{x}=\left(t,\;(n^{A}_{yz},\,n^{B'}_{yz},\,n^{C'}_{yz})_{yz\in\mathcal{H}}\right), \quad \text{where } t\in \ZZ_+ \text{ and } n^{A}_{yz},\,n^{B'}_{yz},\,n^{C'}_{yz}\in \ZZ_+.
\]

Recall that $\vec{0}$ and $\vec{1}$ refer to the all-0 and all-1 $k$-mers.
A {\color{b}sequence} of {\color{b}adjacent} $k$-mer triples starting and ending with 
$(\vec{0},\vec{0},\vec{0})\, (\vec{0},\vec{0},\vec{0})$ will give rise to a unique point in $\ZZ_+^d$, in which $t$ is the length of the {\color{b}sequence} and $n^{V}_{yz}$ counts the number of $yz$-transitions. 
By the definition of $\mathcal{H}$, we are not counting the transition from $\vec{1}$ to $z$ for any $z\in \{0,1\}^k$.

\medskip
For $r=1$ (corresponding to the $t$-coordinate), 
we consider the $k$-mer triple cycles of
\begin{align*}
\mathcal{C}
= &
(\vec{0},\vec{0},\vec{0}),(\vec{0},\vec{0},\vec{0}),
(\vec{1},\vec{1},\vec{1}),
(\vec{0},\vec{0},\vec{0}),(\vec{0},\vec{0},\vec{0}) \qquad \text{and}\\
\mathcal{C}^+
= &
(\vec{0},\vec{0},\vec{0}),
(\vec{0},\vec{0},\vec{0}), 
(\vec{1},\vec{1},\vec{1}),
(\vec{1},\vec{1},\vec{1}),
(\vec{0},\vec{0},\vec{0}),
(\vec{0},\vec{0},\vec{0}).
\end{align*}
They give rise to $\mathbf{x}_r$ and $\mathbf{x}_r + \mathbf{e}_r$ respectively, where
we take
 $\mathbf{x}_r$ to be the point {\color{b}in $\ZZ_+^d$} such that $t=3$ and 
\begin{equation}\label{x_r}
(n^{A}_{yz},\,n^{B'}_{yz},\,n^{C'}_{yz})=
\begin{cases}
(2,2,2) \quad \text{if } (y,z)=(\vec{0},\vec{0}) \\
(1,1,1) \quad \text{if } (y,z)=(\vec{0},\vec{1}) \\
(0,0,0) \quad \text{if } (y,z)\in \mathcal{H}\setminus \{(\vec{0},\vec{0}),\,(\vec{0},\vec{1})\},
\end{cases}
\end{equation}
and $\mathbf{x}_r + \mathbf{e}_r=\left(4,\;(n^{A}_{yz},\,n^{B'}_{yz},\,n^{C'}_{yz})_{yz\in\mathcal{H}}\right)$. Recall that
\begin{align*}
    \mathcal{H}
    = \left\{
    (y,z) \in \{0,1\}^{k} \times \{0,1\}^{k} : y \neq \vec{1}
    \right\},
\end{align*}
so that, in particular, the transitions 
$(\vec{1},\vec{1})$ are not counted.
Then
\begin{align*}
f^{(i)}(\mathbf{x}_r) \geq  &\;
\widetilde{\P}^{(i),\infty}\left(
(\vec{X}_n)_{n=0}^3=\mathcal{C}
\right) >0\qquad \text{ and}\\
f^{(i)}(\mathbf{x}_r+ \mathbf{e}_r) \geq  &\;
\widetilde{\P}^{(i),\infty}\left(
(\vec{X}_n)_{n=0}^4=\mathcal{C}^+
\right) >0,
\end{align*}
where $\vec{X}_n=(x^A_n,\,x^{B'}_n,\,x^{C'}_n)\in\{0,1\}^{3k}$ as defined in \eqref{Def:triples}.

\medskip
For $r>1$, we first suppose $r$ corresponds to the coordinate $n^A_{ab}$ where $(a,b)\in \mathcal{H}$. 
The cycles
\begin{align*}
\mathcal{C}^A_{ab}
= &
(\vec{0},\vec{0},\vec{0}),
(\vec{0},\vec{0},\vec{0}), 
(\vec{1},\vec{1},\vec{1}),
(\vec{1},\vec{1},\vec{1}),
(b,\vec{1},\vec{1}),
(\vec{0},\vec{0},\vec{0}),
(\vec{0},\vec{0},\vec{0})
 \qquad \text{and}\\
\mathcal{C}^{A+}_{ab}
= &
(\vec{0},\vec{0},\vec{0}),
(\vec{0},\vec{0},\vec{0}), 
(\vec{1},\vec{1},\vec{1}),
(a,\vec{1},\vec{1}),
(b,\vec{1},\vec{1}),
(\vec{0},\vec{0},\vec{0}),
(\vec{0},\vec{0},\vec{0})
\end{align*}
give rise to 
$\mathbf{x}_r$ and $\mathbf{x}_r + \mathbf{e}_r$ respectively, where $\mathbf{x}_r$ is the point on $\ZZ_+^d$ such that $t=5$ and \eqref{x_r} holds. Hence both $f^{(i)}(\mathbf{x}_r)$ and  $f^{(i)}(\mathbf{x}_r+ \mathbf{e}_r)$ are positive, as before.

The proof for coordinates $n^{B'}_{ab}$ is the same, except that we replace $(a,\vec{1},\vec{1})$ by  $(\vec{1},a,\vec{1})$ and  $(b,\vec{1},\vec{1})$ by  $(\vec{1},b,\vec{1})$. The proof for coordinates $n^{C'}_{ab}$ follows similarly. The proof is complete.
\end{proof}

To verify condition (b) of Theorem \ref{T:DMCLT},
{\color{b}
we let $\mathbf{m}^{(i)}$ and $\bSigma^{(i)}$ be respectively the mean and the covariance matrix of $\mathbf{Y}(1)$ under $\widetilde{\P}^{(i),\infty}$. 
We also let $\textbf{S}_{\ell}=\sum_{j=1}^{\ell}\mathbf{Y}(j)$.}
\begin{lemma}
\label{lem:var}
{\color{b}
For $i\in\{1,2\}$, under $\widetilde{\P}^{(i),\infty}$,
the sequence
$\frac{\textbf{S}_{\ell}-\ell\,\mathbf{m}^{(i)}}{\sqrt{\ell}}$  converges in distribution to the multivariate normal distribution $\mathcal{N}({\bf 0},\,\bSigma^{(i)})$ as $\ell \to\infty$.}
\end{lemma}
\begin{proof}
Fix $i\in\{1,2\}$.
Observe that
$\mathbf{Y}(1) \leq 
(\tau_1,\tau_1,\ldots,\tau_1)$ coordinate-wise.
Moreover, by construction, $\tau_1$ is geometric and therefore has finite first and second moments. {\color{b}Hence $\mathbf{m}^{(i)}$ is finite and  $\widetilde{\E}^{(i),\infty}[\|\mathbf{Y}(\ell)\|^2]<\infty$, from which we have that the entries of $\bSigma^{(i)}$ are finite and hence $|\det(\bSigma^{(i)})|<\infty$.} Also $\bSigma^{(i)}$ is positive definite by  Lemmas~\ref{lem:pd} and~\ref{lem:davis}. 
{\color{b}The claim} follows from the multivariate central limit theorem (see, e.g.,~\cite[Section 3.10]{durrett2019probability}).
\end{proof}

Due to symmetry between $T_1$ and $T_2$, as well as the independence of {\color{b}non-overlaping, adjacent} $k$-mers under the CFN model,
the expectations  are the same, as we show formally next. 
\begin{lemma}[Expectation]
\label{lem:expec}
{\color{b}The equality $\mathbf{m}^{(1)}=\mathbf{m}^{(2)}\in \RR_+^d$ holds.}
\end{lemma}

\begin{proof}
By symmetry \eqref{topo_ABC}, we have $\widetilde{\P}^{(1),\infty}_{(\tau,\sigma_A,\sigma_{B'},\sigma_{C'})}=\widetilde{\P}^{(2),\infty}_{(\tau,\sigma_A,\sigma_{C'},\sigma_{B'})}$. 
Hence 
$$\widetilde{\E}^{(1),\infty}[(\tau, N_{\mathcal{H}}^{A}(\tau))]=\widetilde{\E}^{(2),\infty}[(\tau, N_{\mathcal{H}}^{A}(\tau))],$$ and
$$\widetilde{\E}^{(1),\infty}[( N_{\mathcal{H}}^{B'}(\tau), N_{\mathcal{H}}^{C'}(\tau))]=\widetilde{\E}^{(2),\infty}[( N_{\mathcal{H}}^{C'}(\tau), N_{\mathcal{H}}^{B'}(\tau))].$$
It remains to show that
\begin{equation}\label{Symmetry}
  \widetilde{\E}^{(i),\infty}[ N_{\mathcal{H}}^{B'}(\tau)]=\widetilde{\E}^{(i),\infty}[N_{\mathcal{H}}^{C'}(\tau)] \quad \text{for } i\in\{1,2\}.
\end{equation}
{\color{b}While $\widetilde{\P}^{(i),\infty}_{\sigma_{B'}}=\widetilde{\P}^{(i),\infty}_{\sigma_{C'}}$, Eq.~\eqref{Symmetry} is not immediately clear because $\tau$ depends on all three sequences.} 

{\color{b}
Using the notation of Section~\ref{sec:asMC}, for arbitrary $(y,z)\in \{0,1\}^{k}\times \{0,1\}^{k}$, we have
\begin{equation}\label{NrepresentationM}
N_{(y,z)}^{B'}(\tau)  = \sum_{(\vec{y},\vec{z}):\,(y_2,z_2)=(y,z)}
 \sum_{j=0}^{\tau-1} 1_{\vec{M}_j=(\vec{y},\,\vec{z})},
\end{equation}
where the sum is 
over the set of $(\vec{y},\vec{z})$
with $y_2=y$ and $z_2=z$, with $\vec{y}=(y_1,y_2,y_3)\in \{0,1\}^{3k}$ and 
$\vec{z}=(z_1,z_2,z_3)\in \{0,1\}^{3k}$. 
Using standard Markov chain results (e.g.,~\cite[Chapter 5]{durrett2019probability}), 
\begin{equation}\label{Theta2}
\widetilde{\E}^{(i),\infty}\left[ \sum_{j=0}^{\tau-1} 1_{\vec{M}_j=(\vec{y},\,\vec{z})}\right] 
=\widetilde{c}\,\Theta_{\vec{M}}(\vec{y},\vec{z}),
\end{equation}
where $\widetilde{c}=\widetilde{\E}^{(i),\infty}[\tau]\in(0,\infty)$ and the stationary distribution $\Theta_{\vec{M}}(\vec{y},\vec{z})$ was computed in~\eqref{Theta1}.
Combining \eqref{Theta1}, \eqref{NrepresentationM}, and \eqref{Theta2}, we have
\begin{align*}
\widetilde{\E}^{(i),\infty}\left[
N_{y,z}^{B'}(\tau) \right] &=    \widetilde{c}\,\sum_{(\vec{y},\vec{z}):\,(y_2,z_2)=(y,z)}
\widetilde{\P}^{(i),\infty}(\vec{X}_2=\vec{y})\,\widetilde{\P}^{(i),\infty}(\vec{X}_2=\vec{z})\\
&= \widetilde{c}\;\widetilde{\P}^{(i),\infty}(x^{B'}_2=y)\,\widetilde{\P}^{(i),\infty}(x^{B'}_2=z) 
\end{align*}
and, similarly for $C'$,
\begin{align*}
\widetilde{\E}^{(i),\infty}\left[
N_{y,z}^{C'}(\tau) \right] &=    \widetilde{c}\;\widetilde{\P}^{(i),\infty}(x^{C'}_2=y)\,\widetilde{\P}^{(i),\infty}(x^{C'}_2=z).
\end{align*}
The two displayed equations are the same since  $\widetilde{\P}^{(i),\infty}_{\sigma_{B'}}=\widetilde{\P}^{(i),\infty}_{\sigma_{C'}}$.}
\end{proof}

\subsubsection{Applying the local CLT}

By Lemmas~\ref{lem:davis} and~\ref{lem:var}, we can apply  Theorem \ref{T:DMCLT} to  the 
i.i.d.~vectors 
$\{\mathbf{Y}(j)\}_{j=1}^{\infty}$ to obtain the following lower bound. Recall that $\mathbf{m}^{(1)}=\mathbf{m}^{(2)}$ by  Lemma \ref{lem:expec}, and let $\mathbf{m}=\mathbf{m}^{(i)}$.
{\color{b}Recall also that $\textbf{S}_{\ell}=\sum_{j=1}^{\ell}\mathbf{Y}(j)$.}


\begin{lemma}[Uniform lower bound]
\label{lem:local-clt}
There exist constants $c_1,\,c_2\in(0,\infty)$ such that
\begin{equation*}
\inf_{y\in\mathcal{Y}^{(i)}_{\ell}}\widetilde{\P}^{(i),\infty}[\textbf{S}_{\ell} = \mathbf{y}] \geq \frac{c_2}{\ell^{d/2}}
\end{equation*}
for all $\ell\geq c_1$ and $i\in\{1,2\}$, where 
\begin{equation}\label{Def:setYi}
    \mathcal{Y}^{(i)}_{\ell}:=\left\{\mathbf{y}\in \ZZ_+^d:\; \left(\mathbf{y}-\ell\,\mathbf{m}\right)^\mathrm{T}\left(\bSigma^{(i)} \right)^{-1}\,\left(\mathbf{y}-\ell\,\mathbf{m}\right) \leq 2\ell  \right\}.
\end{equation}
\end{lemma}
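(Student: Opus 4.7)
\medskip

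\noindent
\textbf{Proof plan for Lemma~\ref{lem:local-clt}.} The plan is to apply the multivariate local central limit theorem (Theorem~\ref{T:DMCLT}) to the i.i.d.~sequence $\{\mathbf{Y}(\ell)\}_{\ell=1}^{\infty}$ under $\widetilde{\P}^{(i),\infty}$ for each $i\in\{1,2\}$, using that its hypotheses (a), (b), (c) have been established in Lemmas~\ref{lem:davis} and~\ref{lem:var}. The conclusion of Theorem~\ref{T:DMCLT} gives, for each fixed $i$, the uniform convergence
\[
\sup_{\mathbf{y}\in\mathbb{Z}^d}\left|\ell^{d/2}\,\widetilde{\P}^{(i),\infty}[\mathbf{S}_\ell=\mathbf{y}]-\varphi^{(i)}\!\left(\frac{\mathbf{y}-\ell\mathbf{m}}{\sqrt{\ell}}\right)\right|\longrightarrow 0\quad\text{as }\ell\to\infty,
\]
where $\varphi^{(i)}$ is the density of $\mathcal{N}(\mathbf{0},\bSigma^{(i)})$ and we used $\mathbf{m}^{(1)}=\mathbf{m}^{(2)}=\mathbf{m}$ from Lemma~\ref{lem:expec}.

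Next, I will derive a uniform pointwise lower bound for $\varphi^{(i)}$ on the relevant ellipsoid. For any $\mathbf{y}\in\mathcal{Y}^{(i)}_\ell$, writing $\mathbf{z}=(\mathbf{y}-\ell\mathbf{m})/\sqrt{\ell}$, the defining inequality in \eqref{Def:setYi} rescales to $\mathbf{z}^{\mathrm{T}}(\bSigma^{(i)})^{-1}\mathbf{z}\le 2$. Therefore
\[
\varphi^{(i)}(\mathbf{z}) \;=\; \frac{1}{(2\pi)^{d/2}\sqrt{\det\bSigma^{(i)}}}\,\exp\!\left(-\tfrac{1}{2}\mathbf{z}^{\mathrm{T}}(\bSigma^{(i)})^{-1}\mathbf{z}\right) \;\ge\; \frac{e^{-1}}{(2\pi)^{d/2}\sqrt{\det\bSigma^{(i)}}} \;=:\; c_3^{(i)} \;>\;0,
\]
since $\det\bSigma^{(i)}\in(0,\infty)$ by Lemma~\ref{lem:var}. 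Set $c_3=\min\{c_3^{(1)},c_3^{(2)}\}>0$ so that the bound $\varphi^{(i)}(\mathbf{z})\ge c_3$ holds uniformly for $\mathbf{y}\in\mathcal{Y}^{(i)}_\ell$ and both $i\in\{1,2\}$.

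Finally, I will combine the two ingredients. By the local CLT applied to each $i\in\{1,2\}$, there exists $c_1$ (take the maximum of the two thresholds) such that for all $\ell\ge c_1$,
\[
\sup_{\mathbf{y}\in\mathbb{Z}^d}\left|\ell^{d/2}\,\widetilde{\P}^{(i),\infty}[\mathbf{S}_\ell=\mathbf{y}]-\varphi^{(i)}\!\left(\frac{\mathbf{y}-\ell\mathbf{m}}{\sqrt{\ell}}\right)\right|\le \frac{c_3}{2}.
\]
Restricting to $\mathbf{y}\in\mathcal{Y}^{(i)}_\ell$ and applying the reverse triangle inequality together with the pointwise lower bound on $\varphi^{(i)}$ yields
\[
\ell^{d/2}\,\widetilde{\P}^{(i),\infty}[\mathbf{S}_\ell=\mathbf{y}] \;\ge\; \varphi^{(i)}\!\left(\frac{\mathbf{y}-\ell\mathbf{m}}{\sqrt{\ell}}\right)-\frac{c_3}{2} \;\ge\; \frac{c_3}{2},
\]
and setting $c_2=c_3/2$ completes the proof. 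I do not expect any substantive obstacle here: the hypotheses of the local CLT have been verified in previous lemmas, the uniform lower bound on the Gaussian density on the ellipsoid is an immediate calculation, and handling both $i=1,2$ simultaneously only requires taking the worse of two universal constants.
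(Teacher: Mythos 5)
Your proof is correct and follows essentially the same route as the paper: apply the Davis--McDonald local CLT, lower-bound the Gaussian density $\varphi^{(i)}$ by $e^{-1}/((2\pi)^{d/2}\sqrt{\det\bSigma^{(i)}})$ on the ellipsoid $\mathcal{Y}^{(i)}_\ell$, and absorb the uniform error term for $\ell$ large. The only cosmetic difference is that you take $c_3=\min_i c_3^{(i)}$ and set the CLT error threshold to $c_3/2$, whereas the paper works with an arbitrary $\epsilon$ and the constant $\det\bSigma^{(1)}\vee\det\bSigma^{(2)}$; these are equivalent.
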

\begin{proof}

By Theorem \ref{T:DMCLT}, for $i\in\{1,2\}$, as $\ell\to\infty$, 
\begin{align}
    \sup_{\mathbf{y} \in \mathbb{Z}^{d}}\left| \ell^{d/2}\,\widetilde{\P}^{(i),\infty}[\textbf{S}_{\ell} = \mathbf{y}] - \varphi^{(i)}\left(\frac{\mathbf{y}-\ell\,\textbf{m}}{\sqrt{\ell}}\right)\right| \rightarrow 0. 
\end{align}
where  
\[
\varphi^{(i)}(\textbf{x})= \frac{\exp\left\{-\frac 1 2 {\mathbf x}^\mathrm{T}\left(\bSigma^{(i)} \right)^{-1}\,{\mathbf x}\right\}}{\sqrt{(2\pi)^d \,\det(\bSigma^{(i)})}}.
\]

Therefore, for arbitrary $\epsilon > 0$, there exists $\ell_{\epsilon}$ sufficiently large such that  for all integers $\ell\geq \ell_{\epsilon}$ and all $\mathbf{y}\in \ZZ^d$,
\begin{align*}
\widetilde{\P}^{(i),\infty}[\textbf{S}_{\ell} = \mathbf{y}] 
\geq&\, \frac{1}{\ell ^{d/2}}\left(\varphi^{(i)}\left(\frac{\mathbf{y}-\ell\,\textbf{m}}{\sqrt{\ell}}\right) - \epsilon\right)\\
= &\,\frac{1}{\ell ^{d/2}}\left(\frac{\exp\left\{-\frac 1 2 \left(\frac{\mathbf{y}-\ell\,\textbf{m}}{\sqrt{\ell}}\right)^\mathrm{T}\left(\bSigma^{(i)} \right)^{-1}\,\left(\frac{\mathbf{y}-\ell\,\textbf{m}}{\sqrt{\ell}}\right)\right\}}{\sqrt{(2\pi)^d \,\det(\bSigma^{(i)})}}- \epsilon\right).
\end{align*}
The bound in the definition of 
$\mathcal{Y}^{(i)}_{\ell}$  
gives
\begin{equation*}
\inf_{y\in\mathcal{Y}^{(i)}_{\ell}}\widetilde{\P}^{(i),\infty}[\textbf{S}_{\ell} = \mathbf{y}] \geq 
\frac{1}{\ell ^{d/2}}\left(\frac{e^{-1}}{\sqrt{(2\pi)^d \,[\det(\bSigma^{(1)})\vee \det(\bSigma^{(2)})]}}- \epsilon\right)
\end{equation*}
for all $\ell\geq \ell_{\epsilon}$.
The lemma follows by taking $\epsilon$ to be any fixed  number small enough that depends only on $\det(\bSigma^{(1)})\vee \det(\bSigma^{(2)})$.
\end{proof}

Observe that the bound in Lemma~\ref{lem:local-clt} is uniform over the set $ \mathcal{Y}^{(i)}_{\ell}$.
Our use of Lemma~\ref{lem:local-clt} below
will require a lower bound on the size of $ \mathcal{Y}^{(1)}_{\ell}\cap \mathcal{Y}^{(2)}_{\ell}$.
\begin{lemma}\label{L:sizeY}
Let  $\lambda^{(i)}_{min}$ be  the  minimal eigenvalues of  $\bSigma^{(i)}$. Then
\begin{equation}\label{subsetY}
 \left\{\mathbf{y}\in \ZZ_+^d:\,\|\mathbf{y}-\ell\mathbf{m}\|^2\leq 2\ell\,(\lambda^{(1)}_{min} \wedge \lambda^{(2)}_{min}) \right\}\subset   \mathcal{Y}^{(1)}_{\ell}\cap \mathcal{Y}^{(2)}_{\ell},
\end{equation}
where $\{\mathcal{Y}^{(i)}_{\ell}\}_{i=1}^2$ are defined in \eqref{Def:setYi}.
\end{lemma}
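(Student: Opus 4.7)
The plan is to reduce the containment to a simple eigenvalue bound for positive definite matrices. Specifically, I would invoke the spectral theorem for the symmetric positive definite matrix $\bSigma^{(i)}$: if $\lambda^{(i)}_{min}$ denotes its smallest eigenvalue, then the largest eigenvalue of $(\bSigma^{(i)})^{-1}$ is $1/\lambda^{(i)}_{min}$, and therefore for every $\mathbf{u}\in\RR^d$,
\begin{equation*}
\mathbf{u}^\mathrm{T}\,(\bSigma^{(i)})^{-1}\,\mathbf{u} \;\leq\; \frac{1}{\lambda^{(i)}_{min}}\,\|\mathbf{u}\|^2.
\end{equation*}
Note that $\lambda^{(i)}_{min}>0$ by Lemma~\ref{lem:var}, which guarantees $\bSigma^{(i)}$ is positive definite.

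Now suppose $\mathbf{y}\in\ZZ_+^d$ lies in the set on the left-hand side of \eqref{subsetY}, i.e., $\|\mathbf{y}-\ell\mathbf{m}\|^2\leq 2\ell\,(\lambda^{(1)}_{min} \wedge \lambda^{(2)}_{min})$. Then applying the above inequality with $\mathbf{u}=\mathbf{y}-\ell\mathbf{m}$ gives, for each $i\in\{1,2\}$,
\begin{equation*}
(\mathbf{y}-\ell\mathbf{m})^\mathrm{T}\,(\bSigma^{(i)})^{-1}\,(\mathbf{y}-\ell\mathbf{m}) \;\leq\; \frac{\|\mathbf{y}-\ell\mathbf{m}\|^2}{\lambda^{(i)}_{min}} \;\leq\; \frac{2\ell\,(\lambda^{(1)}_{min} \wedge \lambda^{(2)}_{min})}{\lambda^{(i)}_{min}} \;\leq\; 2\ell,
\end{equation*}
which is precisely the condition defining $\mathcal{Y}^{(i)}_{\ell}$ in \eqref{Def:setYi}. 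Since this holds for both $i=1$ and $i=2$, we conclude $\mathbf{y}\in\mathcal{Y}^{(1)}_{\ell}\cap\mathcal{Y}^{(2)}_{\ell}$, establishing the inclusion.

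There is really no difficulty here: once one observes that the quadratic form $\mathbf{u}^\mathrm{T}(\bSigma^{(i)})^{-1}\mathbf{u}$ is controlled above by $\|\mathbf{u}\|^2/\lambda^{(i)}_{min}$, the containment follows immediately. The only hypothesis needed is the positive definiteness of $\bSigma^{(i)}$, which has already been secured in Lemma~\ref{lem:var} (via Lemmas~\ref{lem:davis} and~\ref{lem:pd}).
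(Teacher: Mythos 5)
Your proof is correct and follows essentially the same route as the paper: bound the quadratic form $(\mathbf{y}-\ell\mathbf{m})^\mathrm{T}(\bSigma^{(i)})^{-1}(\mathbf{y}-\ell\mathbf{m})$ above by $\|\mathbf{y}-\ell\mathbf{m}\|^2/\lambda^{(i)}_{min}$ via the eigenvalue relationship between $\bSigma^{(i)}$ and its inverse, then apply the hypothesis on $\mathbf{y}$. The only cosmetic difference is that you spell out the final chain of inequalities and explicitly note $\lambda^{(1)}_{min}\wedge\lambda^{(2)}_{min}\leq\lambda^{(i)}_{min}$, which the paper leaves implicit.
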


\begin{proof}
Note that $0<\lambda^{(i)}_{min}<\infty$ by Lemma \ref{lem:var}. Since $\lambda$ is an eigenvalue of $\bSigma^{(i)}$ if and only if $1/\lambda$ is  an eigenvalue of $\big(\bSigma^{(i)}\big)^{-1}$,
we have
\[
\left(\mathbf{y}-\ell\mathbf{m}\right)^\mathrm{T}\left(\bSigma^{(i)} \right)^{-1}\,\left(\mathbf{y}-\ell\mathbf{m}\right)\leq \frac{1}{\lambda^{(i)}_{min}}\,\|\mathbf{y}-\ell\mathbf{m}\|^2.
\]
This inequality implies \eqref{subsetY}.
\end{proof}

In fact, we will need to control the size of subsets of $\mathcal{Y}^{(1)}_{\ell}\cap \mathcal{Y}^{(2)}_{\ell}$ whose first coordinates are sufficiently close to 
their expectation.
Letting $m_1$ be the first coordinate of $\mathbf{m}$, by  Lemma \ref{lem:expec},  
\begin{equation}\label{Def:m1}
m_1=\widetilde{\E}^{(1),\infty}[\tau_1]=\widetilde{\E}^{(2),\infty}[\tau_1].
\end{equation}
We consider the following set of pairs $(\mu,\ell)$
\begin{equation}\label{Def:Lell}
\mathcal{L}=\left\{(\mu,\ell)\in \mathbb{N}^2:\, |\mu-\ell m_1|\leq c_3 \sqrt{\ell} \right\} \quad \text{where}\quad c_3= \sqrt{\lambda^{(1)}_{min} \wedge \lambda^{(2)}_{min}}.
\end{equation}
The next two lemmas concern bounds on 
the level sets
\[
\mathcal{L}|_{\ell}:=\{\mu\in \mathbb{N}:\,(\mu,\ell)\in \mathcal{L}\} 
\quad\text{and}\quad
\mathcal{L}|_{\mu}:=\{\ell\in \mathbb{N}:\,(\mu,\ell)\in \mathcal{L}\}.
\]

\begin{lemma}\label{L:sizeY2}
Let $\ZZ_{+}^{d}(\mu)$ be the subset of $\ZZ_{+}^{d}$ whose first coordinate is $\mu$. Then
\begin{equation}
    \inf_{\mu\in \mathcal{L}|_{\ell}}\big| \mathcal{Y}^{(1)}_{\ell}\cap \mathcal{Y}^{(2)}_{\ell} \cap \ZZ_{+}^{d}(\mu) \big|\geq c_4\,c_3^{d-1}\,\ell^{(d-1)/2}
\end{equation}
for all $\ell\in\mathbb{N}$, where $c_4\in(0,\infty)$ is a constant that depends only on $d$.
\end{lemma}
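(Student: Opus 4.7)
The plan is to reduce the counting problem to a standard lower bound on lattice points in a Euclidean ball. First, by Lemma \ref{L:sizeY} the intersection $\mathcal{Y}^{(1)}_{\ell}\cap \mathcal{Y}^{(2)}_{\ell}$ contains the Euclidean ball $B_\ell := \{\mathbf{y}\in \ZZ_+^d:\,\|\mathbf{y}-\ell\mathbf{m}\|^2\leq 2\ell c_3^2\}$, so it suffices to lower bound $|B_\ell \cap \ZZ_{+}^{d}(\mu)|$ for each $\mu\in\mathcal{L}|_\ell$. For such $\mu$ the defining inequality of $\mathcal{L}|_\ell$ gives $(\mu-\ell m_1)^2\le c_3^2\ell$, so by Pythagoras a point $(\mu,y_2,\ldots,y_d)\in\ZZ_+^d(\mu)$ lies in $B_\ell$ as soon as
\[
\sum_{j=2}^{d}(y_j-\ell m_j)^2 \;\leq\; 2\ell c_3^2-(\mu-\ell m_1)^2,
\]
and the right side is at least $\ell c_3^2$. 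Hence $B_\ell\cap\ZZ_+^d(\mu)$ contains every integer point $(y_2,\ldots,y_d)\in\ZZ_+^{d-1}$ inside a closed $(d-1)$-dimensional Euclidean ball of radius $r_\ell := c_3\sqrt{\ell}$ centered at $(\ell m_2,\ldots,\ell m_d)$.

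The next step is a purely geometric lattice-point count: a standard volume/packing argument (covering $\RR^{d-1}$ by unit cubes centered at integer points and intersecting with the ball) yields a constant $\kappa_{d-1}>0$ depending only on $d$ such that every Euclidean ball of radius $r\ge \sqrt{d-1}$ in $\RR^{d-1}$ contains at least $\kappa_{d-1}\,r^{d-1}$ points of $\ZZ^{d-1}$. Applied to $r=r_\ell$, this gives $\kappa_{d-1}\,c_3^{d-1}\,\ell^{(d-1)/2}$ lattice points once $c_3\sqrt{\ell}\ge\sqrt{d-1}$, which is exactly the desired order of magnitude.

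The one subtle point is that we need these lattice points to lie in $\ZZ_+^{d-1}$, not merely $\ZZ^{d-1}$. For this I claim that every coordinate of $\mathbf{m}$ is strictly positive. Indeed, for each $r\in\{1,\ldots,d\}$, Lemma \ref{lem:davis} provides $\mathbf{x}_r\in\ZZ_+^d$ with $f^{(i)}(\mathbf{x}_r+\mathbf{e}_r)>0$; since the $r$-th coordinate of $\mathbf{x}_r+\mathbf{e}_r$ is at least $1$, we get $m_r=\widetilde{\E}^{(i),\infty}[Y_r(1)]\ge f^{(i)}(\mathbf{x}_r+\mathbf{e}_r)>0$. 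Letting $m_{\min}:=\min_{j\ge 2}m_j>0$, for all $\ell$ large enough that $\ell m_{\min}>c_3\sqrt{\ell}+\sqrt{d-1}$, the entire Euclidean ball of radius $r_\ell$ centered at $(\ell m_2,\ldots,\ell m_d)$ lies in $\RR_{>0}^{d-1}$, so its integer points are automatically in $\ZZ_+^{d-1}$.

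The main obstacle is really just bookkeeping: both the counting bound and the positivity enforcement require $\ell$ to exceed some threshold $\ell_0$ depending on $d$, $\mathbf{m}$, and $c_3$. For the finitely many $\ell<\ell_0$, the right-hand side $c_4\,c_3^{d-1}\,\ell^{(d-1)/2}$ is bounded by $c_4\,c_3^{d-1}\,\ell_0^{(d-1)/2}$; the bound then follows either vacuously (if $\mathcal{L}|_\ell=\emptyset$, the infimum is $+\infty$ by the standing convention) or after shrinking $c_4$ so that this residual constant is dominated by the smallest non-empty value of $|B_\ell\cap\ZZ_+^d(\mu)|$ over the finitely many exceptional $(\ell,\mu)$. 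Choosing $c_4$ as the minimum of $\kappa_{d-1}/2^{d-1}$ (which handles the regime $\ell\ge\ell_0$) and this residual constant gives a single universal $c_4\in(0,\infty)$ depending only on $d$.
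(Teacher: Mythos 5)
Your argument follows the same overall plan as the paper's: pass from the ellipsoid intersection to a Euclidean ball via Lemma~\ref{L:sizeY}, slice at $y_1=\mu$, use the definition of $\mathcal{L}|_{\ell}$ to bound the radius of the $(d-1)$-dimensional cross-section below by $c_3\sqrt{\ell}$, and then count lattice points. The one genuine difference is in how the lattice-point count and the non-negativity constraint interact. The paper simply inscribes an axis-aligned cube with one \emph{corner} at the center $(\ell m_2,\ldots,\ell m_d)$, extending in the positive coordinate directions; because $m_j\geq 0$ (Lemma~\ref{lem:expec}), this cube lies in $\RR_+^{d-1}$ automatically and the count is immediate. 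You instead count $\ZZ^{d-1}$-points in the whole ball and then argue the ball sits in $\RR_{>0}^{d-1}$, which forces you to prove strict positivity $m_j>0$ (your appeal to Lemma~\ref{lem:davis} for this is correct, but unnecessary) and to take $\ell$ large. The corner-cube trick sidesteps both complications, which is why the paper gets away with only non-negativity of $\mathbf{m}$.

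Two smaller points. First, your small-$\ell$ patch is not quite watertight: if for some exceptional pair $(\ell,\mu)$ with $\mu\in\mathcal{L}|_{\ell}$ the set $\mathcal{Y}^{(1)}_{\ell}\cap \mathcal{Y}^{(2)}_{\ell}\cap\ZZ_{+}^{d}(\mu)$ is actually empty, then the infimum on the left is $0$ and no positive choice of $c_4$ can rescue the inequality; "shrinking $c_4$" only works when the exceptional values are all $\geq 1$. (The paper's own proof is also silent on this; in practice the lemma is only invoked for $\ell\geq c_1$ in the final argument, so the clean fix is to restrict the statement to $\ell$ large, which is all that is used.) Second, your constant $c_4$ ends up depending on $\mathbf{m}$ and $c_3$ through $\ell_0$ and the residual minimum, not only on $d$ as the lemma requires; the paper's cube argument keeps $c_4$ a function of $d$ alone, because the cube-count constant does not see the center.
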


\begin{proof}
By Lemma~\ref{L:sizeY}, the set $\mathcal{Y}^{(1)}_{\ell}\cap \mathcal{Y}^{(2)}_{\ell}$ contains 
all integer points of 
$$
B_d(\ell\mathbf{m},\,c_3\sqrt{2\ell}) \cap \mathbb{R}_+^d,
$$
where $B_d(x,r):=\{y\in\mathbb{R}^d:\,\|y-x\|_{\mathbb{R}^d}\leq r\}$ is the 
$d$-dimensional Euclidean ball with center $x$ and radius $r$. By Lemma \ref{lem:expec},
$\mathbf{m}=(m_1,m_2,\cdots,m_d)\in \RR_+^d$.
Hence  $\mathcal{Y}^{(1)}_{\ell}\cap \mathcal{Y}^{(2)}_{\ell} \cap \ZZ_{+}^{d}(\mu)$ contains all integer points of
$\widetilde{B}(r_{\mu})\cap \mathbb{R}_+^d$, where 
\begin{align*}
\widetilde{B}(r_{\mu}):=  \{(\mu,\,y_2,\cdots,y_d)\in \mathbb{R}^d:\,\|(y_2,\cdots,y_d)-\ell(m_2,\cdots, m_d)\|_{\mathbb{R}^{d-1}}\leq r_{\mu}\} 
\end{align*}
with
\begin{equation}\label{rmu}
    r_{\mu}:=\sqrt{c_3^22\ell-(\mu-\ell m_1)^2}\geq c_3\sqrt{\ell}.
\end{equation}
The last inequality follows whenever $(\mu,\ell)\in \mathcal{L}$.

Since $\{m_2,\cdots,m_d\}$ are all non-negative, $\widetilde{B}(r_{\mu})\cap \mathbb{R}_+^d$ contains a $(d-1)$-dimensional cube with side length $\frac{r_{\mu}}{\sqrt{d-1}}\geq \frac{c_3\sqrt{\ell}}{\sqrt{d-1}}$ by \eqref{rmu}. This cube contains at least $c_4c_3^{d-1}\ell^{(d-1)/2}$ many integer points for some  $c_4\in (0,\infty)$ that depends only on $d$, uniformly for all $(\mu,\ell)\in \mathcal{L}$.
\end{proof}

Finally, the following lemma 
gives a lower bound on the cardinality of 
$\mathcal{L}|_{\mu}$.
\begin{lemma}
\label{lem:size-mu}
There exists a constant $c_5\in (0,\infty)$ that depend only on $m_1$ and $c_3$ such that
for $\mu$ large enough,
$$
\left[\frac{\mu}{m_1} - c_5 \sqrt{\mu},\; \frac{\mu}{m_1} + c_5 \sqrt{\mu} \right]
\subseteq \mathcal{L}|_{\mu},
$$
where $m_1=\widetilde{\E}^{(1),\infty}[\tau_1]=\widetilde{\E}^{(2),\infty}[\tau_1]$.
\end{lemma}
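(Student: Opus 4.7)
The plan is to unwind the definition of $\mathcal{L}$ and reduce the inclusion to an elementary inequality in $\mu$ and $\ell$. By definition, an integer $\ell$ lies in $\mathcal{L}|_\mu$ precisely when $|\mu - \ell m_1| \le c_3\sqrt{\ell}$. Rewriting the hypothesis $\ell \in [\mu/m_1 - c_5\sqrt\mu, \mu/m_1 + c_5\sqrt\mu]$ in the form $|\mu - \ell m_1| \le c_5 m_1\sqrt\mu$, the task becomes showing that one can choose $c_5$ (depending only on $m_1$ and $c_3$) so that
\[
c_5 m_1\sqrt\mu \;\le\; c_3\sqrt\ell \qquad \text{whenever } |\ell - \mu/m_1| \le c_5\sqrt\mu,
\]
for all sufficiently large $\mu$.

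The main step is then to compare $\sqrt\ell$ with $\sqrt\mu$. Since $\ell \ge \mu/m_1 - c_5\sqrt\mu$, for $\mu$ larger than some threshold depending on $m_1$ and $c_5$ we have $\ell \ge \mu/(2 m_1)$, hence $\sqrt\ell \ge \sqrt{\mu/(2 m_1)}$. Substituting this into the desired inequality reduces it to
\[
c_5 m_1 \;\le\; \frac{c_3}{\sqrt{2 m_1}},
\]
which is a purely algebraic condition on $c_5$. Choosing, e.g., $c_5 = c_3 /(2 m_1^{3/2}\sqrt{2})$ satisfies this with room to spare, and depends only on $m_1$ and $c_3$ as required.

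Finally, one notes that $m_1 \in (0,\infty)$ by Lemma~\ref{lem:var} (finite second moment, hence finite first moment) together with the fact that $\tau_1$ is at least $1$, so the definition \eqref{Def:m1} makes sense and the bounds above are non-trivial. The largeness of $\mu$ is used only once, to guarantee $\ell \ge \mu/(2m_1)$ across the whole interval, so a single threshold $\mu_0 = \mu_0(c_3,m_1)$ works uniformly. No serious obstacle arises: the lemma is essentially a bookkeeping statement that a $\sqrt\mu$-window around $\mu/m_1$ is compatible with a $\sqrt\ell$-window around $\ell m_1$, and the only care needed is to fix constants in the right order (pick $c_5$ first to satisfy the algebraic inequality, then choose $\mu$ large enough to justify the comparison $\sqrt\ell \gtrsim \sqrt{\mu/m_1}$).
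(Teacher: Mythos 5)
Your proof is correct, and it takes a cleaner route than the paper's. You rewrite the membership condition $\ell \in [\mu/m_1 - c_5\sqrt\mu,\ \mu/m_1 + c_5\sqrt\mu]$ as $|\mu - \ell m_1| \le c_5 m_1\sqrt\mu$ and then convert $\sqrt\mu$ into $\sqrt\ell$ via the crude lower bound $\ell \ge \mu/(2m_1)$ (valid once $\mu$ is large enough that $\mu/(2m_1) \ge c_5\sqrt\mu$, i.e.\ $\mu \ge 4m_1^2c_5^2$, a threshold depending only on $c_3,m_1$ after $c_5$ is fixed). This reduces everything to the single algebraic condition $c_5 m_1 \le c_3/\sqrt{2m_1}$, which determines an explicit admissible $c_5$. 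The paper instead solves the quadratic inequality $\ell \ge \mu/m_1 - c_5\sqrt\mu$ explicitly for $\sqrt\mu$, squares, and bounds the resulting expression $\ell m_1 + \tfrac{1}{4}(c_5m_1)^2 + \tfrac{1}{2}c_5m_1\sqrt{(c_5m_1)^2+4m_1\ell}$ by $\ell m_1 + c_3\sqrt\ell$ for $c_5$ small; this works for all $\ell \ge 1$ without a largeness assumption, at the cost of more bookkeeping. Your version trades that uniformity for simplicity, which is harmless here since the lemma already allows ``$\mu$ large enough.'' The logical order of choices (fix $c_5$ from $c_3,m_1$ first, then fix the threshold $\mu_0$) is handled correctly in your write-up.
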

\begin{proof}
Suppose $\ell$ belongs to the interval on the left-hand side of the display in the statement of the lemma. Then $\ell\geq  \frac{\mu}{m_1} - c_5 \sqrt{\mu}$. Solving this quadratic inequality in $\sqrt{\mu}$ and then {\color{b}squaring gives}
\begin{align*}
\sqrt{\mu} \leq\, & \frac{c_5m_1+\sqrt{(c_5m_1)^2+4m_1\ell}}{2},
\end{align*}
and
\begin{align*}
\mu \leq\, & \ell m_1 + {\color{b}\frac{1}{2} (c_5m_1)^2} + \frac{c_5 m_1 \sqrt{(c_5m_1)^2+4m_1\ell}}{2}.
\end{align*}
From the last inequality, we see that $\mu\leq \ell m_1 +c_3 \sqrt{\ell}$ for all $\ell \geq 1$, provided that $c_5\in (0,\infty)$ is small enough (depending only on $c_3$ and $m_1$).

Similarly, by solving the inequality  $\ell\leq \frac{\mu}{m_1} + c_5 \sqrt{\mu}$ to yield
\begin{align*}
\sqrt{\mu} \geq\, & \frac{-c_5m_1+\sqrt{(c_5m_1)^2+4m_1\ell}}{2},
\end{align*}
and
\begin{align*}
\mu \geq\, & 
\ell m_1 + {\color{b}\frac{1}{2} (c_5m_1)^2} -\frac{ c_5m_1\sqrt{(c_5m_1)^2+4m_1\ell}}{2}.
\end{align*}
{\color{b}For sufficiently small $c_5\in (0,\infty)$ } (depending only on $c_3$ and $m_1$), we have
$\mu\geq \ell m_1 -c_3 \sqrt{\ell}$.

The desired subset relation is obtained.
\end{proof}

\subsection{Final bound on the total variation distance}\label{S:Final}

\begin{proof}[Proof of Theorem \ref{T:main}]

Now we finish the proof of Theorem \ref{T:main} by establishing \eqref{eq:pointwise-ztilde}. That is, we now show that
\begin{align}
\label{eq:pointwise-ztilde2}
{\color{b}\liminf_{\mu\to\infty}}\sum_{(z'_A, z'_{B'}, z'_{C'})\in (\mathcal{S}^{\mu}_0)^3}
\widetilde{\P}^{(1)}_{Z'_A, Z'_{B'}, Z'_{C'}}(z'_A, z'_{B'}, z'_{C'})\land \widetilde{\P}^{(2)}_{Z'_A, Z'_{B'}, Z'_{C'}}(z'_A, z'_{B'}, z'_{C'}) \;>\; 0,
\end{align}
where  $\mathcal{S}^{\mu}_0:=
\{(\vec{0},\vec{0})\}\times\{0,1\}^{2k} \times \{0,1,\cdots,\mu\}^{\mathcal{H}}$ and $m=(\mu+1)k$.
We further restrict the last pair of triples by considering $\mathcal{S}^{\mu}_{00}:=\{(\vec{0},\vec{0})\}\times \{(\vec{0},\vec{0})\} \times \{0,1,\cdots,\mu\}^{\mathcal{H}}$.
Since $\mathcal{S}^{\mu}_{00}\subset \mathcal{S}^{\mu}_0$, the sum $\sum_{(z'_A, z'_{B'}, z'_{C'})\in (\mathcal{S}^{\mu}_0)^3}$ on the left of 
\eqref{eq:pointwise-ztilde2} is bounded below by
\begin{align}
{\color{b}W_{00}=}\sum_{(z'_A, z'_{B'}, z'_{C'})\in (\mathcal{S}^{\mu}_{00})^3}
\widetilde{\P}^{(1)}_{Z'_A, Z'_{B'}, Z'_{C'}}(z'_A, z'_{B'}, z'_{C'})\land \widetilde{\P}^{(2)}_{Z'_A, Z'_{B'}, Z'_{C'}}(z'_A, z'_{B'}, z'_{C'}). \label{ZtoY1}
\end{align} 

As an element of $\mathcal{S}^{\mu}_{00}$,  $z'_V=\big((\vec{0},\vec{0}),\,(\vec{0},\vec{0}),\, N_V'\big)$ for some $N_V'\in\{0,1,\cdots,\mu\}^{\mathcal{H}}$, where $V\in\{A,B' ,C'\}$. Hence
\begin{align}
&\widetilde{\P}^{(i)}_{Z'_A, Z'_{B'}, Z'_{C'}}(z'_A, z'_{B'}, z'_{C'}) \notag\\
&=\sum_{\ell=1}^{\mu}
\widetilde{\P}^{(i)}\left\{
\left(N_{\mathcal{H}}^{\sigma_A}(\tau_{\ell})
,\, N_{\mathcal{H}}^{\sigma_{B'}}(\tau_{\ell})
,\, N_{\mathcal{H}}^{\sigma_{C'}}(\tau_{\ell})
\right)
=(N'_A, N'_{B'}, N'_{C'})\;,\; \tau_{\ell}=\mu \right\} \notag\\
&=\sum_{\ell=1}^{\mu}
\widetilde{\P}^{(i),\infty}\left\{
\left(N_{\mathcal{H}}^{\sigma_A}(\tau_{\ell})
,\, N_{\mathcal{H}}^{\sigma_{B'}}(\tau_{\ell})
,\, N_{\mathcal{H}}^{\sigma_{C'}}(\tau_{\ell})
\right)
=(N'_A, N'_{B'}, N'_{C'})\;,\; \tau_{\ell}=\mu \right\} \notag\\
&= \sum_{\ell=1}^{\mu}
\widetilde{\P}^{(i),\infty}\left\{
\sum_{j =1}^\ell \mathbf{Y}(j)=(\mu,\,N'_A, N'_{B'}, N'_{C'})
\right\}, \notag
\end{align}
where the second and the last equalities follow from Lemma \ref{L:minfinity} and \eqref{partialsumY}
respectively. Therefore, 
\begin{align}
{\color{b}W_{00}}\geq\,& \sum_{\ell =1}^{\mu} \sum_{\mathbf{y} \in \ZZ_{+}^{d}(\mu)}
\widetilde{\P}^{(1),\infty}\left\{
\sum_{j =1}^\ell \mathbf{Y}(j)=\mathbf{y}
\right\}\land \widetilde{\P}^{(2),\infty}\left\{
\sum_{j =1}^\ell \mathbf{Y}(j)=\mathbf{y}
\right\} \label{FinalBound1},
\end{align} 
{\color{b}where recall that $\ZZ_{+}^{d}(\mu)$ was defined in Lemma~\ref{L:sizeY2}.}

We further restrict the sums to be over 
$(\mu,\,N'_A, N'_{B'}, N'_{C'})\in \mathcal{Y}^{(1)}_{\ell}\cap\mathcal{Y}^{(2)}_{\ell}$ and $\ell \geq c_1$, where recall that $\mathcal{Y}^{(i)}_{\ell}$ and $c_1$ were defined in Lemma 
\ref{lem:local-clt}.
We obtain from Lemma 
\ref{lem:local-clt} that 
 the right-hand side of \eqref{FinalBound1}  is
\begin{align*}
&\geq \,\sum_{\ell\in [c_1,\,\mu]\cap \ZZ_+ }\; \sum_{\mathbf{y} \in \mathcal{Y}^{(1)}_{\ell}\cap\mathcal{Y}^{(2)}_{\ell} \cap \ZZ_{+}^{d}(\mu) }  \frac{c_2}{\ell^{d/2}}\\ 
&= c_2\sum_{\ell\in [c_1,\,\mu]\cap \ZZ_+ } \frac{\big|\mathcal{Y}^{(1)}_{\ell}\cap\mathcal{Y}^{(2)}_{\ell} \cap \ZZ_{+}^{d}(\mu)\big|}{\ell^{d/2}}\\
&\geq c_2\,c_3^{d-1}\,c_4\,\sum_{\ell\in [c_1,\,\mu]\cap \mathcal{L}|_{\mu} }\frac{1}{\ell^{1/2}},
\end{align*}
where the last inequality follows from Lemma \ref{L:sizeY2} and the fact that $\ell\in \mathcal{L}|_{\mu}$ if and only if  $\mu\in \mathcal{L}|_{\ell}$.
Now by Lemma \ref{lem:size-mu} and the fact that $m_1\geq 1$ (recall that
 $m_1=\widetilde{\E}^{(1),\infty}[\tau_1]=\widetilde{\E}^{(2),\infty}[\tau_1]$), we have
for $\mu$ large enough that
\begin{align*}
\sum_{\ell\in [c_1,\,\mu]\cap \mathcal{L}|_{\mu} }\frac{1}{\ell^{1/2}}
&\geq  
\sum_{\ell\in [c_1,\,\mu]\cap \left[\frac{\mu}{m_1} - c_5 \sqrt{\mu},\; \frac{\mu}{m_1} + c_5 \sqrt{\mu} \right] }\frac{1}{\ell^{1/2}}\\
&=  \sum_{\ell\in  \left[\frac{\mu}{m_1} - c_5 \sqrt{\mu},\; \frac{\mu}{m_1} + c_5 \sqrt{\mu} \right] }\frac{1}{\ell^{1/2}}\\
&\geq 
\frac{2 c_5\sqrt{\mu} -1}{\sqrt{\frac{\mu}{m_1} + c_5 \sqrt{\mu}}},
\end{align*}
which tends to $2c_5\sqrt{m_1}>0$ as $\mu\to\infty$.
We finally obtain \eqref{eq:pointwise-ztilde2} by combining the last display with \eqref{FinalBound1}.
\end{proof}

\section{Concluding remarks}\label{S:Conclude}


{\color{b}
Our main result, Theorem \ref{T:main}, suggests that to develop statistically consistent $k$-mer-based methods under a standard model such as the CFN model, one cannot simply fix a $k$ and use the $k$-mer frequencies of the entire leaf sequences.  Instead, one has to look for more elaborate methods, such as block decomposition.

Another possible approach to achieve consistency is to allow $k$ to increase with the sequence length $m$. It is an interesting open problem to determine the \textit{smallest} growth rate of $k=k_m$ as a function of $m$ for which consistency becomes possible (without recourse to block techniques). 
By standard phylogenetic reconstruction results for distance-based and likelihood-based methods (see, e.g.,~\cite{chang_full_1996,roychoudhury_consistency_2015}), statistical consistency is possible in the extreme case where $k_m=m$ (i.e., when the full sequence is observed). Formally, by the reconstruction upper bound \cite[Lemma 3.2]{fan2018}, it follows that
\begin{equation}\label{keqm}
\limsup_{m \to\infty }
\|\mathcal{L}^{(1)}_m - \mathcal{L}^{(2)}_m\|_\tv = 1 \quad\text{if }k_m=m.
\end{equation}
Hence the remaining question is:
Is there a sequence $\{k_m\}$ such that,
say, $\lim_{m\to\infty}\frac{k_m}{m} < 1$ or even $\lim_{m\to\infty}\frac{k_m}{m} = 0$, and
such that we also have $
\limsup_{m \to\infty }
\|\mathcal{L}^{(1)}_m - \mathcal{L}^{(2)}_m\|_\tv = 1 ?$
A key challenge to extend our proof 
is that, in our use of the local 
CLT, we must also control the convergence rate in terms of the dimension $d=1+3(2^{2k}-2^k)$. 

We focused exclusively on the two-state symmetric model of single-site substitution. We conjecture that the techniques developed here can be used to analyze more complex substitution models as well (e.g., the four-state Jukes-Cantor model).
Another open question is to show that our results hold under models of insertions and deletions, such as the TKF91 model~\cite{Thorne1991}. See~\cite{fan2020impossibility} for related results regarding estimators based on the sequence length alone.

}



\section*{Acknowledgments}

SR was supported by NSF grants DMS-1614242, CCF-1740707 (TRIPODS), DMS-1902892, DMS-1916378 and MS-2023239 (TRIPODS Phase II), as well as a Simons Fellowship and a Vilas Associates Award. BL was supported by NSF grants DMS-1614242, CCF-1740707 (TRIPODS), DMS-1902892 (to SR). WTF was supported by NSF grants DMS-1614242 (to SR), DMS-1855417 and ONR grant TCRI N00014-20-1-2411.

\bibliographystyle{alpha}
\bibliography{lengthbib}

\newcommand{\etalchar}[1]{$^{#1}$}
\begin{thebibliography}{LHTH{\etalchar{+}}19}

\bibitem[ARS15]{Allman2015StatisticallyCK}
Elizabeth~S. Allman, John~A. Rhodes, and Seth Sullivant.
\newblock Statistically consistent k-mer methods for phylogenetic tree
  reconstruction.
\newblock {\em Journal of Computational Biology}, 24(2):153--171, 2015.

\bibitem[BAP05]{baik_phase_2005}
Jinho Baik, Gérard~Ben Arous, and Sandrine Péché.
\newblock Phase transition of the largest eigenvalue for nonnull complex sample
  covariance matrices.
\newblock {\em Annals of Probability}, 33(5):1643--1697, September 2005.
\newblock Publisher: Institute of Mathematical Statistics.

\bibitem[BC01]{barbour_compound_2001}
Andrew~D. Barbour and Ourania Chryssaphinou.
\newblock Compound {Poisson} {Approximation}: {A} {User}'s {Guide}.
\newblock {\em The Annals of Applied Probability}, 11(3):964--1002, 2001.
\newblock Publisher: Institute of Mathematical Statistics.

\bibitem[Cav78]{cavender_taxonomy_1978}
James~A. Cavender.
\newblock Taxonomy with confidence.
\newblock {\em Mathematical Biosciences}, 40(3-4):271--280, August 1978.

\bibitem[Cha96]{chang_full_1996}
Joseph~T. Chang.
\newblock Full reconstruction of {Markov} models on evolutionary trees:
  {Identifiability} and consistency.
\newblock {\em Mathematical Biosciences}, 137(1):51--73, October 1996.

\bibitem[CP18]{compeau_bioinformatics_2018}
Phillip Compeau and Pavel Pevzner.
\newblock {\em Bioinformatics {Algorithms}: {An} {Active} {Learning}
  {Approach}}.
\newblock Active Learning Publishers, 2018.

\bibitem[Cul61]{culanovski1961twenty}
I.V. Culanovski.
\newblock {\em Twenty-Five Papers on Statistics and Probability}.
\newblock Selected Translations in Mathematical Statistics and Probability
  Series. American Mathematical Society, 1961.

\bibitem[DEKM98]{durbin_biological_1998}
Richard Durbin, Sean~R. Eddy, Anders Krogh, and Graeme Mitchison.
\newblock {\em Biological {Sequence} {Analysis}: {Probabilistic} {Models} of
  {Proteins} and {Nucleic} {Acids}}.
\newblock Cambridge University Press, April 1998.

\bibitem[DM95]{Davis1995}
Burgess Davis and David McDonald.
\newblock An elementary proof of the local central limit theorem.
\newblock {\em Journal of Theoretical Probability}, 8(3):693--701, July 1995.

\bibitem[DR13]{daskalakis2013alignment}
Constantinos Daskalakis and Sebastien Roch.
\newblock Alignment-free phylogenetic reconstruction: sample complexity via a
  branching process analysis.
\newblock {\em Ann. Appl. Probab.}, 23(2):693--721, 2013.

\bibitem[DS19]{durden_identifiability_2019}
Chris Durden and Seth Sullivant.
\newblock Identifiability of {Phylogenetic} {Parameters} from k-mer {Data}
  {Under} the {Coalescent}.
\newblock {\em Bulletin of Mathematical Biology}, 81(2):431--451, February
  2019.

\bibitem[Dur19]{durrett2019probability}
Rick Durrett.
\newblock {\em Probability: Theory and Examples}, volume~49.
\newblock Cambridge University Press, 2019.

\bibitem[EKPS00]{evans_broadcasting_2000}
William Evans, Claire Kenyon, Yuval Peres, and Leonard~J. Schulman.
\newblock Broadcasting on {Trees} and the {Ising} {Model}.
\newblock {\em The Annals of Applied Probability}, 10(2):410--433, 2000.
\newblock Publisher: Institute of Mathematical Statistics.

\bibitem[ESSW99]{erdos_few_1999}
Péter~L. Erdős, Michael~A. Steel, László~A. Székely, and Tandy~J. Warnow.
\newblock A few logs suffice to build (almost) all trees ({I}).
\newblock {\em Random Structures \& Algorithms}, 14(2):153--184, 1999.

\bibitem[Far73]{farris_probability_1973}
James~S. Farris.
\newblock A {Probability} {Model} for {Inferring} {Evolutionary} {Trees}.
\newblock {\em Systematic Biology}, 22(3):250--256, September 1973.
\newblock Publisher: Oxford Academic.

\bibitem[FISGC15]{fan2015assembly}
Huan Fan, Anthony~R. Ives, Yann Surget-Groba, and Charles~H. Cannon.
\newblock An assembly and alignment-free method of phylogeny reconstruction
  from next-generation sequencing data.
\newblock {\em BMC Genomics}, 16(1):522, 2015.

\bibitem[FLR20]{fan2020impossibility}
Wai-Tong~Louis Fan, Brandon Legried, and Sebastien Roch.
\newblock Impossibility of {Consistent} {Distance} {Estimation} from {Sequence}
  {Lengths} {Under} the {TKF91} {Model}.
\newblock {\em Bulletin of Mathematical Biology}, 82(9):123, September 2020.

\bibitem[FR18]{fan2018}
Wai-Tong~Louis Fan and Sebastien Roch.
\newblock Necessary and sufficient conditions for consistent root
  reconstruction in {M}arkov models on trees.
\newblock {\em Electron. J. Probab.}, 23:24 pp., 2018.

\bibitem[Gus97]{gusfield_algorithms_1997}
Dan Gusfield.
\newblock {\em Algorithms on {Strings}, {Trees}, and {Sequences}: {Computer}
  {Science} and {Computational} {Biology}}.
\newblock Cambridge University Press, May 1997.

\bibitem[GZ19]{ganesh_optimal_2019}
Arun Ganesh and Qiuyi~(Richard) Zhang.
\newblock Optimal sequence length requirements for phylogenetic tree
  reconstruction with indels.
\newblock In {\em Proceedings of the 51st {Annual} {ACM} {SIGACT} {Symposium}
  on {Theory} of {Computing}}, {STOC} 2019, pages 721--732, New York, NY, USA,
  June 2019. Association for Computing Machinery.

\bibitem[Hau14]{haubold2014alignment}
Bernhard Haubold.
\newblock Alignment-free phylogenetics and population genetics.
\newblock {\em Briefings in bioinformatics}, 15(3):407--418, 2014.

\bibitem[HKP15]{haubold2015andi}
Bernhard Haubold, Fabian Kl{\"o}tzl, and Peter Pfaffelhuber.
\newblock andi: Fast and accurate estimation of evolutionary distances between
  closely related genomes.
\newblock {\em Bioinformatics}, 31(8):1169--1175, 2015.

\bibitem[KA90]{karlin_methods_1990}
Samuel Karlin and Stephen~F. Altschul.
\newblock Methods for assessing the statistical significance of molecular
  sequence features by using general scoring schemes.
\newblock {\em Proceedings of the National Academy of Sciences},
  87(6):2264--2268, March 1990.
\newblock Publisher: National Academy of Sciences Section: Research Article.

\bibitem[LHTH{\etalchar{+}}19]{lees2019fast}
John~A. Lees, Simon~R. Harris, Gerry Tonkin-Hill, Rebecca~A. Gladstone,
  Stephanie~W. Lo, Jeffrey~N. Weiser, Jukka Corander, Stephen~D. Bentley, and
  Nicholas~J. Croucher.
\newblock Fast and flexible bacterial genomic epidemiology with poppunk.
\newblock {\em Genome research}, 29(2):304--316, 2019.

\bibitem[LHW02]{lippert_distributional_2002}
Ross~A. Lippert, Haiyan Huang, and Michael~S. Waterman.
\newblock Distributional regimes for the number of k-word matches between two
  random sequences.
\newblock {\em Proceedings of the National Academy of Sciences},
  99(22):13980--13989, October 2002.
\newblock Publisher: National Academy of Sciences Section: Physical Sciences.

\bibitem[LKP{\etalchar{+}}18]{lees2018evaluation}
John~A. Lees, Michelle Kendall, Julian Parkhill, Caroline Colijn, Stephen~D.
  Bentley, and Simon~R. Harris.
\newblock Evaluation of phylogenetic reconstruction methods using bacterial
  whole genomes: a simulation based study.
\newblock {\em Wellcome open research}, 3, 2018.

\bibitem[LPW06]{LevinPeresWilmer2006}
David~A. Levin, Yuval Peres, and Elizabeth~L. Wilmer.
\newblock {\em {Markov chains and mixing times}}.
\newblock American Mathematical Society, 2006.

\bibitem[Mos04]{mossel_phase_2004}
Elchanan Mossel.
\newblock Phase transitions in phylogeny.
\newblock {\em Transactions of the American Mathematical Society},
  356(6):2379--2404, 2004.

\bibitem[Ney71]{Neyman:71}
J.~Neyman.
\newblock Molecular studies of evolution: a source of novel statistical
  problems.
\newblock In S.~S. Gupta and J.~Yackel, editors, {\em Statistical desicion
  theory and related topics}, pages 1--27. Academic Press, New York, 1971.

\bibitem[OTM{\etalchar{+}}16]{ondov2016mash}
Brian~D. Ondov, Todd~J. Treangen, P{\'a}ll Melsted, Adam~B. Mallonee,
  Nicholas~H Bergman, Sergey Koren, and Adam~M. Phillippy.
\newblock Mash: fast genome and metagenome distance estimation using minhash.
\newblock {\em Genome biology}, 17(1):132, 2016.

\bibitem[PPP{\etalchar{+}}06]{price_principal_2006}
Alkes~L. Price, Nick~J. Patterson, Robert~M. Plenge, Michael~E. Weinblatt,
  Nancy~A. Shadick, and David Reich.
\newblock Principal components analysis corrects for stratification in
  genome-wide association studies.
\newblock {\em Nature Genetics}, 38(8):904--909, August 2006.
\newblock Number: 8 Publisher: Nature Publishing Group.

\bibitem[PPR06]{patterson_population_2006}
Nick Patterson, Alkes~L. Price, and David Reich.
\newblock Population {Structure} and {Eigenanalysis}.
\newblock {\em PLOS Genetics}, 2(12):e190, December 2006.
\newblock Publisher: Public Library of Science.

\bibitem[QWH04]{qi2004whole}
Ji~Qi, Bin Wang, and Bai-Iin Hao.
\newblock Whole proteome prokaryote phylogeny without sequence alignment: a
  k-string composition approach.
\newblock {\em Journal of molecular evolution}, 58(1):1--11, 2004.

\bibitem[RCSW09]{reinert_alignment-free_2009}
Gesine Reinert, David Chew, Fengzhu Sun, and Michael~S. Waterman.
\newblock Alignment-{Free} {Sequence} {Comparison} ({I}): {Statistics} and
  {Power}.
\newblock {\em Journal of Computational Biology}, 16(12):1615--1634, December
  2009.
\newblock Publisher: Mary Ann Liebert, Inc., publishers.

\bibitem[RS17]{roch_phase_2017}
Sebastien Roch and Allan Sly.
\newblock Phase transition in the sample complexity of likelihood-based
  phylogeny inference.
\newblock {\em Probability Theory and Related Fields}, 169(1):3--62, October
  2017.

\bibitem[RWB15]{roychoudhury_consistency_2015}
Arindam RoyChoudhury, Amy Willis, and John Bunge.
\newblock Consistency of a phylogenetic tree maximum likelihood estimator.
\newblock {\em Journal of Statistical Planning and Inference}, 161:73--80, June
  2015.

\bibitem[Ste94]{steel_recovering_1994}
Mike Steel.
\newblock Recovering a tree from the leaf colourations it generates under a
  {Markov} model.
\newblock {\em Applied Mathematics Letters}, 7(2):19--23, March 1994.

\bibitem[Ste16]{Steel:16}
Mike Steel.
\newblock {\em Phylogeny---discrete and random processes in evolution},
  volume~89 of {\em CBMS-NSF Regional Conference Series in Applied
  Mathematics}.
\newblock Society for Industrial and Applied Mathematics (SIAM), Philadelphia,
  PA, 2016.

\bibitem[Tav84]{tavare_line--descent_1984}
Simon Tavaré.
\newblock Line-of-descent and genealogical processes, and their applications in
  population genetics models.
\newblock {\em Theoretical Population Biology}, 26(2):119--164, October 1984.

\bibitem[TKF91]{Thorne1991}
Jeffrey~L. Thorne, Hirohisa Kishino, and Joseph Felsenstein.
\newblock An evolutionary model for maximum likelihood alignment of dna
  sequences.
\newblock {\em Journal of Molecular Evolution}, 33(2):114--124, Aug 1991.

\bibitem[UBTC06]{ulitsky2006average}
Igor Ulitsky, David Burstein, Tamir Tuller, and Benny Chor.
\newblock The average common substring approach to phylogenomic reconstruction.
\newblock {\em Journal of Computational Biology}, 13(2):336--350, 2006.

\bibitem[VA03]{vinga_alignment-free_2003}
Susana Vinga and Jonas Almeida.
\newblock Alignment-free sequence comparison—a review.
\newblock {\em Bioinformatics}, 19(4):513--523, March 2003.
\newblock Publisher: Oxford Academic.

\bibitem[War17]{Warnow:17}
Tandy Warnow.
\newblock {\em Computational Phylogenetics: An Introduction to Designing
  Methods for Phylogeny Estimation}.
\newblock Cambridge University Press, USA, 1st edition, 2017.

\bibitem[WRSW10]{wan_alignment-free_2010}
Lin Wan, Gesine Reinert, Fengzhu Sun, and Michael~S. Waterman.
\newblock Alignment-{Free} {Sequence} {Comparison} ({II}): {Theoretical}
  {Power} of {Comparison} {Statistics}.
\newblock {\em Journal of Computational Biology}, 17(11):1467--1490, October
  2010.
\newblock Publisher: Mary Ann Liebert, Inc., publishers.

\end{thebibliography}

\appendix

\section{Information-theoretic bounds}
\label{sec:app-information}


In this section we give details about some basic facts we used in the paper. Recall  the definition of the total variation distance in \eqref{Def_TV}.
It is well known, see e.g., \cite{LevinPeresWilmer2006}, that the supremum on the right hand side of \eqref{Def_TV} is reached at the set $B=\{x\in E:\,\nu_1(x)\geq \nu_2(x)\}$ as well as its complement $B^c$, and that we have the following characterizations.
\begin{lemma}
\label{lem:tv-pointwise}
Let $\nu_1$ and $\nu_2$ be probability measures on a countable space $E$.
\begin{align*}
\|\nu_1 - \nu_2\|_\tv
=\frac{1}{2}\sum_{\sigma \in E}
\left|\nu_1(\sigma)-\nu_2(\sigma)\right| 
= 1 - \sum_{\sigma \in E}\nu_1(\sigma)\land\nu_2(\sigma). \end{align*}
\end{lemma}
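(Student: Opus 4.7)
The plan is to establish the two equalities by exploiting the natural partition of $E$ induced by the sign of $\nu_1(x)-\nu_2(x)$. Set $B=\{x\in E:\nu_1(x)\geq \nu_2(x)\}$. The first observation is that, for any $A\subseteq E$, writing $\nu_1(A)-\nu_2(A)=\sum_{x\in A}(\nu_1(x)-\nu_2(x))$ shows this quantity is maximized (among all choices of $A$) by including precisely those $x$ for which the summand is nonnegative, namely $A=B$, and minimized by $A=B^c$. Consequently,
\[
\sup_{A\subseteq E}|\nu_1(A)-\nu_2(A)| = \max\bigl\{\nu_1(B)-\nu_2(B),\;\nu_2(B^c)-\nu_1(B^c)\bigr\}.
\]
Moreover since $\nu_1(B)+\nu_1(B^c)=\nu_2(B)+\nu_2(B^c)=1$, the two quantities in the maximum are equal. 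This identifies $\|\nu_1-\nu_2\|_\tv$ with the common value $\nu_1(B)-\nu_2(B)$.

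For the first equality, I would split the sum according to the partition $E=B\sqcup B^c$:
\[
\sum_{x\in E}|\nu_1(x)-\nu_2(x)|=\sum_{x\in B}(\nu_1(x)-\nu_2(x))+\sum_{x\in B^c}(\nu_2(x)-\nu_1(x))
=(\nu_1(B)-\nu_2(B))+(\nu_2(B^c)-\nu_1(B^c)).
\]
By the equality of the two differences noted above, this sum equals $2(\nu_1(B)-\nu_2(B))=2\|\nu_1-\nu_2\|_\tv$, yielding the first equality.

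For the second equality, I would use the pointwise identity $a\wedge b=\tfrac{1}{2}(a+b-|a-b|)$, valid for any reals $a,b$. Summing over $x\in E$ and using $\sum_x\nu_i(x)=1$ for $i=1,2$ gives
\[
\sum_{x\in E}\nu_1(x)\wedge \nu_2(x) = \frac{1}{2}\Bigl(2-\sum_{x\in E}|\nu_1(x)-\nu_2(x)|\Bigr)=1-\|\nu_1-\nu_2\|_\tv,
\]
where the final step invokes the already-proven first equality. Rearranging produces the second equality.

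There is no substantive obstacle here; the proof is a standard computation once the maximizing set $B$ is identified, and the main subtlety to be careful about is justifying the interchange of supremum and summation (immediate since $\nu_1(A)-\nu_2(A)$ decomposes additively over singletons) and the use of countable sums (both $\nu_1$ and $\nu_2$ are probability measures, so all series converge absolutely).
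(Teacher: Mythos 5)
Your proof is correct and is exactly the standard argument the paper alludes to: the paper omits the proof, citing Levin--Peres--Wilmer and noting only that the supremum is attained at $B=\{x:\nu_1(x)\geq\nu_2(x)\}$ and at $B^c$, which is precisely the decomposition you use. Both equalities and the identity $a\wedge b=\tfrac12(a+b-|a-b|)$ are handled correctly, so there is nothing to add.
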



Let $X$ be a measurable function on a measure space $(\Omega,\,\mathcal{F})$, and $\P$ and $\P'$ be two probability measures on  $(\Omega,\,\mathcal{F})$. Denote by $\P_{g(X)}$ and $\P'_{g(X)}$ the probability distribution of $g(X)$ under $\P$ and $\P'$ respectively, where $g$ is an arbitrary measurable function on the state space of $X$.
\begin{lemma}
\label{lem:tv-f}
Let $g$ be a measurable map on the state space of $X$. Then 
$$
\|\P_{g(X)} - \P'_{g(X)}\|_\tv
\leq \|\P_{X} - \P'_{X}\|_\tv.
$$
\end{lemma}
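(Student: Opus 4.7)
The plan is to proceed directly from the definition \eqref{Def_TV} of total variation distance as a supremum over measurable sets. Since $\P_{g(X)}$ and $\P'_{g(X)}$ are probability measures on the state space $E_g$ of $g(X)$, we can write
\[
\|\P_{g(X)} - \P'_{g(X)}\|_\tv
= \sup_{A \subseteq E_g} \left| \P_{g(X)}(A) - \P'_{g(X)}(A) \right|,
\]
where the sup is over measurable $A$. The key observation is that for any such $A$, pushing back through $g$ gives
\[
\P_{g(X)}(A) - \P'_{g(X)}(A)
= \P(X \in g^{-1}(A)) - \P'(X \in g^{-1}(A))
= \P_X(g^{-1}(A)) - \P'_X(g^{-1}(A)).
\]
Since $g$ is measurable, $g^{-1}(A)$ is a measurable subset of the state space of $X$, so the absolute value of this difference is at most $\|\P_X - \P'_X\|_\tv$ by definition. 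Taking the supremum over $A$ on the left yields the claimed inequality.

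There is essentially no obstacle here: the argument is the standard ``data-processing inequality'' for total variation, and it holds in full generality as long as $g$ is measurable so that preimages of measurable sets are measurable. On the countable spaces used throughout the paper one could equivalently use the $L^1$ characterization of $\|\cdot\|_\tv$ from Lemma~\ref{lem:tv-pointwise} and write $|\P_{g(X)}(y) - \P'_{g(X)}(y)| \leq \sum_{x \in g^{-1}(y)} |\P_X(x) - \P'_X(x)|$, then sum over $y$; this gives the same bound but is slightly less clean than the preimage-of-a-set argument above. Either way, the proof is a one-liner and requires no additional machinery.
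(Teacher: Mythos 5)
Your proof is correct and takes essentially the same route as the paper: both apply the supremum definition of total variation, rewrite $\P_{g(X)}(A) = \P_X(g^{-1}(A))$, and conclude by noting the supremum on the left is over a subclass of measurable sets of the state space of $X$. No gaps or differences worth noting.
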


\begin{proof}
Applying the definition \eqref{Def_TV} twice,
\begin{align*}
\|\P_{g(X)} - \P'_{g(X)}\|_\tv =& \sup_{A}|\P(g(X)\in A)-\P'(g(X)\in A) |\\
= & \sup_{A}|\P(X\in g^{-1}(A))-\P'(X\in g^{-1}(A)) |\\ 
& \leq \|\P_{X} - \P'_{X}\|_\tv.
\end{align*}
\end{proof}

Let $X,Y,Z$ be measurable functions on a measure space $(\Omega,\,\mathcal{F})$, and $\P$ and $\P'$ be two probability measures on  $(\Omega,\,\mathcal{F})$. We say that  $X \to Y \to Z$ is a \textit{Markov chain} under $\P$ if $Z$ is conditionally independent of $X$ given $Y$ in the sense that 
\begin{equation}\label{Markov1}
\P_{Z|X,Y} = \P_{Z|Y},    
\end{equation}
where $\P_{Z|X,Y}$ is the conditional distribution of $Z$ given $(X,Y)$ and
$\P_{Z|Y}$ is the conditional distribution of $Z$ given $Y$. The  law of total probability and \eqref{Markov1} imply that
\begin{equation}\label{Markov2}
    \P_{X,Y,Z} = \P_{X}\P_{Y|X}\P_{Z|Y},
\end{equation}
where $\P_{X,Y,Z}$ is the joint probability distribution of $(X,Y,Z)$.

\begin{lemma}
\label{lem:tv-markov}
Suppose  $\P_{X} = \P_{X}'$, $\P_{Y|X} = \P_{Y|X}'$ and $X \to Y \to Z$ is a Markov chain under both $\P$ and $\P'$. Then
$$
\|\P_{X,Y,Z} - \P'_{X, Y, Z}\|_\tv
= \|\P_{Y,Z} - \P'_{Y, Z}\|_\tv.
$$
\end{lemma}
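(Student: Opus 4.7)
The plan is to reduce everything to the pointwise characterization of total variation distance given by Lemma~\ref{lem:tv-pointwise}. The key algebraic ingredient is the Markov factorization~\eqref{Markov2}, which under the hypotheses $\P_X = \P'_X$ and $\P_{Y|X} = \P'_{Y|X}$ lets me write
\[
\P_{X,Y,Z}(x,y,z) - \P'_{X,Y,Z}(x,y,z)
= \P_X(x)\,\P_{Y|X}(y\mid x)\,\bigl[\P_{Z|Y}(z\mid y) - \P'_{Z|Y}(z\mid y)\bigr],
\]
so that the only discrepancy between the two joint distributions is carried by the conditional kernel of $Z$ given $Y$.

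Next I would apply Lemma~\ref{lem:tv-pointwise} to express $\|\P_{X,Y,Z} - \P'_{X,Y,Z}\|_\tv$ as $\tfrac{1}{2}$ times the sum of absolute pointwise differences. Taking the absolute value of the displayed identity, summing over $x$ first, and using
\[
\sum_x \P_X(x)\,\P_{Y|X}(y\mid x) = \P_Y(y) = \P'_Y(y)
\]
(the last equality also being forced by the matching marginal and conditional), I obtain
\[
\|\P_{X,Y,Z} - \P'_{X,Y,Z}\|_\tv
= \tfrac{1}{2}\sum_{y,z}\P_Y(y)\,\bigl|\P_{Z|Y}(z\mid y) - \P'_{Z|Y}(z\mid y)\bigr|.
\]
Applying the same Markov factorization~\eqref{Markov2} to $(Y,Z)$ under $\P$ and $\P'$ individually, the right-hand side is precisely $\|\P_{Y,Z} - \P'_{Y,Z}\|_\tv$ by another use of Lemma~\ref{lem:tv-pointwise}.

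I do not anticipate any substantive obstacle; the content of the lemma is that once the prefix $(X,Y)$ is perfectly coupled, as it must be under these hypotheses, all remaining discrepancy lives in the $Z$-given-$Y$ kernel, which is also what drives $\|\P_{Y,Z} - \P'_{Y,Z}\|_\tv$. As a sanity check, one half of the equality, namely $\|\P_{Y,Z} - \P'_{Y,Z}\|_\tv \leq \|\P_{X,Y,Z} - \P'_{X,Y,Z}\|_\tv$, is immediate from Lemma~\ref{lem:tv-f} applied to the projection $(X,Y,Z)\mapsto(Y,Z)$, and so only the reverse inequality requires the computation above. This decomposition into a ``data-processing'' half and a ``Markov chain'' half is the cleanest presentation, and I would organize the write-up around it.
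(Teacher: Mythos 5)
Your proposal is correct and follows essentially the same route as the paper's proof: both apply the Markov factorization~\eqref{Markov2} together with the hypotheses $\P_X=\P'_X$ and $\P_{Y|X}=\P'_{Y|X}$ to reduce the pointwise difference to the $Z$-given-$Y$ kernel, then marginalize over $x$ and invoke the first equality of Lemma~\ref{lem:tv-pointwise} on both ends. The only cosmetic difference is that you flag the ``easy'' inequality via Lemma~\ref{lem:tv-f} as a sanity check, which the paper omits since the direct computation already yields equality.
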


\begin{proof}
By the first equality in Lemma \ref{lem:tv-pointwise},
\begin{align*}
    &\|\P_{X,Y,Z} - \P'_{X,Y,Z}\|_{\tv}\\
    &= \frac{1}{2}\sum_{(a,b,c)}\big|\P((X,Y,Z)=(a,b,c)) - \P'((X,Y,Z)=(a,b,c))\big|.
\end{align*} 
Applying \eqref{Markov2} to $\P$ and $\P'$, we have 
\begin{align*}
\P((X,Y,Z)=(a,b,c)) & =\P(X =a)\,\P(Y =b|X =a)\,\P(Z =c|Y =b),\\
\P'((X,Y,Z)=(a,b,c)) & =\P'(X =a)\,\P'(Y =b|X =a)\,\P'(Z =c|Y =b).
\end{align*} 

From the assumptions $\P_{X} = \P'_{X}$ and $\P_{Y|X} = \P'_{Y|X}$, it follows that $\P_{X,Y} = \P'_{X,Y}$ and $\P_{Y} = \P_{Y}'$. {\color{b}Using the displayed equations above gives} 
\begin{align*}
    &\left|\P((X,Y,Z)=(a,b,c)) - \P'((X,Y,Z)=(a,b,c)) \right| \\
     &= \,\P(X =a)\,\P(Y =b|X =a)\;\left|\P(Z =c|Y =b) - \P'(Z =c|Y =b)\right| \\
     &= \,\P(X=a, Y =b)\;\left|\P(Z =c|Y =b) - \P'(Z =c|Y =b)\right|.
\end{align*} 
Hence
\begin{align*}
    &\|\P_{X,Y,Z} - \P'_{X,Y,Z}\|_{\tv}\\ 
    &= \frac{1}{2}\sum_{(a,b,c)}\P(X=a, Y =b)\;\big|\P(Z =c|Y =b) - \P'(Z =c|Y =b)\big|\\
 & = \frac{1}{2}\sum_{(b,c)}\P(Y =b)\;\big|\P(Z =c|Y =b) - \P'(Z =c|Y =b)\big|\\  
  & = \frac{1}{2}\sum_{(b,c)}\big|\P(Y =b)\,\P(Z =c|Y =b) - \P'(Y =b)\,\P'(Z =c|Y =b)\big|,
\end{align*} 
where we used $\P(Y =b)=\P'(Y =b)$ in the last equality. {\color{b}The  expression on the last line is equal to $\|\P_{Y,Z} - \P'_{Y,Z}\|_{\tv}$} by the first equality in Lemma \ref{lem:tv-pointwise}, establishing the claim. 
\end{proof}



\end{document}